\documentclass{amsart}

\usepackage{eufrak}

\usepackage{amssymb,amsmath, amscd, mathrsfs, yfonts, bbm, graphics}
\title[On multiplicative conditionally free convolution]
{Multiplicative c-free convolution}

\author{Mihai Popa and Jiun-Chau Wang}
\thanks{The first author was partially supported  by the grant 2-CEx06-11-34 of
the Romanian Government}
\address{Indiana University at Bloomington,
 Department of Mathematics, Rawles Hall,
 831 E 3rd St, Bloomington, IN 47405}
\email{mipopa@indiana.edu}
\address{Indiana University at Bloomington,
 Department of Mathematics, Rawles Hall,
 831 E 3rd St, Bloomington, IN 47405}
\email{jiuwang@indiana.edu}
\DeclareMathAlphabet{\mathpzc}{OT1}{pzc}{m}{it}
\newtheorem{claim}{}[section]
\newtheorem{defn}[claim]{Definition}
\newtheorem{thm}[claim]{Theorem}
\newtheorem{lemma}[claim]{Lemma}
\newtheorem{remark}[claim]{Remark}
\newtheorem{prop}[claim]{Proposition}
\newtheorem{cor}[claim]{Corollary}
\newtheorem{conseq}[claim]{Consequence}

\newcommand{\R}{{}^cR}

\newcommand{\cR}{{}^cR}
\newcommand{\cT}{{}^cT}

\newcommand{\bcnv}{\mathop{{\framebox(4,6){$\star$}}}}
\newcommand{\pbcnv}{\mathop{{\bcnv}}^\vee}

\newcommand{\gA}{\mathfrak{A}}

\newcommand{\lra}{\longrightarrow}

\newcommand{\mK}{\mathcal{K}}

\newcommand{\T}{{}^cT}

\usepackage{amssymb}
\input xy
\xyoption{all}

\begin{document}

\def\Utimes{\cup\kern-.50em\lower-.4ex\hbox{$_\times$}}
\def\utimes{\cup\kern-.86em\lower-.7ex\hbox{$_\times$}\,}
\def\Utime{\cup\!\!\!\!\lower-.6ex\hbox{$_\times$}\,}

 \maketitle
\bibliographystyle{alpha}
\begin{abstract} Using the combinatorics of non-crossing partitions,
we construct a conditionally free analogue of the Voiculescu's
$S$-transform. The result is applied to analytical description of
conditionally free multiplicative convolution and characterization
of infinite divisibility.

 AMS Subject Classification: 46L53; 05A18, 60E07

 Keywords: conditionally free independence, multiplicative
 convolution,\\
  $S$-transform, infinite divisibility, non-crossing and
  non-crossing linked partitions
\end{abstract}
\section{Introduction}

 The paper presents some results in conditionally free (or, shorter,
 following \cite{bls}, c-free) probability. The concept of
 c-freeness was developed in early '90's (see \cite{boca},
 \cite{bs},\cite{bls}) as a particular situation of freeness.
 Namely, if $\gA$ is an algebra and $\varphi,
 \psi:\gA\lra\mathbb{C}$ are two normalized linear functionals, then
 the family $\{\gA_j\}_j$ of subalgebras of $\gA$ is said to be
 c-free if
 \begin{enumerate}
 \item[(i)]$\psi(a_1\cdots a_n)=0$
 \item[(ii)]$\varphi(a_1\cdots a_n)=\varphi(a_1)\cdots\varphi(a_n)$
 \end{enumerate}
 for all
  $a_j\in\gA_{\varepsilon(j)}$ and
  $\varepsilon(j)\in\{1,2\},$
  such that
  $\varepsilon(1)\neq\varepsilon(2)\neq\dots\neq\varepsilon(n)$
  and
  $\psi(a_j)=0$, $1\leq j\leq n$.

 Two important tools in free probability theory are the $R$ and $S$
 transforms. Those are power series with the property that if $X$
 and $Y$ are free random variables, then $R_{X+Y}=R_X+R_Y$ and
 $S_{XY}=S_XS_Y$. While a c-free version of the $R$-transform is
 constructed in \cite{bls} and used in several papers
  (such as \cite{additivecfree}, \cite{mvp}, \cite{krystek})
   for the study of c-free additive convolution of
 measures or of c-freeness with amalgamation, the literature lacked a
 similar development for the multiplicative case.

 The present paper shows the construction of a suitable c-free
 version of the $S$-transform (in fact, as in \cite{dykema}, a more
 natural choice is its inverse, called the $T$-transform), and
 demonstrates some of its applications in limit theorems and
 characterization of infinite divisibility.

 The material is structured in seven sections. The second section
 contains notations and preliminary results used throughout the
 paper, but mostly in the third section. The third section presents
 the construction of the
 ${}^cT$-transform and proves its multiplicative property. The
 argument is based on enumerative combinatorial techniques, in the
 spirit of \cite{ns} - the lack of a Fock space model makes
 difficult an analytical procedure, as in \cite{haagerup}. The forth
 section defines the multiplicative c-free convolution of two
 measures on the unit circle and presents its connections to free
 and boolean multiplicative convolutions.
 The fifth and the sixth sections present applications of the
 multiplicative property of the ${}^cT$-transform for the study of
 limit distributions, respectively for the characterization of
 infinite divisibility. These sections are using more analytical
 techniques, analogue to \cite{berpata}, \cite{LevyHincin} and
 \cite{Mullimit}. The last section - the appendix - is showing an
 explicit combinatorial formula for computing the coefficients of
 the $T$- and $\cT$-transforms. The computations, generalizing a result from
 \cite{dykema}, are demonstrating the use of non-crossing linked
 partitions in free probability.


\section{preliminaries}

\subsection{The $R$, $T$ and $\cR$ transforms}

Let $\gA$ be a complex unital algebra endowed with two linear
functionals, $\psi$ and $\varphi$. For $a_{1},\dots,a_{n}\in\gA$,
the free cumulant $R^{n}(a_{1},\dots,a_{n})$, respectively the
c-free cumulant $\cR^{n}(a_{1},\dots,a_{n})$, are defined by the
recurrences: \begin{eqnarray*}
\psi(a_{1}\cdots a_{n}) & = & \sum_{p=1}^{n}\sum R^{p}(a_{i(1)}\cdots a_{i(p)})\left[\prod_{k=1}^{p-1}\psi(a_{i(k)+1}\cdots a_{i(k+1)-1})\right]\psi(a_{i(p)+1}\cdots a_{n})\\
\varphi(a_{1}\cdots a_{n}) & = &
\sum_{p=1}^{n}\sum\cR^{p}(a_{i(1)}\cdots
a_{i(p)})\left[\prod_{k=1}^{p-1}\psi(a_{i(k)+1}\cdots
a_{i(k+1)-1})\right]\varphi(a_{i(p)+1}\cdots a_{n})\end{eqnarray*}
 where in both lines the second summation is done over all $1=i(0)<i(1)<\dots<i(k)\leq n$.

For $X\in\gA$, we will write $R_{X}^{n}$ and $\cR_{X}^{n}$ for
$R^{n}(X,\dots,X)$, respectively $\cR^{n}(X,\dots,X)$. The $R$-,
respectively $\cR$-transform of $X$ are the formal power series
\[
{}\hspace{-1cm}R_{X}(z)=\sum_{n=1}^{\infty}r_{X}^{n}z^{n},\
\hspace{1cm}\cR_{X}(z)=\sum_{n=1}^{\infty}\cR_{X}^{n}z^{n}\]

Let $m_{X}(z)$, respectively $M_{X}(z)$ be the moment-generating
series of $X$ with respect to $\psi$ and $\varphi$, i. e.
${\displaystyle {m_{X}(z)=\sum_{n=1}^{n}\psi(X^{n})z^{n}}}$ and
${\displaystyle {M_{X}(z)=\sum_{n=1}^{n}\varphi(X^{n})z^{n}}}$. As
shown in \cite{ns} and \cite{bls}, the definitions of $R^{n}$ and
$\R^{n}$ give \begin{equation}
R\left(z[1+m_{X}(z)]\right)=m_{X}(z)\label{defRx}\end{equation}
 \begin{equation}
\R\left(z[1+m_{X}(z)]\right)\left(1+M_{X}(z)\right)=M_{X}(z)(1+m_{X}(z))\label{defcRx}\end{equation}

Two elements, $X$ and $Y$, from $\gA$ are said to be conditionally
free ( c-free) in $(\gA,\varphi,\psi)$ if the subalgebras generated
by them in $\gA$ are conditionally free, as defined in Introduction.
The key properties of the $R$ and $\cR$ transforms are summarized in
the following result:

\begin{lemma}\label{cumulantvanish} Let $X,Y$ be c-free in $\cR^{n}(X,\dots,X)$.

\begin{enumerate}
\item [(i)] Let $a_{k}\in\{ X,Y\}$, $1\leq k\leq n$. If there exist
$k\neq l$ such that $a_{k}\neq a_{l}$, then
$R^{n}(a_{1},\dots,a_{n})=0$ and $\cR^{n}(a_{1},\dots,a_{n})=0$
\item [(ii)] Let $T_{X}$ be the formal power series defined by ${\displaystyle {T_{X}(z)=\left(\frac{1}{z}R(z)\right)\circ\left(R^{\langle-1\rangle}(z)\right)}}$,
where $(F(z))^{\langle-1\rangle}$ is the substitutional inverse of
the formal series $F(z)$. Then \[ T_{XY}(z)=T_{X}(z)T_{Y}(z).\]

\end{enumerate}
\end{lemma}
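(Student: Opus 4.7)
The plan is to establish part (i) by induction on $n$ and then derive part (ii) by a non-crossing-partition computation that uses (i) to collapse mixed cumulants down to monochromatic ones.

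For part (i) I handle $R^n$ first and then bootstrap to $\cR^n$. The $R$-cumulant statement is the Speicher--Nica vanishing-of-mixed-cumulants characterization of freeness: condition (i) in the definition of c-freeness says exactly that $X$ and $Y$ are free with respect to $\psi$, which by \cite{ns} is equivalent to the vanishing of mixed free cumulants. I would recall the induction: center each $a_k$ by $\psi(a_k)$, expand $\psi(a_1\cdots a_n)$ through the factorization on alternating products of $\psi$-centered letters, match the expansion against the recurrence defining $R^p$, and use the induction hypothesis to eliminate every subordinate term. For $\cR^n$ I run an analogous induction that now uses condition (ii): expand $\varphi(a_1\cdots a_n)$ via the factorization of $\varphi$ on alternating $\psi$-centered products, plug into the $\cR$-recurrence, and cancel lower-order cumulants through both the $R$- and $\cR$-halves of the induction hypothesis; after all cancellations the sole remaining term is $\cR^n(a_1,\ldots,a_n)$, which must therefore vanish.

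For part (ii) the key point is that $T_X$ is the c-free object, constructed from $\cR$, so one cannot reduce the claim to Voiculescu's classical $S$-transform multiplicativity; a genuine two-state computation is required. I would expand $\varphi((XY)^n)$ through the c-free moment-cumulant recurrence and then apply (i) to every cumulant argument that arises. Vanishing of the mixed $\cR$- and $R$-cumulants collapses the sum to one indexed by non-crossing partitions of $[2n]$ whose blocks are monochromatic in the $X/Y$-coloring; by the standard Kreweras pairing such partitions split into an outer monochromatic piece nesting inner monochromatic pieces of the opposite color, with the outer blocks carrying an $\cR$-weight (from the leading $\varphi$-cumulant in the recurrence) and the inner blocks carrying a $\psi$-weight (from the intermediate factors).

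The remaining step is to show that this double sum assembles as $T_X(z)\,T_Y(z)$ evaluated at the right argument. The plan is to pass to generating series via the defining identities~\eqref{defRx} and~\eqref{defcRx}, perform the substitution $z\mapsto R_{XY}^{\inv}(z)$ provided by the definition of $T$, and check that this substitution is exactly what separates the outer-block contributions from the inner-block contributions in the non-crossing expansion. I expect the main obstacle to be formalizing the combinatorial bookkeeping so that the two factors genuinely decouple: one must verify that composition with $R_{XY}^{\inv}$ annihilates the $\psi$-weights of the inner blocks and regroups the outer $\cR$-weights into two independent $\cR$-generated series, one for $X$ and one for $Y$. Once this identity is secured at the formal-power-series level, $T_{XY}=T_XT_Y$ follows.
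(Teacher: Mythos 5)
Your reading of part (ii) contains a genuine error that derails the argument. In the statement, $T_{X}(z)=\left(\frac{1}{z}R(z)\right)\circ\left(R^{\inv}(z)\right)$ is built from the \emph{free} cumulant series $R_{X}$, i.e.\ from the functional $\psi$ alone; it is exactly the reciprocal of Voiculescu's $S$-transform. Since condition (i) in the definition of c-freeness says precisely that the algebras generated by $X$ and $Y$ are free with respect to $\psi$, part (ii) \emph{is} an instance of the classical multiplicativity $S_{XY}=S_{X}S_{Y}$ (Haagerup; Nica--Speicher), restated for $T=1/S$ --- and this is exactly how the paper disposes of the lemma, by citation, with no proof of its own. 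Your explicit claim that one ``cannot reduce the claim to Voiculescu's classical $S$-transform multiplicativity'' is therefore false, and the computation you then set up --- expanding $\varphi\left((XY)^{n}\right)$ through the c-free recurrence and tracking $\cR$-weights on exterior blocks and $R$-weights on interior blocks --- is a sketch of a \emph{different} statement, namely the multiplicativity of the c-free transform $\cT_{X}=\left(\frac{1}{z}\cR_{X}\right)\circ R_{X}^{\inv}$, which is Theorem \ref{mainrez} of the paper, not Lemma \ref{cumulantvanish}(ii). To prove the lemma as stated one expands $\psi\left((XY)^{n}\right)$, not $\varphi\left((XY)^{n}\right)$, and only free cumulants ever appear.

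Even if one reinterprets your part (ii) as an attempt at the $\cT$-statement, the sketch does not close the hard step: you acknowledge that the main obstacle is showing that composition with $R_{XY}^{\inv}$ decouples the exterior contributions into a product of two independent series, and you leave this as something you ``expect'' to work. That is precisely where the paper must do real work: it needs the decomposition of $\cR_{XY}^{n}$ over $NC_{0}(2n)$ (Lemma \ref{decompfree}(b), itself an induction resting on the vanishing of mixed cumulants), the identity of Theorem \ref{th1} expressing $\frac{1}{z}\cR_{XY}$ through the operation $\pbcnv$, and Lemma \ref{lem2} to convert $\frac{1}{\alpha_{1}}\left(R_{X}\pbcnv R_{Y}\right)$ into $R_{X}^{\inv}\circ\left(R_{X}\bcnv R_{Y}\right)=R_{X}^{\inv}\circ R_{XY}$. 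None of these ingredients is supplied in your outline. Part (i) of your proposal is fine in outline: the $R^{n}$ claim is the Nica--Speicher characterization of freeness with respect to $\psi$, and the $\cR^{n}$ claim is the Bo\.{z}ejko--Leinert--Speicher result; the paper handles both by citation as well.
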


Proofs for the first part of (i) can be found, for example, in
\cite{haagerup}, Theorem 2.2, and \cite{ns}, Theorem 16.16. Second
part of (i) is proved in \cite{bls}, Theorem 3.1.

For (ii), in \cite{haagerup}, Corollary 2.4. or Theorem 3.5, and
\cite{ns}, Corollary 18.17, the property is proved for
$S(z)=\left[T(z)\right]^{-1}$ (inverse with respect to
multiplication). As in \cite{dykema}, for the purpose of this paper
it will be simpler to consider $T(z)=\left[S(z)\right]^{-1}$.

\subsection{Boolean independence, the $\eta$- and $B$-transforms}

Let now $\gA$ be a unital algebra and $\varphi:\gA\lra\mathbb{C}$ be
a normalized functional. Two subalgebras $\gA_{1},\gA_{2}$ are said
to be \emph{boolean independent} if \[
\varphi(x_{1}y_{1}x_{2}y_{2}\cdots)=\varphi(x_{1})\varphi(y_{1})\cdots,\]
 for all $x_{k}\in\gA_{1}$ and $y_{k}\in\gA_{2}$. We will say that
$X,Y\in\gA$ are boolean independent if the nonunital algebras
generated in $\gA$ by $X$, respectively $Y$ are boolean independent.

If $\gA=\mathbb{C}\oplus\gA_{0}$ (direct sum of vector spaces), and
$\psi=Id_{\mathbb{C}}\oplus0_{\gA_{0}}$, then conditional freeness
with respect to $(\varphi,\psi)$ is equivalent to boolean
independence with respect to $\varphi$.

For the results in Section 4-6, we need the definitions and results
below (see \cite{booleanmvp} or \cite{ufranz} for proves).

\begin{defn}
\label{defboolean} Let $X\in\gA$ and $M_{X}(z)$ be the moment
generating series of $X$ with respect to $\varphi$, that is
${\displaystyle {M_{X}(z)=\sum_{n=1}^{\infty}\varphi(X^{n})z^{n}}}$.
The $\eta$-, respectively $B$-transforms of $X$ are the formal power
series given by the relations
$\eta_{X}(z)=\frac{M_{X}(z)}{1+M_{X}(z)}$, respectively
$B_{X}(z)=\frac{1}{z}\eta_{X}(z)$.
\end{defn}
\begin{prop}
If $X$ and $Y$ are two boolean independent elements of $\gA$, then:
\begin{enumerate}
\item [(i)]$\eta_{X+Y}(z)=\eta_{X}(z)+\eta_{Y}(z).$
\item [(ii)]$B_{X+Y}(z)=B_{X}(z)+B_{Y}(z)$\ and\
$B_{(1+X)(1+Y)}(z)=B_{(1+X)}(z)\cdot B_{(1+Y)}(z).$
\end{enumerate}
\end{prop}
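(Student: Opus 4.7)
The plan is to prove (i) by direct moment expansion via the boolean factorization, deduce the additive identity in (ii) at once, and tackle the multiplicative one by reducing it to (i) through a substitution identity for $B_{1+X}$. The main tool throughout is $\varphi(x_1 y_1 x_2 y_2\cdots) = \varphi(x_1)\varphi(y_1)\cdots$ together with its extension to non-alternating products, obtained by first grouping maximal same-subalgebra runs into a single element.

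For (i), I would expand $\varphi((X+Y)^n)$ over words in $\{X,Y\}^n$, group each word into maximal monotypic blocks, and apply the boolean factorization
$\varphi(X^{a_1} Y^{b_1} X^{a_2}\cdots) = \varphi(X^{a_1})\varphi(Y^{b_1})\varphi(X^{a_2})\cdots.$
Reorganizing the sum by the block pattern of the word (four families, indexed by whether the first and last block is of type $X$ or $Y$) and collecting each family as a geometric series in $u := M_X(z)$ and $v := M_Y(z)$, one obtains
\begin{equation*}
1 + M_{X+Y}(z) = \frac{(1+M_X(z))(1+M_Y(z))}{1 - M_X(z)M_Y(z)}.
\end{equation*}
Forming $\eta_{X+Y} = M_{X+Y}/(1+M_{X+Y})$ and simplifying gives $u/(1+u) + v/(1+v) = \eta_X + \eta_Y$.

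The first identity in (ii) follows from (i) by dividing by $z$. For the multiplicative identity, I would first derive the substitution formula
\begin{equation*}
B_{1+X}(z) = 1 + B_X\!\left(\tfrac{z}{1-z}\right),
\end{equation*}
by expanding $\varphi((1+X)^n)=\sum_k \binom{n}{k}\varphi(X^k)$ binomially and resumming to get $1+M_{1+X}(z) = (1+M_X(z/(1-z)))/(1-z)$, and then converting to the $\eta$- and $B$-transforms. Setting $P := X + Y + XY$ so that $1+P = (1+X)(1+Y)$ and applying the substitution identity to both sides of the target $B_{(1+X)(1+Y)}(z) = B_{1+X}(z)B_{1+Y}(z)$, the claim reduces, with $w := z/(1-z)$, to
\begin{equation*}
\eta_P(w) = \eta_X(w) + \eta_Y(w) + \frac{\eta_X(w)\,\eta_Y(w)}{w}.
\end{equation*}
This I would prove by expanding $\varphi((X+Y+XY)^n)$ over words in the three-letter alphabet $\{X,Y,XY\}$, reading each letter as a product in $\{X,Y\}$, factoring by the boolean rule as in (i), and resumming geometrically in $u$ and $v$.

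The main obstacle I expect is this last resummation. Unlike in (i), the $\{X,Y\}$-words produced by $(X+Y+XY)^n$ do not range freely over $\{X,Y\}^*$ but are constrained by which positions carried an $XY$-letter, so the geometric series split into several interacting cases that must be parametrized carefully to avoid over- or under-counting. An alternative that sidesteps this bookkeeping is to reformulate both parts in terms of boolean cumulants (the coefficients of $\eta_X$): boolean independence is equivalent to the vanishing of mixed cumulants, and from this both the additive linearization and the multiplicative identity drop out more uniformly. This is essentially the approach of \cite{booleanmvp} and \cite{ufranz}.
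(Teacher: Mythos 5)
The paper states this proposition without proof, referring the reader to \cite{booleanmvp} and \cite{ufranz}, so there is no internal argument to compare yours against; judged on its own terms, your proposal is correct. Every identity you write down explicitly checks out: the run decomposition of $\varphi((X+Y)^n)$ and the four-family geometric resummation do give $1+M_{X+Y}=\frac{(1+M_X)(1+M_Y)}{1-M_XM_Y}$, which is equivalent to $\eta_{X+Y}=\eta_X+\eta_Y$; the binomial computation gives $1+M_{1+X}(z)=\frac{1+M_X(z/(1-z))}{1-z}$ and hence $B_{1+X}(z)=1+B_X\left(\frac{z}{1-z}\right)$; and with $P=X+Y+XY$ the multiplicative claim is exactly equivalent to $\eta_P(w)=\eta_X(w)+\eta_Y(w)+\eta_X(w)\eta_Y(w)/w$, which is a true identity (it says the boolean cumulants of $P$ are those of $X$, plus those of $Y$, plus the linked products, and it checks directly at orders one and two). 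The only step you leave unexecuted is the final resummation over words in $\{X,Y,XY\}$, and your worry about it is legitimate: the same $\{X,Y\}$-word arises from several letter sequences carrying different powers of $z$, so the naive geometric series overcounts. Two clean ways to finish: either expand $\varphi\bigl(((1+X)(1+Y))^n\bigr)$ directly --- each term is $\varphi$ of a subword of $(XY)^n$, whose maximal runs are transparent to parametrize, and one lands on $B_{(1+X)(1+Y)}=B_{1+X}B_{1+Y}$ without ever introducing $P$ --- or pass to boolean cumulants and invoke the vanishing of mixed cumulants, which is precisely the route taken in \cite{booleanmvp}. Either completion is routine, so the argument should be regarded as essentially complete.
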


\subsection{Non-crossing partitions}

By a partition on the ordered set $\langle n\rangle=\{1,2,\dots,n\}$
we will understand a collection of mutually disjoint subsets of
$\langle n\rangle$, $\gamma=(B_{1},\dots,B_{q})$, called
\emph{blocks} whose union is the entire set $\langle n\rangle$. A
\emph{crossing} is a sequence $i<j<k<l$ from $\langle n\rangle$ with
the property that there exist two different blocks $B_{r}$ and
$B_{s}$ such that $i,k\in B_{r}$ and $j,l\in B_{s}$. A partition
that has no crossings will be called non-crossing. The set of all
non-crossing partitions on $\langle n\rangle$ will be denoted by
$NC(n)$.

For $\gamma\in NC(n)$, a block $B=(i_{1},\dots,i_{k})$ of $\gamma$
will be called \emph{interior} if there exists another block
$D\in\gamma$ and $i,j\in D$ such that $i<i_{1},i_{2},\dots,i_{k}<j$.
A block will be called \emph{exterior} if is not interior. The set
of all interior, respectively exterior blocks of $\gamma$ will be
denoted by $Int(\gamma)$, respectively $Ext(\gamma)$.

With the above notations, the recurrences defining $R^{n}$,
respectively $\R^{n}$ can be written as
  \begin{eqnarray*}
\psi(a_{1}\cdots a_{n})
 & = &
 \sum_{\gamma\in NC(n)}\prod_{\substack{B\in\gamma\\
B=(i_{1},\dots,i_{k})}
}
 R^{k}(a_{i_{1}},\dots,a_{i_{k}})\\
\varphi(a_{1}\cdots a_{n})
 & = &
 \sum_{\gamma\in NC(n)}
  \prod_{
  \substack{D\in Ext(\gamma)\\
D=(i_{1},\dots,i_{k})}
 }
 \R^{k}(a_{i_{1}},\dots,a_{i_{k}})\prod_{\substack{B\in Int(\gamma)\\
B=(j_{1},\dots,j_{s})}
 }
 R^{s}(a_{j_{1}},\dots,a_{j_{s}})
 \end{eqnarray*}

$NC(n)$ has a lattice structure with respect to the reversed
refinement order, with the biggest, respectively smallest element
$\mathbbm{1}_{n}=(1,2,\dots,n)$, respectively $0_{n}=(1),\dots,(n)$.
For $\pi,\sigma\in NC(n)$ we will denote by $\pi\bigvee\sigma$ their
join (smallest common upper bound).

The Kreweras complementary
  $Kr(\pi)$ of $\pi\in NC(n)$
  is defined as follows. Consider the symbols
   $\overline{1},\dots,\overline{n}$
   such that
   $1<\overline{1}<2<\dots<n<\overline{n}$.
   Then $Kr(\pi)$ is the
biggest element of
 $NC(\overline{1},\dots,\overline{n})\cong NC(n)$
such that
  \[
  \pi\cup Kr(\pi)\in
NC(1,\overline{1},\dots,n,\overline{n}).
 \]

The following notations will also be used:
 \begin{eqnarray*}
NC_{1}(n) & = & \{\sigma:\sigma\in NC(n),\sigma\
\text{has only one exterior block}\}\\
NC_{2}(n) & = & \{\sigma:\sigma\in NC(n),\sigma\
\text{has only two exterior blocks}\}\\
NC_{S}(n) & = & \{\sigma:\sigma\in NC(2n),\sigma\
\text{the elements from the same block of $\sigma$ have}\\
 &  & \text{the same parity}\}.
 \end{eqnarray*}
 For $\sigma\in NC_{S}(2n)$, denote $\sigma_{+}$, respectively $\sigma_{-}$
the restriction of $\sigma$ to the even, respectively odd, numbers
from $\{1,2,\dots,2n\}$. Define
 \[
  NC_{0}(n)=\{\sigma:\sigma\in
NC(n),\sigma_{+}=Kr(\sigma_{-})\}.
 \]
 Also, we will need to consider the mappings
  \[
NC(n)\ni\sigma\mapsto\widehat{\sigma}\in NC(2n)
 \]
 constructing by doubling the elements, and
   \[
NC(n)\times NC(m)\ni(\pi,\sigma)\mapsto\pi\oplus\sigma\in NC(m+n),
 \]
 the juxtaposition of partitions.


\section{the $\T$-transform}

Let $\gA$ be a unital algebra,
 $\varphi,\psi:\gA\lra\mathbb{C}$
 be two normalized linear functionals and $X$ be n element from $\gA$.
Denote by $m_{X}(z)$, respectively $M_{X}(z)$, the moment generating
power series of $X$ with respect to $\psi$ and $\varphi$, as in the
formulas (\ref{defRx}), (\ref{defcRx}).

In this section we will construct a formal power series
${}^{c}T_{X}(z)$ such that:

\begin{enumerate}
\item [(i)]for $\varphi=\psi$ one has that
 $T_{X}(z)={}^{c}T_{X}(z)$
(see \ref{cumulantvanish} for the definition of $T_{X}(z)$).
\item [(ii)] $M_{X}(z)$ can be obtained from $T_{X}(z)$ and ${}^{c}T_{X}(z)$
via substitutional composition, substitutional inverse and algebraic
operations.
\item [(iii)]if $X$ and $Y$ are c-free elements from $\gA$, then
 \[
{}^{c}T_{XY}(z)={}^{c}T_{X}(z){}^{c}T_{Y}(z).
 \]
\end{enumerate}
\noindent The construction of ${}^{c}T_{X}(z)$ is presented as a
natural consequence of the combinatorial properties of the free and
c-free cumulants $R^{n}$ and $\R^{n}$.

For $a_{1},\dots,a_{n}\in\gA$ we need to consider the multilinear
maps
  \begin{eqnarray*}
\kappa_{\pi}[a_{1},\dots,a_{n}]
 & = &
 \prod_{\substack{B\in\pi\\
B=(i(1),\dots,i(p))\\
\\}
 }
 R^{p}(a_{i(1)}\cdots a_{i(p)})\\
\mK_{\pi}[a_{1},\dots,a_{n}]
 & = &
  \Big[\prod_{\substack{D\in Ext(\pi)\\
D=(i(1),\dots,i(p))}
}\R^{p}(a_{i(1)}\cdots a_{i(p)})\Big]\Big[\prod_{\substack{B\in Int(\pi)\\
B=(i(1),\dots,i(q))\\
\\}
}R^{q}(a_{i(1)}\cdots a_{i(q)})\Big]
 \end{eqnarray*}

First, let us remark the following result on free and c-free
cumulants with constants among entries:

\begin{lemma}
 For all $a_{1},\dots,a_{n}\in\gA$, one has that
  \begin{eqnarray*}
R^{n+1}(a_{1},\dots,a_{j},1,a_{j+1},\dots,a_{n})
 & = & 0\\
\R^{n+1}(a_{1},\dots,a_{j},1,a_{j+1},\dots,a_{n})
 & = &
0
 \end{eqnarray*}

\end{lemma}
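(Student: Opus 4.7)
The plan is to prove both identities simultaneously by induction on $n$, relying only on the moment--cumulant recurrences stated in Section 2 and the normalization $\psi(1)=\varphi(1)=1$. The base case $n=1$ is immediate from the length-$2$ instance of each recurrence: $\psi(a\cdot 1)=\psi(a)=R^{1}(a)\psi(1)$ combined with $\psi(a_{1}a_{2})=R^{2}(a_{1},a_{2})+R^{1}(a_{1})R^{1}(a_{2})$ forces $R^{2}(a,1)=R^{2}(1,a)=0$, and the analogous manipulation with $\varphi$ in place of $\psi$ yields the corresponding vanishing of $\cR^{2}$.

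For the inductive step on $R^{n+1}$, I would insert the unit as the $(j+1)$-st entry and write
$$\psi(a_{1}\cdots a_{j}\cdot 1\cdot a_{j+1}\cdots a_{n})=\sum_{\pi\in NC(n+1)}\kappa_{\pi}[a_{1},\dots,a_{j},1,a_{j+1},\dots,a_{n}],$$
then split the sum according to the block $B_{\ast}$ of $\pi$ containing position $j+1$. Partitions with $B_{\ast}=\{j+1\}$ a singleton contribute $R^{1}(1)=1$ at that block, and since erasing a singleton block is a bijection $NC(n+1)\to NC(n)$ preserving the remaining blocks, the total contribution of these partitions is exactly $\psi(a_{1}\cdots a_{n})$, matching the left-hand side. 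Partitions with $2\leq|B_{\ast}|\leq n$ contain a cumulant factor of strictly smaller arity with a unit entry, which vanishes by the inductive hypothesis. The only surviving term is $\pi=\mathbbm{1}_{n+1}$, producing precisely the cumulant we want to kill; subtraction closes the induction.

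The argument for $\cR^{n+1}$ follows the same template applied to the c-free moment--cumulant expansion $\varphi(a_{1}\cdots a_{n})=\sum_{\pi\in NC(n+1)}\mK_{\pi}[\dots,1,\dots]$. Interior-block factors $R^{k}(\dots,1,\dots)$ with $k\geq 2$ vanish by the $R$-version just proved; exterior-block factors $\cR^{k}(\dots,1,\dots)$ with $k<n+1$ vanish by the $\cR$-induction. A singleton $\{j+1\}$ contributes $1$ whether it is interior or exterior, since $R^{1}(1)=\psi(1)=1$ and $\cR^{1}(1)=\varphi(1)=1$, so the singleton-case partitions collapse to $\varphi(a_{1}\cdots a_{n})$; the unique full-block case then leaves only the cumulant we wish to annihilate.

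I do not expect a serious obstacle. The one fine point to verify is that deleting a singleton $\{j+1\}$ from a partition in $NC(n+1)$ preserves the interior/exterior status of every other block. This is transparent: a one-element block cannot enclose any other block, so whether a block $B$ is interior in $\pi$ depends only on the non-singleton blocks surrounding it, and those correspond exactly to the blocks of the reduced partition in $NC(n)$.
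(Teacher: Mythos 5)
Your proposal is correct and follows essentially the same route as the paper: induction via the moment--cumulant expansion over $NC(n+1)$, with the singleton-erasing bijection recovering $\varphi(a_{1}\cdots a_{n})$ (resp.\ $\psi(a_{1}\cdots a_{n})$) and all remaining non-full partitions killed by the inductive hypothesis together with the already-established $R$-vanishing for interior blocks. The only difference is cosmetic: the paper imports the $R$-half from Nica--Speicher and proves only the $\cR$-half, whereas you run the same induction for both, and you make explicit the (correct) observation that deleting the singleton preserves the interior/exterior status of the other blocks, which the paper leaves tacit.
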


\begin{proof}
The first part is proved in \cite{ns}. We will prove the second part
by induction. First, since $\varphi(a)=\varphi(a1)$, the definition
of $\R^{n}$ gives
 \[
  \R^{2}(a,1)+\R^{1}(a)1=\R^{1}(a)
   \]
 therefore
  $\R^{2}(a,1)=0$.

In general,
 $\varphi(a_{1}\cdots a_{n})
  =
   \varphi(a_{1}\cdots a_{j}1a_{j+1}\cdots a_{n}).$
    But
  \[
   \varphi(a_{1}\cdots a_{n})
   =
   \sum_{\pi\in NC(n)}\mK_{\pi}[a_{1},\dots,a_{n}]
  \]
 while
  \begin{eqnarray*}
\varphi(a_{1}\cdots a_{j}1a_{j+1}\cdots a_{n})
 & = &
  \sum_{\pi\in NC(n+1)}\mK_{\pi}[a_{1},\dots,a_{j},1,a_{j+1,}\dots,a_{n}]\\
 && \hspace{-2cm}=
  \sum_{
   \substack{\pi\in NC(n+1)\\
(1)\in\pi}
 }
  \mK_{\pi}[a_{1},\dots,a_{j},1,a_{j+1,}\dots,a_{n}]
  +R^{n+1}(a_{1},\dots,1,\dots,a_{n})\\
 &  &\hspace{-1.6cm}
  +\sum_{\substack{\pi\in NC(n+1)\\
(1)\notin\pi\\
\pi\neq\mathbbm{1}_{n+1}}
 }
 \mK_{\pi}[a_{1},\dots,a_{j},1,a_{j+1,}\dots,a_{n}]
 \end{eqnarray*}

The last term cancels from the induction hypothesis. Consider the
bijection
 $\pi\mapsto\pi^{\prime}$
   between
   $\{\pi:\pi\in NC(n+1),(j)\in\pi\}$
 and
  $NC(n)$, where $\pi^{\prime}$ is obtained
by erasing the block $(j)$ from $\pi$. It has the property that
 \[
\mK_{\pi}[a_{1},\dots,a_{j},1,a_{j+1,}\dots,a_{n}]
 =\mK_{\pi^{\prime}}[a_{1},\dots,a_{n}]
 \]
 Therefore
  \[
\varphi(a_{1}\cdots a_{j}1a_{j+1}\cdots a_{n})
 =
 \varphi(a_{1}\cdots a_{n})
 +\R^{n+1}[a_{1},\dots,a_{j},1,a_{j+1,}\dots,a_{n}]
 \]
 hence the conclusion.
\end{proof}


We will focus next on free and c-free cumulants with products as
entries.

\begin{lemma}\label{decompfree} Let $X,Y\in\gA$, c-free. For all
$\pi\in NC(n)$ one has that:

\begin{enumerate}
\item [(a)] $\kappa_{\pi}[XY,\dots,XY]=\sum_{\substack{\sigma\in NC_{S}(2n)\\
\sigma\bigvee\widehat{0_{n}}=\widehat{\pi}}
}\kappa_{\sigma}[X,Y,\dots,X,Y].$

\noindent Particularly, \[ R_{XY}^{n}=\sum_{\sigma\in
NC_{0}(2n)}\kappa_{\sigma}[X,Y,\dots,X,Y].\]

\item [(b)] $\mK_{\pi}[XY,\dots,XY]
 =
 \sum_{\substack{\sigma\in NC_{S}(2n)\\
\sigma\bigvee\widehat{0_{n}}
 =\widehat{\pi}}
}\mK_{\sigma}[X,Y,\dots,X,Y].$

\noindent Particularly, \[ \R_{XY}^{n}=\sum_{\sigma\in
NC_{0}(2n)}\mK_{\sigma}[X,Y,\dots,X,Y].\]

\end{enumerate}
\end{lemma}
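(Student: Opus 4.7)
The plan is to derive both (a) and (b) from the moment--cumulant duality together with the vanishing of mixed cumulants in Lemma \ref{cumulantvanish}(i).

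For (a), I would invoke the classical cumulant-of-products identity: for arbitrary elements $X_1,\dots,X_n,Y_1,\dots,Y_n$ of $\gA$,
\[
 \kappa_\pi[X_1Y_1,\dots,X_nY_n]
 =
 \sum_{\substack{\sigma\in NC(2n)\\ \sigma\vee\widehat{0_n}=\widehat\pi}}
 \kappa_\sigma[X_1,Y_1,\dots,X_n,Y_n],
\]
which is proved by M\"obius inversion in $NC$. Specializing to $X_i=X$, $Y_i=Y$ with $X,Y$ c-free, Lemma \ref{cumulantvanish}(i) forces $\kappa_\sigma=0$ whenever some block of $\sigma$ contains both odd and even indices, so the sum reduces to $\sigma\in NC_S(2n)$. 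For the ``particularly'' part set $\pi=\mathbbm{1}_n$; the condition $\sigma\vee\widehat{0_n}=\mathbbm{1}_{2n}$ on $\sigma\in NC_S(2n)$ is equivalent to $\sigma_+=Kr(\sigma_-)$, i.e.\ $\sigma\in NC_0(2n)$, by the standard Kreweras duality between $NC(n)$ and its complement in the doubled picture.

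For (b), I would proceed by induction on $n$. Writing the moment $\varphi((XY)^n)$ in two ways gives
\[
 \sum_{\pi\in NC(n)} \mK_\pi[XY,\dots,XY]
 =\varphi((XY)^n)=
 \sum_{\sigma\in NC_S(2n)} \mK_\sigma[X,Y,\dots,X,Y],
\]
the second equality again using the c-free version of Lemma \ref{cumulantvanish}(i) to discard $\sigma$'s with chromatically mixed blocks. Group the right-hand sum according to $\sigma\vee\widehat{0_n}=\widehat\pi$, and for every $\pi\neq\mathbbm{1}_n$ apply the inductive hypothesis: each block of such a $\pi$ has size $<n$, so the ``particularly'' case of (b) expands each $\R$-factor of $\mK_\pi$ coming from an exterior block, while part (a) expands each $R$-factor coming from an interior block. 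Multiplying these expansions recovers precisely the $\sigma$-terms with $\sigma\vee\widehat{0_n}=\widehat\pi$. Subtracting these identities from the moment expansion cancels all $\pi\neq\mathbbm{1}_n$ contributions, leaving the $\pi=\mathbbm{1}_n$ identity
\[
 \R^n_{XY}=\sum_{\sigma\in NC_0(2n)}\mK_\sigma[X,Y,\dots,X,Y],
\]
which is the ``particularly'' part of (b). The general formula then follows by the same block-wise expansion of $\mK_\pi$.

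The hard part is the combinatorial bijection
\[
 \{\sigma\in NC_S(2n):\sigma\vee\widehat{0_n}=\widehat\pi\}
 \longleftrightarrow
 \prod_{B\in\pi} NC_0(2|B|),
\]
together with the compatibility statement that a block of $\sigma$ is exterior in $NC(2n)$ if and only if it sits inside some $B\in Ext(\pi)$ and is exterior in the corresponding factor $NC_0(2|B|)$. This compatibility is what makes $\mK_\sigma$ factor the right way: blocks of $\sigma$ inside interior blocks of $\pi$ always contribute $R$-factors (matching the $R^{|B|}_{XY}$ expansion of (a)), while blocks inside exterior blocks of $\pi$ split into locally exterior ones producing $\R$-factors and locally interior ones producing $R$-factors (matching the ``particularly'' expansion of $\R^{|D|}_{XY}$ in (b)). Verifying it requires careful tracking of how the doubling $\pi\mapsto\widehat\pi$ preserves nesting between exterior and interior regions.
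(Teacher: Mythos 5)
Your proposal is correct and follows essentially the same route as the paper: part (a) via the Nica--Speicher product formula for cumulants plus vanishing of mixed (c-free) cumulants, and part (b) by induction on $n$, expanding $\varphi((XY)^n)$ in two ways and cancelling the $\pi\neq\mathbbm{1}_{n}$ terms, which are themselves handled by factoring $\mK_{\pi}$ over its blocks and applying the inductive hypothesis together with (a). The only (cosmetic) difference is that the paper organizes the $\pi\neq\mathbbm{1}_{n}$ case as two subcases (juxtaposition of exterior blocks, then peeling off a single exterior block), whereas you expand all block factors at once; both arguments lean on the same block-compatibility of the correspondence $\sigma\mapsto\sigma\vee\widehat{0_{n}}$ that you correctly single out as the point needing verification.
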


\begin{proof}
(a) is shown in \cite{ns}. The proof of (b) will be done by
induction on $n$. For $n=1$, the statement is trivial:
  \[
\R^{1}[XY]=\varphi(XY)=\varphi(X\cdot
Y)=\R^{2}(X,Y)+\R^{1}(X)\R^{1}(Y)=\R^{1}(X)\R^{1}(Y)
 \]
 sice the mixed c-free cumulants vanish (see \ref{cumulantvanish}).

For the induction step, we distinguish two cases:

Case 1: $\pi\neq\mathbbm{1}_{n}$.

If $\pi$ has more than one exterior block, then
$\pi=\pi_{1}\oplus\pi_{2}$ for some $m<n$, $\pi_{1}\in
NC(m),\pi_{2}\in NC(n-m)$. One has that
 \[
\mK_{\pi}[XY,\dots,XY]=\mK_{\pi_{1}}[XY,\dots,XY]\mK_{\pi_{2}}[XY,\dots,XY],
 \]
 and the result follows from the induction hypothesis.

If $\pi$ has exactly one exterior block, then
 \[
\mK_{\pi}[XY,\dots,XY]=\R^{p}[Xy,\dots,XY]\kappa_{\overline{\pi}}[XY,\dots,XY],
 \]
 where $p$ is the length of the exterior block of $\pi$ and $\overline{\pi}$
is the non-crossing partition obtained by erasing the exterior block
of $\pi$. The result follows from \ref{decompfree} and the induction
hypothesis.

Case 2: $\pi=\mathbbm{1}_{n}$.

We need to show that
\begin{equation}
\label{2}
 \R_{XY}^{n}
 =
 \sum_{\sigma\in NC_{0}(2n)}\mK_{\sigma}[X,Y,\dots,X,Y].
 \end{equation}

Before proceeding with the proof, let us take a better look at the
right-hand side of \ref{2}. Since any $\sigma\in NC_{0}(2n)$ has
exactly 2 exterior blocks, one containing 1 and one containing $2n$,
each
 $\mK_{\sigma}[X,Y,\dots,X,Y]$
  will have exactly two factors of
the type $\R^{m}$, namely $\R_{X}^{p}$ and $\R_{Y}^{q}$, where $p,q$
are, respectively, the length of the first and second exterior block
of $\sigma$. Also, $\sigma\in NC_{0}(2n)$ implies that all other
factors of
 $\mK_{\sigma}[X,Y,\dots,X,Y]$
  are of the form $R_{X}^{s}$
or $R_{Y}^{t}$.

One has that
 \begin{eqnarray*}
\varphi\left((XY)^{n}\right) & = & \varphi(XY\cdot XY\cdots XY)\\
 & = &
  \sum_{\pi\in NC(n)}\mK_{\pi}[XY,\dots,XY]\\
 & = &
  \R_{XY}^{n}+\sum_{\substack{\pi\in NC(n)\\
\pi\neq\mathbbm{1}_{n}} }\mK_{\pi}[XY,\dots,XY]
 \end{eqnarray*}

On the other hand,
 \begin{equation}
\varphi\left((XY)^{n}\right)=\varphi(X\cdot Y\cdots X\cdot
Y)=\sum_{\sigma\in
NC(2n)}\mK_{\sigma}[X,Y,\dots,X,Y]\label{4}
 \end{equation}
 Since all the mixed cumulants vanish, \ref{4} becomes:
  \begin{eqnarray*}
\varphi\left((XY)^{n}\right) & = & \sum_{\sigma\in NC_{S}(2n)}\mK_{\sigma}[X,Y,\dots,X,Y]\\
 & = & \sum_{\sigma\in NC_{0}(2n)}\mK_{\sigma}[X,Y,\dots,X,Y]+\sum_{\substack{\sigma\in NC_{S}(2n)\\
\sigma\notin NC_{0}(2n)} }\mK_{\sigma}[X,Y,\dots,X,Y]
  \end{eqnarray*}

But

 $NC_{S}(2n)=\bigcup_{\pi\in NC(n)}\{\sigma:\ \sigma\in
NC_{S}(2n),\sigma\bigvee\widehat{0_{n}}=\widehat{\pi}\}$.
  Also, for
$\sigma\in NC_{s}(2n)$, one has that $\sigma\in NC_{0}(2n)$ if and
only if
 $\sigma\bigvee\widehat{0_{n}}=\mathbbm{1}_{2n}$.
  Therefore:
 \[
NC_{S}(2n)\setminus NC_{0}(2n)=\bigcup_{\substack{\pi\in NC(n)\\
\pi\neq\mathbbm{1}_{n}} }\{\sigma:\ \sigma\in
NC_{S}(2n),\sigma\bigvee\widehat{0_{n}}=\widehat{\pi}\}.
 \]

It follows that
 \begin{eqnarray*}
\sum_{\substack{\sigma\in NC_{S}(2n)\\
\sigma\notin NC_{0}(2n)}
}\mK_{\sigma}[X,Y,\dots,X,Y]
 & = &
  \sum_{\substack{\pi\in NC(n)\\
\pi\neq\mathbbm{1}_{n}}
}\sum_{\substack{\sigma\in NC_{S}(2n)\\
\sigma\bigvee\widehat{0_{n}}=\widehat{\pi}}
}\mK_{\sigma}[X,Y,\dots,X,Y]\\
 & = &
  \sum_{\substack{\pi\in NC(n)\\
\pi\neq\mathbbm{1}_{n}} }\mK[XY,\dots,XY].
 \end{eqnarray*}
 so the proof is now complete.
\end{proof}
 For stating the next result we need first a brief review of the
operation $\bcnv$ (boxed convolution), as defined in \cite{ns}.

Consider the formal power series ${\displaystyle
{f(z)=\sum_{n=1}^{\infty}\alpha_{n}z^{n}}}$. For $\pi\in NC(n)$, we
define \[ Cf_{\pi}(f)=\prod_{B\in\pi}\alpha_{|B|}.\]

If ${\displaystyle {g(z)=\sum_{n=1}^{\infty}\beta_{n}z^{n}}}$ is
another formal power series, we define their boxed convolution
\setlength{\unitlength}{.035cm}
\[
f(z)\bcnv g(z)=\left(f\bcnv
g\right)(z)=\sum_{n=1}^{\infty}\gamma_{n}z^{n}\]
 by: \[
\gamma_{n}=\sum_{n\in NC(n)}Cf_{\pi}(f)\cdot Cf_{Kr(\pi)}(g).\]

We will need the following two results proved in \cite{ns}:

\begin{lemma}\label{lem2} Suppose $\alpha_{1}\neq0$. Then
\[
f^{\langle-1\rangle}\circ\left(f\bcnv
g\right)=\frac{1}{\alpha_{1}}\left(f\pbcnv g\right)
 \]
 where
 \[
\left(f\pbcnv g\right)(z)=\sum_{n=1}^{\infty}\lambda_{n}z^{n}\]
 for \[
\lambda_{n}=\sum_{\substack{\pi\in NC(n)\\
(1)\in\pi} }Cf_{\pi}(f)\cdot Cf_{Kr(\pi)}(g)
 \]
 \end{lemma}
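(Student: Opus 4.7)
The plan is to prove the equivalent identity
\[
f\!\left(\frac{P(z)}{\alpha_{1}}\right)=(f\bcnv g)(z),\qquad P(z):=(f\pbcnv g)(z),
\]
from which the lemma follows by applying $f^{\langle-1\rangle}$ (which exists as a formal power series since $\alpha_{1}\neq 0$) to both sides. I will verify this identity by comparing coefficients of $z^{n}$ for each $n\geq 1$.

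Fix $n$ and organize $\gamma_{n}:=[z^{n}](f\bcnv g)(z)=\sum_{\pi\in NC(n)}Cf_{\pi}(f)\,Cf_{Kr(\pi)}(g)$ according to the block $B_{1}$ of $\pi$ containing~$1$. Writing $B_{1}=\{1=j_{1}<j_{2}<\cdots<j_{k}\}$, this block splits $\{1,\dots,n\}$ into $k$ consecutive intervals $I_{i}=\{j_{i},j_{i}+1,\dots,j_{i+1}-1\}$ (with $j_{k+1}:=n+1$) of sizes $n_{i}:=|I_{i}|$. Non-crossingness of $\pi$ forces the restriction of $\pi$ to $I_{i}$, relabelled to $\{1,\dots,n_{i}\}$, to be a partition $\pi_{i}\in NC(n_{i})$ with $\{1\}\in\pi_{i}$ as a singleton block (inherited from $j_{i}\in B_{1}$), and the correspondence $\pi\leftrightarrow(k;n_{1},\dots,n_{k};\pi_{1},\dots,\pi_{k})$ is bijective.

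The key structural input is that the Kreweras complement respects this splitting:
\[
Kr(\pi)=Kr(\pi_{1})\oplus\cdots\oplus Kr(\pi_{k}),
\]
the juxtaposition taken under the obvious slot-by-slot identification of barred elements. Granting this, the block-weights factor as
\[
Cf_{\pi}(f)=\alpha_{k}\prod_{i=1}^{k}\frac{Cf_{\pi_{i}}(f)}{\alpha_{1}}=\frac{\alpha_{k}}{\alpha_{1}^{k}}\prod_{i=1}^{k}Cf_{\pi_{i}}(f),\qquad Cf_{Kr(\pi)}(g)=\prod_{i=1}^{k}Cf_{Kr(\pi_{i})}(g),
\]
because $B_{1}$ contributes the factor $\alpha_{k}$ and each spurious $\{1\}$-singleton inside $\pi_{i}$ introduces one extra $\alpha_{1}$ that must be cancelled. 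Summing over all admissible data and recognizing the inner sum as $\lambda_{n_{i}}$,
\[
\gamma_{n}=\sum_{k=1}^{n}\frac{\alpha_{k}}{\alpha_{1}^{k}}\sum_{n_{1}+\cdots+n_{k}=n}\prod_{i=1}^{k}\lambda_{n_{i}}=[z^{n}]\sum_{k\geq 1}\frac{\alpha_{k}}{\alpha_{1}^{k}}P(z)^{k}=[z^{n}]\,f\!\left(\frac{P(z)}{\alpha_{1}}\right),
\]
which is the desired equality of coefficients.

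The only non-routine step is the Kreweras decomposition claim. It is intuitive — the outer arcs of $B_{1}$ prevent any block of $Kr(\pi)$ from spanning distinct slots $I_{i}$, and within each slot maximality of $Kr$ reduces to the local Kreweras complement of $\pi_{i}$ — but making it rigorous requires a careful induction on $k$ (or on $n$) directly from the definition of $Kr$, or else a direct appeal to the corresponding statement in \cite{ns}. Everything else is elementary generating-series bookkeeping, so this combinatorial splitting of $Kr$ is where I expect the real work to lie.
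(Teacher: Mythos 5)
Your argument is correct. Note that the paper does not actually prove this lemma --- it is quoted from \cite{ns} --- so there is no in-text proof to compare against; your proposal supplies the standard argument one finds there: decompose $\pi\in NC(n)$ according to the block $B_{1}\ni 1$, observe that the $k$ intervals it cuts out carry partitions with $\{1\}$ as a singleton (which is exactly the defining condition in $\pbcnv$), and match generating functions. The bookkeeping with the factors $\alpha_{k}/\alpha_{1}^{k}$ is right. The one step you flag, $Kr(\pi)=Kr(\pi_{1})\oplus\cdots\oplus Kr(\pi_{k})$, is indeed where the content lies, but it is a true and standard fact and your stated reason is essentially the whole proof: if a block of $Kr(\pi)$ contained $\overline{a}$ and $\overline{b}$ with $a\in I_{i}$, $b\in I_{i'}$, $i<i'$, then $1<\overline{a}<j_{i+1}<\overline{b}$ would be a crossing against $B_{1}$ in $\pi\cup Kr(\pi)$; confinement to the intervals plus maximality of $Kr$ then forces the local complements (the singleton $\{j_{i}\}$ imposing no constraint). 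So modulo writing out that two-line verification, the proof is complete.
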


\begin{lemma}If $X$ and $Y$ are free, then
\[
R_{XY}(z)=R_{X}(z)\bcnv R_{Y}(z).
 \]
 \end{lemma}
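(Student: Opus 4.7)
The plan is to deduce this directly from part (a) of Lemma \ref{decompfree} together with the vanishing of mixed free cumulants. By that lemma,
\[
R_{XY}^{n} = \sum_{\sigma\in NC_{0}(2n)}\kappa_{\sigma}[X,Y,\dots,X,Y].
\]
First I would unpack each summand. By definition, $\sigma\in NC_{0}(2n)$ lies in $NC_{S}(2n)$, so every block of $\sigma$ consists either entirely of odd positions (each labelled $X$) or entirely of even positions (each labelled $Y$); in particular no block is truly mixed, so the vanishing statement of Lemma \ref{cumulantvanish}(i) is vacuously respected and we get
\[
\kappa_{\sigma}[X,Y,\dots,X,Y]=\prod_{B\in\sigma_{-}}R_{X}^{|B|}\cdot\prod_{B\in\sigma_{+}}R_{Y}^{|B|}.
\]

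Next I would use the defining condition $\sigma_{+}=Kr(\sigma_{-})$ of $NC_{0}(2n)$. Identifying the odd positions $1,3,\dots,2n-1$ with $\langle n\rangle$ and the even positions $2,4,\dots,2n$ with the Kreweras complement copy, this condition says that $\sigma$ is completely determined by $\pi:=\sigma_{-}\in NC(n)$, via $\sigma_{+}=Kr(\pi)$. Hence $\sigma\mapsto\sigma_{-}$ is a bijection $NC_{0}(2n)\to NC(n)$, and under it
\[
\kappa_{\sigma}[X,Y,\dots,X,Y]=Cf_{\pi}(R_{X})\cdot Cf_{Kr(\pi)}(R_{Y}).
\]

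Summing over $NC_{0}(2n)$ therefore yields
\[
R_{XY}^{n}=\sum_{\pi\in NC(n)}Cf_{\pi}(R_{X})\cdot Cf_{Kr(\pi)}(R_{Y}),
\]
which is precisely the $n$-th coefficient of $R_{X}\bcnv R_{Y}$ by the definition of boxed convolution recalled just above. This gives the desired equality of formal power series.

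There is essentially no obstacle: the combinatorial heavy lifting was carried out in Lemma \ref{decompfree}(a) and in the definition of $NC_{0}(2n)$. The only thing one must be careful about is the bookkeeping in the bijection $NC_{0}(2n)\leftrightarrow NC(n)$ and the identification of the two complementary families of blocks with the roles of $X$ and $Y$, but this is immediate from how $\sigma_{\pm}$ and $Kr$ were set up in the preliminaries.
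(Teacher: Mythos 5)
Your proof is correct. The paper itself offers no proof of this lemma --- it simply cites Nica--Speicher --- and your derivation is exactly the standard argument that reference uses and that the paper implicitly relies on: starting from the ``particular'' case of Lemma \ref{decompfree}(a), observing that every $\sigma\in NC_{0}(2n)$ has only homogeneous blocks so that $\kappa_{\sigma}[X,Y,\dots,X,Y]=Cf_{\sigma_{-}}(R_{X})\,Cf_{\sigma_{+}}(R_{Y})$, and then using the bijection $\sigma\mapsto\sigma_{-}$ between $NC_{0}(2n)$ and $NC(n)$ (with $\sigma_{+}=Kr(\sigma_{-})$ by definition) to rewrite the sum as $\sum_{\pi\in NC(n)}Cf_{\pi}(R_{X})\,Cf_{Kr(\pi)}(R_{Y})$, the $n$-th coefficient of $R_{X}\bcnv R_{Y}$. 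This is the intended approach and fits the paper's setup without any circularity, since Lemma \ref{decompfree}(a) is established before this statement.
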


Now we can proceed with the main theorems of this section. To ease
the notations, say that
 ${\displaystyle
{R_{X}(z)=\sum_{n=1}^{\infty}\alpha_{n}z^{n}}}$
 and
  ${\displaystyle
{R_{Y}(z)=\sum_{n=1}^{\infty}\beta_{n}z^{n}}}$.

\begin{thm}
\label{th1} If $X,Y$ are c-free, then
 \[
\frac{1}{z}\R_{XY}(z)=\left[\left(\frac{1}{z}\R_{X}\right)\circ\left(\frac{1}{\alpha_{1}}R_{X}\pbcnv
R_{Y}\right)\right]\left[\left(\frac{1}{z}\R_{Y}\right)\circ\left(\frac{1}{\beta_{1}}R_{Y}\pbcnv
R_{X}\right)\right]
 \]

\end{thm}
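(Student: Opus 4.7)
The plan is to expand both sides of the identity as formal power series in $z$ and match coefficients combinatorially. By Lemma~\ref{decompfree}(b),
\[
\R_{XY}^n = \sum_{\sigma\in NC_0(2n)} \mK_\sigma[X,Y,\ldots,X,Y],
\]
and as established in the proof of that lemma, each $\sigma\in NC_0(2n)$ corresponds bijectively to $\pi:=\sigma_-\in NC(n)$ (with $\sigma_+=Kr(\pi)$) and has exactly two exterior blocks: $B_1=B_1(\pi)$, the block of $\pi$ containing $1$, and $B_n^*$, the block of $Kr(\pi)$ containing $\bar n$. All remaining blocks are interior, giving free-cumulant factors. Therefore
\[
\R_{XY}^n = \sum_{\pi\in NC(n)} \R_X^{|B_1|}\,\R_Y^{|B_n^*|}\prod_{B\in\pi\setminus B_1}\alpha_{|B|}\prod_{B\in Kr(\pi)\setminus B_n^*}\beta_{|B|}.
\]

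Now set $w:=\frac{1}{\alpha_1}(R_X\pbcnv R_Y)$ and $v:=\frac{1}{\beta_1}(R_Y\pbcnv R_X)$. Since $\frac{1}{z}\R_X=\sum_{p\ge1}\R_X^p z^{p-1}$, the composition $\frac{1}{z}\R_X\circ w$ expands as $\sum_{p\ge1}\R_X^p w^{p-1}$, and similarly for the $Y$-factor. Hence the right-hand side of the theorem equals $\sum_{p,q\ge1}\R_X^p\R_Y^q\,w^{p-1}v^{q-1}$, and by matching coefficients of $\R_X^p\R_Y^q$ it suffices to prove, for every $n,p,q\ge1$,
\[
\sum_{\substack{\pi\in NC(n)\\ |B_1|=p,\,|B_n^*|=q}}\prod_{B\in\pi\setminus B_1}\alpha_{|B|}\prod_{B\in Kr(\pi)\setminus B_n^*}\beta_{|B|} \;=\; [z^{n-1}]\,w^{p-1}v^{q-1}.
\]

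To establish this identity I would decompose $\pi$ based on $B_1=\{1=j_1<\cdots<j_p\}$: the $p-1$ interior gaps $G_i=\{j_i+1,\dots,j_{i+1}-1\}$ carry NC partitions $\pi_i\in NC(k_i)$, and the outside gap $G_p=\{j_p+1,\dots,n\}$ carries $\pi_p$. The non-crossing condition forces the restriction of $Kr(\pi)$ to each dual interval $\{\bar j_i,\dots,\overline{j_{i+1}-1}\}$ (of size $k_i+1$) to be $Kr(\pi_i)$ with the extra boundary position $\bar j_i$ adjoined to a specific block. Thus each interior $B_1$-gap forms an independent ``cell'' whose $(\alpha,\beta)$-contribution matches a term of $R_X\pbcnv R_Y$: the adjoined boundary position plays precisely the role of the singleton $(1)\in\pi$ from Lemma~\ref{lem2}. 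Dually, the block $B_n^*$ lies in the outside dual interval and divides it into $q-1$ sub-gaps, producing cells that match terms of $R_Y\pbcnv R_X$ with the $\pi$- and $Kr$-sides swapped. A direct count shows the combined $z$-exponents total $n-1$, and the multinomial expansion of $w^{p-1}v^{q-1}$ recovers the desired sum. The main technical obstacle is this gap-by-gap analysis of the Kreweras complement---in particular, identifying the ``adjoined boundary'' in each cell with the singleton condition defining $\pbcnv$---while everything else is routine formal power-series manipulation.
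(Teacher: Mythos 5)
Your proposal is correct and follows essentially the same route as the paper: both start from $\R_{XY}^{n}=\sum_{\sigma\in NC_{0}(2n)}\mK_{\sigma}[X,Y,\dots,X,Y]$, factor each term over the two exterior blocks of $\sigma$ and the gaps they create, and recognize each gap's contribution as a term of $\frac{1}{\alpha_{1}}(R_{X}\pbcnv R_{Y})$ or $\frac{1}{\beta_{1}}(R_{Y}\pbcnv R_{X})$ via the singleton condition $(1)\in\pi$ (the paper writes this as $(1)\oplus\pi_{k}\in NC_{0}$, you as the adjoined boundary position $\bar{\jmath}_{i}$). The only difference is presentational: you parametrize $\sigma$ by $\pi=\sigma_{-}$ with $\sigma_{+}=Kr(\pi)$ and organize the argument as coefficient matching in $z^{n-1}$, while the paper works directly with $\sigma$ and sums the factorizations.
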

\begin{proof}
As shown before,

\begin{equation}
\R_{XY}^{n}=\sum_{\sigma\in
NC_{0}(2n)}\mK_{\sigma}[X,Y,\dots,X,Y].\label{5}\end{equation}

Each $\sigma\in NC_{0}(2n)$ has exactly 2 exterior blocks, one
consisting on
 $1=b_{1}<b_{2}<\dots<b_{p}$
  and the other one consisting on
  $b_{p}+1=d_{1}<d_{2}<\dots<d_{q}=2n$.
  Let us denote
$\pi_{k}$ the restriction of $\sigma$ to
$(b_{k}+1,b_{k}+2,\dots,b_{k+1}-1)$
 and $\omega_{l}$ the restriction
of $\sigma$ to $(d_{l}+1,d_{l}+2,\dots,d_{l+1}-1)$. Then:
  \[
\mK_{\sigma}[X,Y,\dots,X,Y]
 =
 \R_{X}^{p}\cdot
  \left(
  \prod_{k=1}^{p-1}\kappa_{\pi_{k}}[Y,X,\dots,X,Y]
  \right)
  \cdot\R_{Y}^{q}\cdot
   \left(
    \prod_{l=1}^{q-1}\kappa_{\omega_{l}}[X,Y,\dots,X,Y,X]
    \right)
     \]
 Sice $\sigma\in NC_{0}(2n)$, one has that
  $(1)\oplus\pi_{k}\in NC_{0}(b_{k+1}-b_{k}+1)$
and
 $((1)\oplus\omega_{l}\in NC_{0}(d_{l+1}-d_{l}+1)$,
  therefore:
\begin{eqnarray*}
\kappa_{\pi_{k}}[Y,X,\dots,Y]
 & = &
  \frac{1}{\alpha_{1}}\kappa_{(1)\oplus\pi_{k}}[X,Y,\dots,X,Y]\\
 & = &
  \frac{1}{\alpha_{1}}Cf_{((1)\oplus\pi_{k})_{-}}(R_{X})
   \cdot Cf_{Kr(((1)\oplus\pi_{k})_{-})}(R_{Y})
   \end{eqnarray*}
 and
  \begin{eqnarray*}
\kappa_{\omega_{l}}[X,Y,X,\dots,X]
 & = &
  \frac{1}{\beta_{1}}\kappa_{(1)\oplus\omega_{l}}[X,Y,\dots,X,Y]\\
 & = &
  \frac{1}{\beta_{1}}Cf_{((1)\oplus\omega_{l})_{-}}(R_{Y})\cdot
   Cf_{Kr(((1)\oplus\omega_{l})_{-})}(R_{X})
   \end{eqnarray*}
 hence q.e.d..
\end{proof}
\begin{thm}
\label{mainrez}
  If $X,Y$ are c-free such that $psi(X)\neq 0 \neq \psi(Y)$, then
    \[
\left[
\left(\frac{1}{z}\R_{XY}\right)\circ\left(R_{XY}^{\langle-1\rangle}(z)\right)
 \right]
  =
  \left[
  \left(\frac{1}{z}\R_{X}\right)\circ\left(R_{X}^{\langle-1\rangle}(z)\right)
   \right]
  \cdot
  \left[
  \left(\frac{1}{z}\R_{Y}\right)\circ\left(R_{Y}^{\langle-1\rangle}(z)\right)
  \right]
  .\]

\end{thm}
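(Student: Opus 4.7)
The plan is to derive the statement directly from Theorem \ref{th1} by composing with the inverse $R_{XY}^{\langle-1\rangle}(z)$ on the right, and then simplifying the two inner compositions using Lemma \ref{lem2} together with the free analogue $R_{XY} = R_X \bcnv R_Y$.

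The first step is to observe that although $X$ and $Y$ are only assumed c-free, the vanishing of mixed free cumulants from Lemma \ref{cumulantvanish}(i) means that with respect to $\psi$ they behave as if they were free; in particular the identity $R_{XY}(z) = R_X(z) \bcnv R_Y(z)$ established before Theorem \ref{th1} still applies. Combined with Lemma \ref{lem2}, and using $\alpha_1 = R_X^1 = \psi(X) \neq 0$ and $\beta_1 = R_Y^1 = \psi(Y) \neq 0$, this yields
\[
\frac{1}{\alpha_1}\bigl(R_X \pbcnv R_Y\bigr) = R_X^{\langle-1\rangle} \circ R_{XY}, \qquad \frac{1}{\beta_1}\bigl(R_Y \pbcnv R_X\bigr) = R_Y^{\langle-1\rangle} \circ R_{XY}.
\]

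The second step is to substitute $R_{XY}^{\langle-1\rangle}(z)$ into the identity from Theorem \ref{th1}. Composition with $R_{XY}^{\langle-1\rangle}$ distributes over the product on the right-hand side, and the two inner factors collapse: $R_X^{\langle-1\rangle} \circ R_{XY} \circ R_{XY}^{\langle-1\rangle} = R_X^{\langle-1\rangle}$ and likewise for $Y$. What remains is precisely
\[
\left(\frac{1}{z}\R_{XY}\right) \circ R_{XY}^{\langle-1\rangle} = \left[\left(\frac{1}{z}\R_X\right)\circ R_X^{\langle-1\rangle}\right] \cdot \left[\left(\frac{1}{z}\R_Y\right)\circ R_Y^{\langle-1\rangle}\right],
\]
which is the desired identity.

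I do not anticipate a hard step here; the theorem is really a formal rewriting of Theorem \ref{th1} once one recognizes the factors $\frac{1}{\alpha_1}(R_X\pbcnv R_Y)$ and $\frac{1}{\beta_1}(R_Y\pbcnv R_X)$ as $R_X^{\langle-1\rangle}\circ R_{XY}$ and $R_Y^{\langle-1\rangle}\circ R_{XY}$ respectively. The only point requiring a bit of care is the justification that $R_{XY} = R_X\bcnv R_Y$ remains valid under the weaker c-free hypothesis; this is where the vanishing of mixed free cumulants in Lemma \ref{cumulantvanish}(i) is essential. One should also note that the nonvanishing conditions $\psi(X),\psi(Y)\neq 0$ are exactly what is needed for the substitutional inverses $R_X^{\langle-1\rangle}$, $R_Y^{\langle-1\rangle}$, and $R_{XY}^{\langle-1\rangle}$ to exist as formal power series (and to invoke Lemma \ref{lem2}).
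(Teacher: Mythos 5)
Your proposal is correct and follows essentially the same route as the paper: starting from Theorem \ref{th1}, identifying $\frac{1}{\alpha_1}(R_X\pbcnv R_Y)=R_X^{\langle-1\rangle}\circ R_{XY}$ and $\frac{1}{\beta_1}(R_Y\pbcnv R_X)=R_Y^{\langle-1\rangle}\circ R_{XY}$ via Lemma \ref{lem2} and $R_{XY}=R_X\bcnv R_Y$, and then composing on the right with $R_{XY}^{\langle-1\rangle}$. Your added remarks -- that c-freeness gives freeness with respect to $\psi$ (so $R_{XY}=R_X\bcnv R_Y$ applies) and that $\psi(X),\psi(Y)\neq 0$ is what makes the inverses and Lemma \ref{lem2} available -- are correct and make explicit what the paper leaves implicit.
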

\begin{proof}
From \ref{th1}, one has:
\[
\frac{1}{z}\R_{XY}(z)
 =
 \left[
  \left(\frac{1}{z}\R_{X}\right)
   \circ\left(\frac{1}{\alpha_{1}}R_{X}\pbcnv R_{Y}\right)
 \right]
 \left[
 \left(
  \frac{1}{z}\R_{Y}\right)\circ\left(\frac{1}{\beta_{1}}R_{Y}\pbcnv R_{X}
  \right)
  \right]
   \]
 Using \ref{lem2}, the above equality becomes
  \begin{eqnarray*}
\frac{1}{z}\R_{XY}(z)
  & = &
  \left[
   \left(\frac{1}{z}\R_{X}\right)\circ
   \left(R_{X}^{\langle-1\rangle}\circ\left(R_{X}\pbcnv R_{Y}\right)\right)
  \right]
  \left[
  \left(\frac{1}{z}\R_{Y}\right)\circ
  \left(R_{Y}^{\langle-1\rangle}\circ\left(R_{Y}\pbcnv R_{X}\right)\right)
  \right]\\
 & = &
  \left[
   \left(\left(\frac{1}{z}\R_{X}\right)\circ R_{X}^{\langle-1\rangle}\right)
   \circ\left(R_{X}\pbcnv R_{Y}\right)
  \right]
   \left[
   \left(\left(\frac{1}{z}\R_{Y}\right)\circ R_{Y}^{\langle-1\rangle}\right)
   \circ\left(R_{Y}\pbcnv R_{X}\right)
   \right]
 \end{eqnarray*}
 But
  $R_{XY}=r_{X}\bcnv R_{y}$,
  so
  \begin{equation*}
\frac{1}{z}\R_{XY}(z)
 =\left[
  \left(\left(\frac{1}{z}\R_{X}\right)\circ R_{X}^{\langle-1\rangle}\right)
   \circ\left(R_{XY}\right)\right]\left[\left(\left(\frac{1}{z}\R_{Y}\right)\circ
R_{Y}^{\langle-1\rangle}\right)\circ\left(R_{XY}\right)
 \right]
  \end{equation*}

Finally, composing at the right to $R_{XY}^{\langle-1\rangle}$ one
gets the conclusion.
\end{proof}
\begin{cor}
The power series ${}^{c}T_{x}(z)$ defined by \[
{}^{c}T_{X}(z)=\left(\frac{1}{z}\R_{X}(z)\right)\circ\left(R^{\langle-1\rangle}(z)\right)\]
 satisfies the properties (i)-(iii) described in the beginning of
the section.
\end{cor}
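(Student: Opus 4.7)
The plan is to verify each of the three properties (i)--(iii) separately. Properties (i) and (iii) follow immediately from material already established, so the only real content lies in the short computation underlying (ii).

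For (i), when $\varphi=\psi$ the two recurrences defining $R^n$ and $\cR^n$ at the beginning of Section~2.1 coincide term by term; hence $\cR_X(z)=R_X(z)$ as formal power series, and the defining expression for ${}^cT_X$ reduces exactly to the formula for $T_X$ given in Lemma~\ref{cumulantvanish}(ii). For (iii), this is precisely the conclusion of Theorem~\ref{mainrez}.

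For (ii), I would substitute $w=m_X(z)$ into the defining formula ${}^cT_X(w)=\cR_X(R_X^{\langle-1\rangle}(w))/R_X^{\langle-1\rangle}(w)$. Relation~(\ref{defRx}) gives $R_X(z(1+m_X(z)))=m_X(z)$, hence $R_X^{\langle-1\rangle}(m_X(z))=z(1+m_X(z))$. Combining this with (\ref{defcRx}) yields
\[
{}^cT_X(m_X(z))=\frac{\cR_X(z(1+m_X(z)))}{z(1+m_X(z))}=\frac{M_X(z)}{z(1+M_X(z))},
\]
from which $M_X$ is recovered algebraically once $m_X$ is known, namely $M_X=zF/(1-zF)$ with $F(z)={}^cT_X(m_X(z))$. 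A parallel manipulation using only (\ref{defRx}) shows that $T_X$ alone determines $m_X$: from $T_X$ one extracts $R_X^{\langle-1\rangle}(z)=z/T_X(z)$, then $R_X$ by substitutional inversion, and finally $m_X$ through (\ref{defRx}). Chaining the two recoveries expresses $M_X$ in terms of $T_X$ and ${}^cT_X$ using only substitutional composition, substitutional inversion and rational operations.

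There is no serious obstacle in this argument; the only point requiring care is that the compositional inverses above exist, which is guaranteed by the standing hypothesis $\psi(X)\neq 0$ already imposed in Theorem~\ref{mainrez}.
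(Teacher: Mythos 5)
Your proposal is correct and follows the same route as the paper: (iii) is quoted from Theorem \ref{mainrez}, (i) from comparing the two defining formulas when $\varphi=\psi$, and (ii) from combining (\ref{defRx}), (\ref{defcRx}) with the definition of ${}^{c}T_{X}$. The only difference is that you spell out the computation behind (ii) — recovering $R_{X}^{\langle-1\rangle}(z)=z/T_{X}(z)$, hence $m_X$, and then $M_X=zF/(1-zF)$ with $F={}^{c}T_{X}\circ m_X$ — which the paper leaves implicit (and which reappears in its Appendix, Lemma \ref{nclprop}); your identities check out.
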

\begin{proof}
(iii) is proved in Theorem \ref{mainrez}. (i) is an immediate
consequence of comparing the definitions of $T_{X}$ (as in Lemma
\ref{cumulantvanish}) and of ${}^{c}T_{X}$. Finally, (ii) is implied
by (\ref{defRx}), (\ref{defcRx}) and the definition of
${}^{c}T_{X}$.
\end{proof}


\section{Multiplicative conditionally free convolution of measures}

Denote by
 $\mathcal{M}_{\mathbb{T}}$
the family of all Borel
probability measures supported on the unit circle $\mathbb{T}$ and
by
 $\mathcal{M}_{\mathbb{T}}^{\times}$
  the set of all measures
 $\nu\in\mathcal{M}_{\mathbb{T}}$
 such that
 $\int_{\mathbb{T}}\zeta\, d\nu(\zeta)\neq0$.

For $\mu\in\mathcal{M}_{\mathbb{T}}$ we denote
 \[
  {\displaystyle {m_{\mu}(z)
   =
   \int_{\mathbb{T}}\frac{z\zeta}{1-z\zeta}d\mu(\zeta)}}
  \]
 the moment generating function of $\mu$, analytic in the unit disk
$\mathbb{D}$.

Consider now
 $\mu\in\mathcal{M}_{\mathbb{T}}$
   and
$\nu\in\mathcal{M}_{\mathbb{T}}^{\times}.$
 To the pair $(\mu,\nu)$
we associate the functions
 $R_{\nu}(z)$, $\R_{\mu,\nu)}(z)$,
 $T_{\nu}(z)$, ${}^{c}T_{(\mu,\nu)}(z),$
 analytic in some
neighborhood of zero, and given by the following relations:
\begin{eqnarray*}
R_{\nu}\left(z[1+m_{\nu}(z)]\right)
 & = &
  m_{\nu}(z)\\
\R_{(\mu,\nu)}\left(z[1+m_{\nu}(z)\right)\cdot(1+m_{\mu}(z))
 & = &
   m_{\mu}(z)\cdot(1+m_{\nu}(z))\\
T_{\nu}(z)
 & = &
  \left(
   \frac{1}{z}R_{\nu}(z)\right)\circ\left(R_{\nu}^{\langle-1\rangle}(z)
    \right)\\
{}^{c}T_{(\mu,\nu)}(z)
 & = &
 \left(
 \frac{1}{z}\R_{\mu,\nu)}(z)\right)\circ\left(R_{\nu}^{\langle-1\rangle}(z)
  \right)
  \end{eqnarray*}

\begin{defn}
If
$(\mu_{1},\nu_{1}),(\mu_{2},\nu_{2})
 \in
 \mathcal{M}_{\mathbb{T}}\times\mathcal{M}_{\mathbb{T}}^{\times}$,
their multiplicative conditionally free convolution,
$(\mu_{1},\nu_{1})\boxtimes(\mu_{2},\nu_{2})$
 is the unique pair
$(\mu,\nu)\in\mathcal{M}_{\mathbb{T}}\times\mathcal{M}_{\mathbb{T}}^{\times}$
such that
$T_{\nu}(z)=T_{\nu_{1}}(z)\cdot T_{\nu_{2}}(z)$\
 and\
${}^{c}T_{(\mu,\nu)}(z)
 =
  {}^{c}T_{(\mu_{1},\nu_{1})}(z)\cdot{}^{c}T_{(\mu_{2},\nu_{2})}(z).$
\end{defn}
Particularly, for $X,Y$ c-free elements of $\gA$, if
$(\mu_{1},\nu_{1}),(\mu_{2},\nu_{2})$
are the distributions of $X$, respectively $Y$, then
 $(\mu_{1},\nu_{1})\boxtimes(\mu_{2},\nu_{2})$
is the distribution of
$XY$.

The next two sections address some analytical properties of the
operation $\boxtimes$. More precisely, Section 5 states some limit
theorems and Section 6 describes $\boxtimes$ infinite divisibility.

It will be convenient to introduce a variation of the function
${}^{c}T_{(\mu,\nu)}$ as follows. Namely, for the measures
$\mu\in\mathcal{M}_{\mathbb{T}}$
 and
$\nu\in\mathcal{M}_{\mathbb{T}}^{\times}$,
 we consider the analytic
function

\begin{equation}
\Sigma_{(\mu,\nu)}(z)
 =
  {}^{c}T_{(\mu,\nu)}\left(\frac{z}{1-z}\right)\label{defsigma}
   \end{equation}
 in a neighborhood of zero.

Similarly to Definition \ref{defboolean}, we denote
 ${\displaystyle
{\eta_{\mu}(z)=\frac{m_{\mu}(z)}{1+m_{\mu}(z)}}}$
 and
${\displaystyle {B_{\mu}(z)=\frac{m_{\mu}(z)}{z(1+m_{\mu}(z))}}}$

\begin{lemma}\label{Bnu} For ${\eta_{\nu}}(z)$ and $B_{\mu}(z)$,
defined above, one has:
\[
\Sigma_{(\mu,\nu)}
 =\left[\frac{1}{z}\eta_{\mu}(z)\right]\circ\left[\eta_{\nu}^{\langle-1\rangle}(z)\right]
  =
   B_{\mu}\left(\eta_{\nu}^{\langle-1\rangle}(z)\right)
   \]
 \end{lemma}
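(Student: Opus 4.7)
The plan is to chase the definitions after the substitution $w := \eta_{\nu}^{\langle-1\rangle}(\xi)$, where $\xi$ denotes the variable of $\Sigma_{(\mu,\nu)}$. Rearranging $\eta_{\nu}(w)=\xi$ yields the two identities $m_{\nu}(w)=\xi/(1-\xi)$ and $1+m_{\nu}(w)=1/(1-\xi)$, which are exactly what is needed to simplify the arguments that will appear inside $R_{\nu}^{\langle-1\rangle}$ and $\R_{(\mu,\nu)}$.

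First I would substitute $z=w$ into the defining relation $R_{\nu}(z[1+m_{\nu}(z)])=m_{\nu}(z)$. The argument on the left becomes $w/(1-\xi)$ and the right side becomes $\xi/(1-\xi)$, so $R_{\nu}^{\langle-1\rangle}\bigl(\xi/(1-\xi)\bigr)=w/(1-\xi)$. Next I would substitute $z=w$ into the defining relation $\R_{(\mu,\nu)}(z[1+m_{\nu}(z)])(1+m_{\mu}(z))=m_{\mu}(z)(1+m_{\nu}(z))$; dividing through by $1+m_{\mu}(w)$ (which is nonzero near the origin since $m_{\mu}(0)=0$) gives $\R_{(\mu,\nu)}\bigl(w/(1-\xi)\bigr)=\eta_{\mu}(w)/(1-\xi)$.

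Finally I would plug both pieces into the formula ${}^{c}T_{(\mu,\nu)}(z)=\R_{(\mu,\nu)}(R_{\nu}^{\langle-1\rangle}(z))/R_{\nu}^{\langle-1\rangle}(z)$ at $z=\xi/(1-\xi)$. The common factor $1/(1-\xi)$ cancels between numerator and denominator, leaving $\Sigma_{(\mu,\nu)}(\xi)=\eta_{\mu}(w)/w=B_{\mu}(w)=B_{\mu}\bigl(\eta_{\nu}^{\langle-1\rangle}(\xi)\bigr)$. The second equality in the lemma is then immediate from the definition $B_{\mu}(z)=\eta_{\mu}(z)/z$. The argument is essentially bookkeeping, with no real obstacle; the only points to note in passing are that $\eta_{\nu}^{\langle-1\rangle}$ exists in a neighborhood of $0$ (because $\nu\in\mathcal{M}_{\mathbb{T}}^{\times}$ forces the linear coefficient of $\eta_{\nu}$ to be nonzero) and that the manipulation $1+m_{\nu}(w)=1/(1-\xi)$ is the algebraic pivot that makes the two substitutions align.
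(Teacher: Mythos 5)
Your proposal is correct and follows essentially the same route as the paper: both derive $R_{\nu}^{\langle-1\rangle}(m_{\nu}(w))=w(1+m_{\nu}(w))$ from the defining relation for $R_{\nu}$, then divide the defining relation for $\R_{(\mu,\nu)}$ by $1+m_{\mu}(w)$ to get ${}^{c}T_{(\mu,\nu)}(m_{\nu}(w))=B_{\mu}(w)$, and finish by inverting; your substitution $w=\eta_{\nu}^{\langle-1\rangle}(\xi)$ is just a reparametrization of the paper's final composition with $m_{\nu}^{\langle-1\rangle}$. No gaps.
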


\begin{proof}
The relation
$R_{\nu}\left(z[1+m_{\nu}(z)]\right)=m_{\nu}(z)$
 (i. e. the
definition of $R_{\nu}$) implies that
 \[
R_{\nu}^{\langle-1\rangle}(z)=(1+z)m_{\nu}^{\langle-1\rangle}(z),
 \]
 hence
  \begin{equation}
R_{\nu}^{\langle-1\rangle}(m_{\nu}(z))=z(1+m_{\nu}(z))\label{eq11}
 \end{equation}

Let us now consider the definition of $\R_{(\mu,\nu)}$,
 \[
\R_{(\mu,\nu)}\left(z[1+m_{\nu}(z)\right)\cdot(1+m_{\mu}(z))=m_{\mu}(z)\cdot(1+m_{\nu}(z))
 \]
 which implies that
 \[
\frac{\R_{(\mu,\nu)}\left(z(1+m_{n}u(z))\right)}{z(1+m_{n}u(z))}=\frac{m_{m}u(z)}{z(1+m_{m}u(z))}.
  \]
 Taking into consideration (\ref{eq11}) it follows that
 \[
\frac{\R_{(\mu,\nu)}
 \left(R_{\nu}^{\langle-1\rangle}(m_{\nu}(z))\right)}{R_{\nu}^{\langle-1\rangle}(m_{\nu}(z))}
  =B_{\mu}(z),
  \]
 that is
  \[
{}^{c}T_{(\mu,\nu)}(m_{\nu}(z))=B_{\mu}(z)
 \]
 and composing at right with $m_{\nu}^{\langle-1\rangle}$ we get
the conclusion.
\end{proof}
Note that
 \[
\Sigma_{(\mu,\nu)}(0)=B_{\mu}(0)=\int_{\mathbb{T}}\zeta\,
d\mu(\zeta)
 \]
 and the function $\Sigma_{(\delta_{\lambda},\nu)}(z)$ is the constant
function $\lambda$.

As observed in \cite{Semigroup}, the function $B_{\mu}$ maps
$\mathbb{D}$ into $\overline{\mathbb{D}}$, and, conversely, any
analytic function
$B:\,\mathbb{D}\rightarrow\overline{\mathbb{D}}$
 is of the form
$B_{\mu}$ for a unique probability measure $\mu$ on $\mathbb{T}$. As
a consequence of this observation, the function $\Sigma_{(\mu,\nu)}$
is uniformly bounded by $1$.
 Moreover, if
$\nu\in\mathcal{M}_{\mathbb{T}}^{\times}$
 and $\Sigma$ is an
analytic function defined on the set
  $\eta_{\nu}(\mathbb{D})$ such
that the function $\Sigma$ is uniformly bounded by $1$, then there
exists a unique probability measure $\mu$ on $\mathbb{T}$ such that
$\Sigma=\Sigma_{(\mu,\nu)}$.

Before starting Section 5, we will finally mention the following
construction. If $\mu,\nu$ are two probability measures on
$\mathbb{T}$, their \emph{boolean convolution} $\mu\Utime\nu$ is the
unique measure on $\mathbb{T}$ given by
 \[
B_{\mu\Utime\nu}(z)=B_{\mu}(z)B_{\nu}(z)
 \]


\section{Limit Theorems}

Let $\{ k_{n}\}_{n=1}^{\infty}$ be a sequence of positive integers.
Consider two \emph{infinitesimal} triangular arrays $\{\mu_{nk}:\,
n\in\mathbb{N},1\leq k\leq k_{n}\}$ and $\{\nu_{nk}:\,
n\in\mathbb{N},1\leq k\leq k_{n}\}$ in
$\mathcal{M}_{\mathbb{T}}^{\times}$.
Here the infinitesimality means
that
\[
  \lim_{n\rightarrow\infty}\max_{1\leq k\leq
k_{n}}\mu_{nk}(\{\zeta\in\mathbb{T}:\,\left|\zeta-1\right|\geq\varepsilon\})=0,
 \]
 for every $\varepsilon>0$. We say that a sequence
 $\{\mu_{n}\}_{n=1}^{\infty}\subset\mathcal{M}_{\mathbb{T}}$
converges \emph{weakly} to a measure
$\mu\in\mathcal{M}_{\mathbb{T}}$
 if
 \[
\lim_{n\rightarrow\infty}\int_{\mathbb{T}}f(\zeta)\,
d\mu_{n}(\zeta)=\int_{\mathbb{T}}f(\zeta)\, d\mu(\zeta),
 \]
 for every
continuous function $f$ on $\mathbb{T}$.
The weak convergence of a sequence of pairs
$\{(\mu_{n},\nu_{n})\}_{n=1}^{\infty}$
simply means the componentwise weak convergence. Let
$\{\lambda_{n}\}_{n=1}^{\infty}$
 and
$\{\lambda_{n}^{\prime}\}_{n=1}^{\infty}$
 be two sequences in
$\mathbb{T}$. The aim of current section is to study the limiting
behavior of the sequence of pairs
$\{(\mu_{n},\nu_{n})\}_{n=1}^{\infty}$, where
\[
(\mu_{n},\nu_{n})
 =
 (\delta_{\lambda_{n}},\delta_{\lambda_{n}^{\prime}})
 \boxtimes(\mu_{n1},\nu_{n1})\boxtimes(\mu_{n2},\nu_{n2})
 \boxtimes\cdots\boxtimes(\mu_{nk_{n}},\nu_{nk_{n}}),
 \]
 for $\delta_{\lambda}$ the Dirac point mass supported at $\lambda\in\mathbb{T}$.

 We would like to mention at this point that the asymptotic behavior
of boolean convolution $\Utime$ and that of free convolution
$\boxtimes$ has been studied thoroughly in \cite{Boolean}, where the
necessary and sufficient conditions for the weak convergence were
found. These conditions show that the sequence
$\delta_{\lambda_{n}}\utimes\mu_{n1}\utimes\mu_{n2}\utimes\cdots\utimes\mu_{nk_{n}}$
converges weakly if and only if the sequence
$\delta_{\lambda_{n}}\boxtimes\mu_{n1}\boxtimes\mu_{n2}\boxtimes\cdots\boxtimes\mu_{nk_{n}}$
does, provided that the array $\{\mu_{nk}\}_{n,k}$ is infinitesimal.
Moreover, the limit laws are proved to be infinitely divisible (see
\cite{Boolean}), and the boolean and free limits are related in a
quite explicit manner. In the sequel, we will prove the analogous
results for c-free and boolean convolutions.

 To our purposes, we also mention the characterization of infinite
divisibility relative to boolean convolution $\Utime$ and that
relative to free convolution $\boxtimes$. A measure
$\nu\in\mathcal{M}_{\mathbb{T}}$ is \emph{$\Utime$-infinitely
divisible} if, for each $n\in\mathbb{N}$, there exists
$\nu_{n}\in\mathcal{M}_{\mathbb{T}}$ such that
 \[
\nu=\underbrace{\nu_{n}\utimes\nu_{n}\utimes\cdots\utimes\nu_{n}}_{n\:\text{times}}.
 \]
The notion of $\boxtimes$-infinite divisibility for a measure is
defined analogously.

As shown in {[}11], a measure $\nu\in\mathcal{M}_{\mathbb{T}}$ is
$\Utime$-infinitely divisible \emph{} if and only if either $\nu$ is
the Haar measure on $\mathbb{T}$, or the function $B_{\nu}$ can be
expressed as
\[
B_{\nu}(z)=\gamma\exp\left(-\int_{\mathbb{T}}\frac{1+\zeta
z}{1-\zeta z}\, d\sigma(\zeta)\right),\qquad z\in\mathbb{D},
 \]
 where
$\gamma\in\mathbb{T}$ and $\sigma$ is a finite positive Borel
measure on $\mathbb{T}$. In other words, $\nu$ is
$\Utime$-infinitely divisible if and only if either the function
$B_{\nu}(z)=0$ for all $z$ in $\mathbb{D}$, or $0\notin
B_{\nu}(\mathbb{D})$. The notation
 $\nu_{\Utimes}^{\gamma,\sigma}$
will denote the $\Utime$-infinitely divisible measure determined by
$\gamma$ and $\sigma$ via the above formula.

Analogously, a measure
 $\nu\in\mathcal{M}_{\mathbb{T}}^{\times}$ is
$\boxtimes$-infinitely divisible if and only if the function
$\eta_{\nu}^{\langle-1\rangle}$ can be written as
 \[
 \eta_{\nu}^{\langle-1\rangle}(z)
 =
 z\cdot\gamma \exp\left(\int_{\mathbb{T}}\frac{1+\zeta z}{1-\zeta z}\,
d\sigma(\zeta)\right),\qquad z\in\mathbb{D},
 \]
  for some
$\gamma\in\mathbb{T}$ and a finite positive Borel measure $\sigma$
on $\mathbb{T}$. The $\boxtimes$-infinitely divisible measure $\nu$
described above will be denoted by
$\nu_{\boxtimes}^{\gamma,\sigma}$. The Haar measure $m$ is the only
$\boxtimes$-infinitely divisible measure on $\mathbb{T}$ with zero
first moment.

Let us proceed to the proof of the limit theorems for c-free
convolution. We first show that weak convergence of probability
measures can be translated into convergence properties of
corresponding functions $B$ and $\Sigma$.

\begin{prop}
\label{prop5.1} Let
$\{\mu_{n}\}_{n=1}^{\infty}\subset\mathcal{M}_{\mathbb{T}}$ and
$\{\nu_{n}\}_{n=1}^{\infty}\subset\mathcal{M}_{\mathbb{T}}^{\times}$
be two sequences of probability measures, and let $\mu$ be a
probability measure supported on $\mathbb{T}$.
\begin{enumerate}
\item [(i)] The sequence $\mu_{n}$ converges weakly to $\mu$ if and
only if the functions $B_{\mu_{n}}$ converge uniformly on compact
subsets of $\mathbb{D}$ to the function $B_{\mu}$.
\item [(ii)] Suppose that $\{\nu_{n}\}_{n=1}^{\infty}$ converges weakly to
a measure $\nu\in\mathcal{M}_{\mathbb{T}}^{\times}$. Then the
sequence $\mu_{n}$ converges weakly to $\mu$ if and only if there
exists a neighborhood of zero $\mathcal{D}\subset\mathbb{D}$ such
that all functions $\Sigma_{(\mu_{n},\nu_{n})}$ and
$\Sigma_{(\mu,\nu)}$ are defined in $\mathcal{D}$, and the functions
$\Sigma_{(\mu_{n},\nu_{n})}$ converge uniformly on $\mathcal{D}$ to
the function $\Sigma_{(\mu,\nu)}$.
\end{enumerate}
\end{prop}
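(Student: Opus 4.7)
The plan is to reduce everything to the moment-generating function and to the Lemma \ref{Bnu} identity $\Sigma_{(\mu,\nu)}(z)=B_\mu(\eta_\nu^{\langle-1\rangle}(z))$. For part (i), I would first invoke the classical equivalence (Stone--Weierstrass on the compact circle $\mathbb{T}$) between weak convergence $\mu_n\to\mu$ and moment convergence $\int\zeta^k\,d\mu_n\to\int\zeta^k\,d\mu$ for every $k\in\mathbb{Z}$. Since the Taylor coefficients of $m_\mu(z)$ at $0$ are precisely these moments and the sequence $\{m_{\mu_n}\}$ is uniformly bounded on every compact subset of $\mathbb{D}$ by $|z|/(1-|z|)$, Vitali's theorem gives the equivalence with locally uniform convergence of $m_{\mu_n}$ on $\mathbb{D}$. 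To pass between $m_{\mu_n}$ and $B_{\mu_n}(z)=m_{\mu_n}(z)/[z(1+m_{\mu_n}(z))]$, I would use that $1+m_\mu(z)=\int(1-z\zeta)^{-1}d\mu(\zeta)$ has strictly positive real part, hence is bounded below in modulus on any compact $K\subset\mathbb{D}$; this lower bound transfers to $1+m_{\mu_n}$ for $n$ large by locally uniform convergence, making the passage a standard rational manipulation.

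For part (ii), I would treat both directions through the same identity. In the forward direction, assume $\mu_n\to\mu$ and $\nu_n\to\nu$ weakly. Part (i) applied to both sequences yields $B_{\mu_n}\to B_\mu$ and $\eta_{\nu_n}\to \eta_\nu$ uniformly on compact subsets of $\mathbb{D}$. Since $\nu\in\mathcal{M}_{\mathbb{T}}^\times$, one has $\eta_\nu'(0)=\int\zeta\,d\nu\ne 0$, and weak convergence forces $\eta_{\nu_n}'(0)$ to stay bounded away from $0$. I would then pick a closed disk $\overline{V}\subset\mathbb{D}$ on which $\eta_\nu$ is univalent and a smaller open disk $\mathcal{D}$ with $\overline{\mathcal{D}}\subset\eta_\nu(V)$; a Hurwitz/Rouch\'e argument applied to $\eta_{\nu_n}(z)-w$ for $w\in\overline{\mathcal{D}}$ shows that for all $n$ large, $\eta_{\nu_n}$ is univalent on $V$ with $\mathcal{D}\subset\eta_{\nu_n}(V)$, so $\eta_{\nu_n}^{\langle-1\rangle}$ is defined on $\mathcal{D}$ and converges uniformly there to $\eta_\nu^{\langle-1\rangle}$. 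Composing with the locally uniformly convergent $B_{\mu_n}\to B_\mu$ delivers $\Sigma_{(\mu_n,\nu_n)}\to\Sigma_{(\mu,\nu)}$ uniformly on $\mathcal{D}$.

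For the converse, assume $\nu_n\to\nu$ weakly and $\Sigma_{(\mu_n,\nu_n)}\to\Sigma_{(\mu,\nu)}$ uniformly on $\mathcal{D}$. The idea is to invert the composition by writing $B_{\mu_n}(z)=\Sigma_{(\mu_n,\nu_n)}(\eta_{\nu_n}(z))$ on a small disk $W$ around $0$ chosen so that $\eta_{\nu_n}(W)\subset\mathcal{D}$ uniformly in $n$; such a $W$ exists because $\eta_\nu(0)=0$ and $\eta_{\nu_n}\to\eta_\nu$ uniformly near $0$. This yields $B_{\mu_n}\to B_\mu$ uniformly on $W$. Because each $B_{\mu_n}$ maps $\mathbb{D}$ into $\overline{\mathbb{D}}$, the family $\{B_{\mu_n}\}$ is normal on $\mathbb{D}$; any subsequential limit is analytic on $\mathbb{D}$ and coincides with $B_\mu$ on $W$, hence is $B_\mu$ everywhere. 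Thus $B_{\mu_n}\to B_\mu$ uniformly on compact subsets of $\mathbb{D}$, and part (i) yields $\mu_n\to\mu$ weakly.

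The main technical point I anticipate is guaranteeing that the inverse functions $\eta_{\nu_n}^{\langle-1\rangle}$ all exist on one and the same neighborhood $\mathcal{D}$ of $0$. This is exactly where the hypothesis $\nu_n,\nu\in\mathcal{M}_{\mathbb{T}}^\times$ is used essentially: the nonvanishing of the first moment, combined with locally uniform convergence of $\eta_{\nu_n}$ to $\eta_\nu$ coming from part (i), is what allows the Hurwitz-type argument to produce a uniform domain. Everything else reduces to routine composition and normal family arguments.
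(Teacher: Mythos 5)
Your proof is correct and, for part (ii), follows essentially the same route as the paper: both arguments rest on the identity $\Sigma_{(\mu,\nu)}=B_{\mu}\circ\eta_{\nu}^{\langle-1\rangle}$ from Lemma \ref{Bnu}, pass the limit through the composition in each direction, and use Montel's theorem to upgrade convergence near zero to locally uniform convergence on $\mathbb{D}$ before invoking (i). The differences are mainly in what is made explicit. For (i) the paper simply cites \cite{Boolean} and the Poisson-integral identity (\ref{eq12}), whereas you give a self-contained argument via moment convergence, Vitali, and the algebraic passage between $m_{\mu}$ and $B_{\mu}$ (note that the reverse passage is even easier than you suggest, since $m_{\mu}=zB_{\mu}/(1-zB_{\mu})$ and $|zB_{\mu}(z)|\leq|z|$ keeps the denominator away from zero on compacts). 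Conversely, where the paper is more explicit than you: to conclude $B_{\mu_n}\circ\eta_{\nu_n}^{\langle-1\rangle}\to B_{\mu}\circ\eta_{\nu}^{\langle-1\rangle}$ from the separate uniform convergences, one needs equicontinuity of the outer functions; the paper supplies this via the Cauchy estimate $|B_{\mu_n}'|\leq K$ on a disk $\mathcal{D}'$ containing all the sets $\eta_{\nu_n}^{\langle-1\rangle}(\mathcal{D})$, and the analogous bound for $\Sigma_{(\mu_n,\nu_n)}$ (uniformly bounded by $1$) in the converse direction. Your phrase ``composing ... delivers'' glosses over exactly this point, so you should record the uniform derivative bound; it is routine but it is the one estimate the whole argument turns on. Your Rouch\'e/Hurwitz construction of a common domain for the inverses $\eta_{\nu_n}^{\langle-1\rangle}$ fills in a step the paper attributes to the weak convergence of $\{\nu_n\}$ without detail, and is a worthwhile addition.
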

\begin{proof}
The equivalence in (i) has been observed in \cite{Boolean}, and it
is based on the following identity: \begin{equation}
\frac{1+zB_{\mu}(z)}{1-zB_{\mu}(z)}=\int_{\mathbb{T}}\frac{1+\zeta
z}{1-\zeta z}\, d\mu(\zeta),\qquad
z\in\mathbb{D},\label{eq12}\end{equation}
 which says that the Poisson integral of the measure $d\mu(\,\overline{\zeta}\,)$
is determined by the function $B_{\mu}$.

Let us prove now (ii). Assume that $\{\mu_{n}\}_{n=1}^{\infty}$
converges weakly to $\mu$. Then the hypothesis on weak convergence
of $\{\nu_{n}\}_{n=1}^{\infty}$ implies that there exists a
neighborhood of zero $\mathcal{D}\subset\mathbb{D}$ such that the
functions
 $\eta_{\nu_{n}}^{\langle-1\rangle}$ and
$\eta_{\nu}^{\langle-1\rangle}$ are defined in $\mathcal{D}$
 (hence so are the functions $\Sigma_{(\mu_{n},\nu_{n})}$ and
$\Sigma_{(\mu,\nu)}$),
and the sequence
$\eta_{\nu_{n}}^{\langle-1\rangle}$
converges uniformly on
$\mathcal{D}$ to $\eta_{\nu}^{\langle-1\rangle}$.
  It follows that
there exist $N\in\mathbb{N}$ and a disk
$\mathcal{D}^{\prime}\subset\mathbb{D}$ such that
$\eta_{\nu}^{\langle
-1\rangle}(\mathcal{D})\subset\mathcal{D}^{\prime}$ and
$\eta_{\nu_{n}}^{\langle
-1\rangle}(\mathcal{D})\subset\mathcal{D}^{\prime}$ for every $n>N$.
Also, the Cauchy estimate implies that there exists
$K=K(\mathcal{D}^{\prime})>0$ so that the derivatives
$\left|B_{\mu_{n}}^{\prime}(z)\right|\leq K$ for every
$n\in\mathbb{N}$ and $z\in\mathcal{D}^{\prime}$. Therefore, we have
\begin{eqnarray*}
\left|\Sigma_{(\mu_{n},\nu_{n})}(z)-\Sigma_{(\mu,\nu)}(z)\right|
 & \leq &
 \left|B_{\mu_{n}}
 \left(\eta_{\nu_{n}}^{\langle-1\rangle}(z)\right)
 -B_{\mu_{n}}\left(\eta_{\nu}^{\langle-1\rangle}(z)\right)\right|\\
 &  &
  +\left|B_{\mu_{n}}\left(\eta_{\nu}^{\langle-1\rangle}(z)\right)
  -B_{\mu}\left(\eta_{\nu}^{\langle-1\rangle}(z)\right)\right|\\
 & \leq &
 K\left|\eta_{\nu_{n}}^{\langle-1\rangle}(z)
 -\eta_{\nu}^{\langle-1\rangle}(z)\right|
 +\left|B_{\mu_{n}}\left(\eta_{\nu}^{\langle-1\rangle}(z)\right
 )-B_{\mu}\left(\eta_{\nu}^{\langle-1\rangle}(z)\right)\right|,
 \end{eqnarray*}
for every $n>N$ and $z\in\mathcal{D}$. Then (i) implies that the
functions $\Sigma_{(\mu_{n},\nu_{n})}$ converge uniformly on
$\mathcal{D}$ to the function $\Sigma_{(\mu,\nu)}$.

Conversely, suppose that the functions
  $\Sigma_{(\mu_{n},\nu_{n})}$
converge uniformly on $\mathcal{D}$ to the function
$\Sigma_{(\mu,\nu)}$.
 Observe that
$B_{\mu_{n}}(z)=\Sigma_{(\mu_{n},\nu_{n})}\left(\eta_{\nu_{n}}(z)\right)$
for all $z\in\mathbb{D}$.
A similar argument as in the previous
paragraph shows that there exists a positive constant
$K^{\prime}$
such that the estimate
 \[
 \left|B_{\mu_{n}}(z)-B_{\mu}(z)\right|
  \leq
 K^{\prime}\left|\eta_{\nu_{n}}(z)-\eta_{\nu}(z)\right|
  +\left|\Sigma_{(\mu_{n},\nu_{n})}\left(\eta_{\nu}(z)\right)
  -\Sigma_{(\mu,\nu)}\left(\eta_{\nu}(z)\right)\right|
   \]
holds in a neighborhood of zero, for sufficiently large $n$.
Therefore, the weak convergence of $\{\nu_{n}\}_{n=1}^{\infty}$
implies that the sequence $B_{\mu_{n}}(z)$ converges uniformly on a
neighborhood of zero to the function $B_{\mu}(z)$. Moreover, this
convergence is actually uniform on any compact subset of
$\mathbb{D}$ by an easy application of Montel's theorem. We
therefore conclude, from (i), that the sequence $\mu_{n}$ converges
weakly to $\mu$.
\end{proof}
For an infinitesimal array $\{\mu_{nk}\}_{n,k}$, we define the
complex numbers $b_{nk}\in\mathbb{T}$ by
  \begin{equation}
b_{nk}=\exp\left(i\int_{\left|\arg\zeta\right|<1}\arg\zeta\,
d\mu_{nk}(\zeta)\right)\label{eq13}
   \end{equation}
 where $\arg\zeta$ denotes the principal value of the argument of
$\zeta$, and the probability measures $\mu_{nk}^{\circ}$ by \[
d\mu_{nk}^{\circ}(\zeta)=d\mu_{nk}(b_{nk}\zeta).\]
 The array $\{\mu_{nk}^{\circ}\}_{n,k}$ is infinitesimal and
  $\lim_{n\rightarrow\infty}\max_{1\leq k\leq k_{n}}\left|\arg b_{nk}\right|=0.$
We also associate to each measure $\mu_{nk}^{\circ}$ an analytic
function
 \[
  h_{nk}(z)=-i\int_{\mathbb{T}}\Im\zeta\,
d\mu_{nk}^{\circ}(\zeta)+\int_{\mathbb{T}}\frac{1+\zeta z}{1-\zeta
z}(1-\Re\zeta)\, d\mu_{nk}^{\circ}(\zeta),\qquad z\in\mathbb{D},
 \]
 and observe that
  $\Re h_{nk}(z)>0$ for all $z\in\mathbb{D}$
    unless the measure $\mu_{nk}^{\circ}=\delta_{1}$.

\begin{prop}
\label{prop52} Suppose that $\mathcal{D}\subset\mathbb{D}$ is a disk
centered at zero with radius less than $1/4$. Let
$\{\lambda_{n}\}_{n=1}^{\infty}$ be a sequence in $\mathbb{T}$, and
let $\{\mu_{nk}\}_{n,k}$ be an infinitesimal array in
$\mathcal{M}_{\mathbb{T}}^{\times}.$ Then we have:
\begin{enumerate}
\item [(1)] $1-B_{\mu_{nk}^{\circ}}(z)=h_{nk}\left(\overline{b_{nk}}z\right)(1+o(1))$
uniformly in $k$ and $z\in\mathcal{D}$ as $n\rightarrow\infty$.
\item [(2)]There exists a constant $L=L(\mathcal{D})>0$ such that for
every $n$ and $k$ we have \[ \left|h_{nk}(w)-h_{nk}(z)\right|\leq
L\left|h_{nk}(z)\right|\left|w-z\right|,\qquad z,w\in\mathcal{D}.\]

\item [(3)] The sequence of functions
 $\{\exp(i\arg\lambda_{n}
  +i\sum_{k=1}^{k_{n}}
   \arg b_{nk}-\sum_{k=1}^{k_{n}}h_{nk}(z))\}_{n=1}^{\infty}$
converges uniformly on compact subsets of $\mathbb{D}$ if and only
if the sequence of functions
$\{\lambda_{n}\prod_{k=1}^{k_{n}}B_{\mu_{nk}}(z)\}_{n=1}^{\infty}$
does. Moreover, the two sequences have the same limit function.
\end{enumerate}
\end{prop}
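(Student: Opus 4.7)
The plan begins with the algebraic identity
\begin{equation*}
1 - B_{\mu}(z) = \frac{\int_{\mathbb{T}} (1-\zeta)/(1-z\zeta) \, d\mu(\zeta)}{1 + m_{\mu}(z)},
\end{equation*}
obtained directly from $B_\mu(z) = m_\mu(z)/[z(1+m_\mu(z))]$. Infinitesimality of $\{\mu_{nk}^\circ\}$ gives weak convergence to $\delta_1$ uniformly in $k$, so $1 + m_{\mu_{nk}^\circ}(z) \to 1/(1-z)$ uniformly for $z \in \mathcal{D}$. The key observation for (1) is the circle identity $(1-\zeta)^2 = -2\zeta(1-\Re\zeta)$ on $|\zeta|=1$, which follows from $\overline{\zeta} = 1/\zeta$. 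Combined with the decomposition $(1-z)/(1-z\zeta) = 1 - z(1-\zeta)/(1-z\zeta)$, it yields the exact equality
\begin{equation*}
(1-z) \int \frac{1-\zeta}{1-z\zeta} \, d\mu_{nk}^\circ = \int (1-\zeta)\, d\mu_{nk}^\circ + 2z \int \frac{\zeta(1-\Re\zeta)}{1-z\zeta} \, d\mu_{nk}^\circ = h_{nk}(z).
\end{equation*}
Substituting into the formula for $1-B_\mu(z)$ gives $1 - B_{\mu_{nk}^\circ}(z) = h_{nk}(z)(1+o(1))$; then (2) together with $\overline{b_{nk}} \to 1$ uniformly permits swapping $h_{nk}(z)$ for $h_{nk}(\overline{b_{nk}} z)$, completing part (1).

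For (2), the explicit difference $(1+\zeta w)/(1-\zeta w) - (1+\zeta z)/(1-\zeta z) = 2\zeta(w-z)/[(1-\zeta w)(1-\zeta z)]$ gives
\begin{equation*}
h_{nk}(w) - h_{nk}(z) = 2(w-z) \int_{\mathbb{T}} \frac{\zeta(1-\Re\zeta)}{(1-\zeta w)(1-\zeta z)}\, d\mu_{nk}^\circ.
\end{equation*}
For $z,w \in \mathcal{D}$ (radius $<1/4$), the rational factor is bounded by $16/9$. Since $\Re[(1+\zeta z)/(1-\zeta z)] \geq 1/3$ on $\mathcal{D}$, one has $|h_{nk}(z)| \geq \Re h_{nk}(z) \geq \tfrac{1}{3}\int(1-\Re\zeta)\, d\mu_{nk}^\circ$, and (2) holds with $L = 32/3$.

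For (3), the relation $B_{\mu_{nk}}(z) = b_{nk} B_{\mu_{nk}^\circ}(b_{nk} z)$ gives the factorization
\begin{equation*}
\lambda_n \prod_{k=1}^{k_n} B_{\mu_{nk}}(z) = \exp\!\Big(i\arg\lambda_n + i \sum_{k=1}^{k_n} \arg b_{nk}\Big) \prod_{k=1}^{k_n} B_{\mu_{nk}^\circ}(b_{nk} z).
\end{equation*}
Applying (1) at the argument $b_{nk} z$, where $\overline{b_{nk}}\cdot b_{nk} = 1$, yields $1 - B_{\mu_{nk}^\circ}(b_{nk} z) = h_{nk}(z)(1+o(1))$ uniformly in $k$. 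Since $\max_k |h_{nk}(z)| \to 0$ by infinitesimality, the expansion $\log(1-x) = -x + O(x^2)$ gives $\sum_k \log B_{\mu_{nk}^\circ}(b_{nk} z) = -\sum_k h_{nk}(z) + E_n(z)$, with $|E_n(z)|$ dominated by an expression of the form $o(1) \cdot \sum_k |h_{nk}(z)|$ uniformly on $\mathcal{D}$. Convergence of either sequence forces $\sum_k|h_{nk}(z)|$ to be bounded uniformly in $n$ (via Cauchy estimates and Montel's theorem on the analytic families), so $E_n \to 0$, establishing the equivalence and equality of limits; Montel then extends the convergence to all compact subsets of $\mathbb{D}$. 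The main technical obstacle lies in (1): once the circle identity $(1-\zeta)^2 = -2\zeta(1-\Re\zeta)$ is noticed, the two expressions align exactly, and the remaining analysis is routine.
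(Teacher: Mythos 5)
Your part (2) is correct and is essentially the paper's own argument: the paper bounds the difference quotient of $h_{\mu}$ by $2/[(1-|z|)(1-|w|)]$ and lower-bounds $|h_{\mu}(z)|$ by $\Re h_{\mu}(z)\geq M\int_{\mathbb{T}}(1-\Re\zeta)\,d\mu$ via Harnack's inequality; your explicit constants $16/9$ and $1/3$ implement the same two steps. For part (1) the paper simply cites \cite{Boolean}; your derivation --- the identity $1-B_{\mu}(z)=\bigl[\int_{\mathbb{T}}(1-\zeta)/(1-z\zeta)\,d\mu\bigr]/(1+m_{\mu}(z))$, the circle identity $(1-\zeta)^{2}=-2\zeta(1-\Re\zeta)$, and the resulting exact formula $(1-z)\int_{\mathbb{T}}(1-\zeta)/(1-z\zeta)\,d\mu_{nk}^{\circ}=h_{nk}(z)$ --- checks out, and the passage from $h_{nk}(z)$ to $h_{nk}(\overline{b_{nk}}z)$ via (2) and $\max_{k}|\arg b_{nk}|\to0$ is legitimate. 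Up to this point your write-up is a correct and in places more self-contained version of what the paper outsources to \cite{Boolean}.

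The gap is in (3), in the control of the error term $E_{n}$. You assert that convergence of either sequence forces $\sum_{k}|h_{nk}(z)|$ to be bounded uniformly in $n$, ``via Cauchy estimates and Montel's theorem.'' This fails in two ways. First, the limit function may be identically zero --- this case genuinely occurs (it is the Haar-measure regime of Theorem \ref{thm5.7}) --- and then $\sum_{k}\Re h_{nk}(z)\to\infty$, so no such bound holds; the equivalence there requires a separate argument that both products tend to $0$. Second, even when the limit is non-vanishing, convergence of $\exp(ir_{n}-\sum_{k}h_{nk}(z))$ only controls $\sum_{k}\Re h_{nk}(z)$ (the imaginary part is determined only modulo $2\pi$), and passing from that to a bound on $\sum_{k}|h_{nk}(z)|$ requires the estimate $|\Im h_{nk}(z)|\leq M\,\Re h_{nk}(z)$ on $\mathcal{D}$. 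That estimate is not soft complex analysis: it amounts to $|\int_{\mathbb{T}}\Im\zeta\,d\mu_{nk}^{\circ}|\leq C\int_{\mathbb{T}}(1-\Re\zeta)\,d\mu_{nk}^{\circ}$, which holds only because the rotation by $b_{nk}$ removes the first-order part of $\arg\zeta$; for an uncentered point mass $\delta_{e^{i\theta}}$ one has $\Im\zeta\approx\theta$ against $1-\Re\zeta\approx\theta^{2}/2$, so no such inequality can follow from Montel or Cauchy estimates alone. The paper records this inequality separately (quoting \cite{Mullimit}) and then invokes Lemma \ref{lemma5.4}, whose hypotheses $\Re w_{nk}\leq0$ and $|\Im w_{nk}|\leq M|\Re w_{nk}|$ are exactly what is needed to handle both the zero-limit and nonzero-limit cases simultaneously. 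Your proof of (3) needs this input to close.
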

\begin{proof}
(1) and (3) are proved in \cite{Boolean}. To prove (2), let us
consider the analytic function \[
h_{\mu}(z)=\int_{\mathbb{T}}\frac{1+\zeta z}{1-\zeta
z}\,(1-\Re\zeta)d\mu(\zeta),\qquad z\in\mathbb{D},\] for a measure
$\mu\in\mathcal{M}_{\mathbb{T}}$. For $z,w\in\mathcal{D}$,
 we have
\begin{eqnarray*}
\left|h_{\mu}(w)-h_{\mu}(z)\right|
 & \leq &
 \left|w-z\right|\int_{\mathbb{T}}
  \frac{2}{(1-\left|z\right|)(1-\left|w\right|)}\,(1-\Re\zeta)d\mu(\zeta)\\
 & \leq &
 4\left|w-z\right|\int_{\mathbb{T}}(1-\Re\zeta)d\mu(\zeta).
  \end{eqnarray*}
In addition, Harnack's inequality shows that there exists
$M=M(\mathcal{D})>0$ such that
 \[
 \Re\left[\frac{1+\zeta z}{1-\zeta
z}\right]\geq M,\qquad z\in\mathcal{D},\:\zeta\in\mathbb{T}.
 \]
Therefore, we deduce that
 \[
M\int_{\mathbb{T}}(1-\Re\zeta)d\mu(\zeta)\leq\int_{\mathbb{T}}\Re\left[\frac{1+\zeta
z}{1-\zeta z}\right]\,(1-\Re\zeta)d\mu(\zeta)=\Re
h_{\mu}(z)\leq\left|h_{\mu}(z)\right|,
 \] for every
$z\in\mathcal{D}$.
 (2) follows from these considerations.
\end{proof}
\begin{lemma}\label{lemma5.3} For sufficiently large $n$, there
exists a disk $\mathcal{D}\subset\mathbb{D}$ centered at zero such
that \[
1-\overline{b_{nk}}\Sigma_{(\mu_{nk},\nu_{nk})}(z)=h_{nk}(z)(1+u_{nk}(z)),\qquad
z\in\mathcal{D},\,1\le k\leq k_{n},\]
 where the limit \[
\lim_{n\rightarrow\infty}\max_{1\leq k\leq
k_{n}}\left|u_{nk}(z)\right|=0\]
 holds uniformly in $\mathcal{D}$. \end{lemma}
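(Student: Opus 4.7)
The plan is to express $\Sigma_{(\mu_{nk},\nu_{nk})}(z)$ as $B_{\mu_{nk}}\circ\eta_{\nu_{nk}}^{\langle-1\rangle}(z)$ via Lemma \ref{Bnu}, then apply Proposition \ref{prop52}(1) to replace $1-\overline{b_{nk}}B_{\mu_{nk}}(w)$ by $h_{nk}(w)(1+o(1))$, and finally use Proposition \ref{prop52}(2) together with the infinitesimality of $\{\nu_{nk}\}$ to pass from $h_{nk}(\eta_{\nu_{nk}}^{\langle-1\rangle}(z))$ to $h_{nk}(z)$.

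First I would compute, from the definition $d\mu_{nk}^{\circ}(\zeta)=d\mu_{nk}(b_{nk}\zeta)$, that $m_{\mu_{nk}^{\circ}}(z)=m_{\mu_{nk}}(\overline{b_{nk}}z)$, and hence $B_{\mu_{nk}^{\circ}}(z)=\overline{b_{nk}}\,B_{\mu_{nk}}(\overline{b_{nk}}z)$. Substituting $z=b_{nk}w$ in Proposition \ref{prop52}(1) and rearranging therefore yields the uniform estimate
\[
1-\overline{b_{nk}}\,B_{\mu_{nk}}(w)=h_{nk}(w)(1+o(1))
\]
as $n\to\infty$, valid for $w$ in some disk $\mathcal{D}_{0}\subset\mathbb{D}$ centered at zero. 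Plugging $w=\eta_{\nu_{nk}}^{\langle-1\rangle}(z)$ and invoking Lemma \ref{Bnu}, I obtain
\[
1-\overline{b_{nk}}\,\Sigma_{(\mu_{nk},\nu_{nk})}(z)=h_{nk}\!\left(\eta_{\nu_{nk}}^{\langle-1\rangle}(z)\right)(1+o(1)),
\]
valid on any disk $\mathcal{D}\subset\mathbb{D}$ on which $\eta_{\nu_{nk}}^{\langle-1\rangle}(\mathcal{D})\subset\mathcal{D}_{0}$ uniformly in $k$ for all sufficiently large $n$.

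The remaining step is to replace $h_{nk}(\eta_{\nu_{nk}}^{\langle-1\rangle}(z))$ by $h_{nk}(z)$. Proposition \ref{prop52}(2) provides the bound
\[
\left|h_{nk}\!\left(\eta_{\nu_{nk}}^{\langle-1\rangle}(z)\right)-h_{nk}(z)\right|\leq L\,|h_{nk}(z)|\,\left|\eta_{\nu_{nk}}^{\langle-1\rangle}(z)-z\right|,
\]
so it suffices to show that $\eta_{\nu_{nk}}^{\langle-1\rangle}(z)-z\to 0$ uniformly in $k$ and $z\in\mathcal{D}$. For this I would use the infinitesimality of $\{\nu_{nk}\}$ to conclude, as in Proposition \ref{prop5.1}(i), that $\eta_{\nu_{nk}}(z)\to z$ uniformly on compact subsets of $\mathbb{D}$; since $\eta_{\nu_{nk}}'(0)=\int\zeta\,d\nu_{nk}(\zeta)\to 1$ uniformly in $k$, an application of the inverse function theorem with uniform constants (or of Rouch\'e's theorem) guarantees that the inverses $\eta_{\nu_{nk}}^{\langle-1\rangle}$ are defined on a common disk $\mathcal{D}$ centered at zero and that $\eta_{\nu_{nk}}^{\langle-1\rangle}(z)\to z$ uniformly there.

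The main obstacle is this last step: converting measure-theoretic infinitesimality of $\{\nu_{nk}\}$ into the uniform (in $k$) existence and convergence of $\eta_{\nu_{nk}}^{\langle-1\rangle}$ on a fixed disk $\mathcal{D}$. Once this is established, the conclusion follows by multiplying the two factors $(1+o(1))$ produced by Propositions \ref{prop52}(1) and \ref{prop52}(2): setting $u_{nk}(z)$ equal to their product minus one, we get $\max_{k}\sup_{z\in\mathcal{D}}|u_{nk}(z)|\to 0$, which is precisely the assertion of the lemma.
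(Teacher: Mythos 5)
Your proof is correct and follows essentially the same route as the paper: combine Proposition \ref{prop52}(1) (rewritten in terms of $B_{\mu_{nk}}$ via the identity $B_{\mu_{nk}^{\circ}}(z)=\overline{b_{nk}}B_{\mu_{nk}}(\overline{b_{nk}}z)$), the identity $\Sigma_{(\mu_{nk},\nu_{nk})}=B_{\mu_{nk}}\circ\eta_{\nu_{nk}}^{\langle-1\rangle}$ from Lemma \ref{Bnu}, and the Lipschitz estimate of Proposition \ref{prop52}(2) together with $\eta_{\nu_{nk}}^{\langle-1\rangle}(z)=z(1+o(1))$. The step you single out as the main obstacle --- the uniform (in $k$) existence of $\eta_{\nu_{nk}}^{\langle-1\rangle}$ on a common disk and its uniform convergence to the identity --- is exactly what the paper disposes of by citing the corresponding result from the reference on Hin\v{c}in's theorem for multiplicative free convolution, so your inverse-function-theorem sketch is an acceptable substitute.
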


\begin{proof}
Introduce measures \[
d\nu_{nk}^{\circ}(\zeta)=d\nu_{nk}(b_{nk}\zeta),\]
 where the complex numbers $b_{nk}$ are defined as in (10). It was
shown in \cite{MulHincin} that the limits
$\lim_{n\rightarrow\infty}\eta_{\nu_{nk}}(z)=z$ and
$\lim_{n\rightarrow\infty}\eta_{\nu_{nk}^{\circ}}(z)=z$
 hold uniformly in $k$ and on compact subsets of $\mathbb{D}$.
  In particular, it follows that, as $n$ tends to infinity, there exists
a disk $\mathcal{D}\subset\mathbb{D}$ centered at zero such that the
functions $\Sigma_{(\mu_{nk},\nu_{nk})}$
 and
$\Sigma_{(\mu_{nk}^{\circ},\nu_{nk}^{\circ})}$
 are both defined in
$\mathcal{D}$
 and
$\eta_{\nu_{nk}}^{\langle-1\rangle}(z)=z(1+o(1))$
uniformly in $k$ and $z\in\mathcal{D}$.

Then the desired result follows form \ref{prop52}(i),
\ref{prop52}(ii), and the following observation:
 \[
\Sigma_{(\mu_{nk},\nu_{nk})}(z)
 =
 b_{nk}\Sigma_{(\mu_{nk}^{\circ},\nu_{nk}^{\circ})}(z)
  =
  b_{nk}B_{\mu_{nk}^{\circ}}\left(\eta_{\nu_{nk}^{\circ}}^{\langle -1\rangle}(z)\right)
   =
   b_{nk}B_{\mu_{nk}^{\circ}}\left(b_{nk}\eta_{\nu_{nk}}^{\langle -1\rangle}(z)\right).
 \]

\end{proof}
As shown in \cite{Mullimit}, for every neighborhood of zero
$\mathcal{D}\subset\mathbb{D}$ there exists $M=M(\mathcal{D})>0$
such that
 \[
  \left|\Im h_{nk}(z)\right|\leq M\Re h_{nk}(z),\qquad
z\in\mathcal{D},\,1\leq k\leq k_{n},
  \]
 for sufficiently large $n$.

Suppose that $\mathcal{D}\subset\mathbb{D}$ is a neighborhood of
zero. The infinitesimality of the arrays $\{\mu_{nk}\}_{n,k}$ and
$\{\nu_{nk}\}_{n,k}$ implies that the functions
$\Sigma_{(\mu_{nk},\nu_{nk})}(z)$ converge uniformly in $k$ and
$z\in\mathcal{D}$ to $1$ as $n\rightarrow\infty$. It follows that
the principal branch of $\log\Sigma_{(\mu_{nk},\nu_{nk})}(z)$ is
defined in $\mathcal{D}$ for large $n$. Moreover, we have
  \[
\log\overline{b_{nk}}\Sigma_{(\mu_{nk},\nu_{nk})}(z)
 =
 \left[\overline{b_{nk}}\Sigma_{(\mu_{nk},\nu_{nk})}(z)-1\right](1+o(1))
  \]
 uniformly in $k$ and $z\in\mathcal{D}$ when $n$ is sufficiently
large.

To prove Theorem \ref{thm55}, we will need again an auxiliary result
from \cite{Boolean}:

\begin{lemma}\label{lemma5.4} Consider a sequence
  $\{ r_{n}\}_{n=1}^{\infty}\subset\mathbb{R}$
and two triangular arrays $\{ z_{nk}\}_{n,k}$ and
 $\{w_{nk}\}_{n,k}$
of complex numbers. Suppose that
\begin{enumerate}
\item [(1)] $\Re w_{nk}\leq0$ and $\Re z_{nk}\leq0$ for all $n$ and
$k$;
\item [(2)] $z_{nk}=w_{nk}(1+\varepsilon_{nk})$,
 where
  $\lim_{n\rightarrow\infty}\max_{1\leq k\leq k_{n}}\left|\varepsilon_{nk}\right|=0$;
\item [(3)] there exists a constant $M>0$
 such that
  $\left|\Im w_{nk}\right|\leq M\left|\Re w_{nk}\right|$
for all $n,k$.
\end{enumerate}
Then the sequence
$\{\exp(ir_{n}+\sum_{k=1}^{k_{n}}z_{nk})\}_{n=1}^{\infty}$ converges
if and only if the sequence
$\{\exp(ir_{n}+\sum_{k=1}^{k_{n}}w_{nk})\}_{n=1}^{\infty}$ does.
Moreover, the two sequences have the same limit.

\end{lemma}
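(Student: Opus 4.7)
The plan is to reduce the equivalence of convergence to estimating the difference $\sum_{k=1}^{k_n}(z_{nk}-w_{nk}) = \sum_{k=1}^{k_n} w_{nk}\varepsilon_{nk}$ in terms of the common quantity $S_n := \sum_{k=1}^{k_n}\Re w_{nk} \leq 0$. From hypothesis (3), each $|w_{nk}| \leq \sqrt{1+M^{2}}\,|\Re w_{nk}|$, so
\[
\left|\sum_{k=1}^{k_n}(z_{nk}-w_{nk})\right| \leq \sqrt{1+M^{2}}\,\max_{1\leq k\leq k_n}|\varepsilon_{nk}|\cdot(-S_n).
\]

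Now suppose $\exp(ir_n+\sum_k w_{nk}) \to L$. If $|L|>0$, then $S_n \to \log|L|$ is bounded, so the display above together with hypothesis (2) forces $\sum_k(z_{nk}-w_{nk}) \to 0$, which yields $\exp(ir_n+\sum_k z_{nk}) \to L$ as well. If instead $L=0$, then $S_n \to -\infty$, and I would handle this case via the modulus: writing $\Re z_{nk} = \Re w_{nk} + \Re(w_{nk}\varepsilon_{nk})$ and bounding $|\Re(w_{nk}\varepsilon_{nk})| \leq \sqrt{1+M^2}|\Re w_{nk}||\varepsilon_{nk}|$, once $n$ is large enough that $\sqrt{1+M^2}\max_k|\varepsilon_{nk}| < 1/2$ one obtains the term-by-term comparison $\Re z_{nk} \leq \tfrac{1}{2}\Re w_{nk}$. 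Summing, $\sum_k \Re z_{nk} \leq S_n/2 \to -\infty$, so $|\exp(ir_n+\sum_k z_{nk})| \to 0 = |L|$.

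For the converse direction I would invert the relation to $w_{nk} = z_{nk}(1+\eta_{nk})$ with $\eta_{nk} = -\varepsilon_{nk}/(1+\varepsilon_{nk})$ satisfying $\max_k|\eta_{nk}| \to 0$, then verify that the same real-part comparison gives $|\Re w_{nk}| \leq 2|\Re z_{nk}|$ and $|\Im z_{nk}| \leq 2(M+1)|\Re z_{nk}|$ for large $n$. Thus $z$ satisfies an analogue of hypothesis (3), and the argument above applies verbatim with the roles of $z$ and $w$ interchanged.

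The main obstacle is the case $L=0$, where we cannot pass to logarithms and back: the argument must instead be made via the modulus $\exp(\sum_k\Re z_{nk})$, and the term-by-term comparability $\Re z_{nk} \leq \tfrac{1}{2}\Re w_{nk}$ is exactly where the interplay of hypotheses (2) and (3) is decisive — hypothesis (3) is what converts the multiplicative perturbation of $w_{nk}$ into a comparable multiplicative perturbation of $\Re w_{nk}$.
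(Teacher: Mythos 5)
Your proof is correct. Note that the paper itself does not prove this lemma at all --- it is quoted verbatim as an auxiliary result from the reference [Boolean] (Wang, \emph{Limit laws for boolean convolutions}) --- so there is no in-paper argument to compare against; your write-up supplies a sound self-contained proof. The two points that carry the whole argument are exactly the ones you isolate: hypothesis (3) gives $|w_{nk}|\leq\sqrt{1+M^{2}}\,|\Re w_{nk}|$, which turns the perturbation $\sum_k w_{nk}\varepsilon_{nk}$ into something controlled by $-S_n$; and in the degenerate case $L=0$ (where $S_n\to-\infty$ and the difference of exponents need not tend to zero) the termwise comparison $\Re z_{nk}\leq\tfrac12\Re w_{nk}$ lets you conclude via moduli alone. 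The converse direction via $w_{nk}=z_{nk}(1+\eta_{nk})$, after checking that $z_{nk}$ inherits a bound of the form $|\Im z_{nk}|\leq M'|\Re z_{nk}|$ for large $n$, is also handled correctly (the precise constant $M'$ is immaterial, and the inversion is legitimate once $\max_k|\varepsilon_{nk}|<1$).
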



\begin{thm}
\label{thm55} Let $\{\lambda_{n}\}_{n=1}^{\infty}$ and
$\{\lambda_{n}^{\prime}\}_{n=1}^{\infty}$ be two sequences in
$\mathbb{T}$, and let $\{\mu_{nk}\}_{n,k}$ and $\{\nu_{nk}\}_{n,k}$
be two infinitesimal arrays in $\mathcal{M}_{\mathbb{T}}^{\times}$.
Assume that $\mathcal{D}\subset\mathbb{D}$ is a neighborhood of
zero. Then the sequence of functions
$\{\lambda_{n}\prod_{k=1}^{k_{n}}\Sigma_{(\mu_{nk},\nu_{nk})}(z)\}_{n=1}^{\infty}$
converges uniformly on $\mathcal{D}$ if and only if the sequence of
functions
$\{\lambda_{n}\prod_{k=1}^{k_{n}}B_{\mu_{nk}}(z)\}_{n=1}^{\infty}$
does. Moreover, the two sequences have the same limit function.
\end{thm}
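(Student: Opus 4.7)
The plan is to reduce the statement to a direct application of Lemma~\ref{lemma5.4}, using Lemma~\ref{lemma5.3} to link the factors $\Sigma_{(\mu_{nk},\nu_{nk})}$ to the auxiliary functions $h_{nk}$, and then to invoke Proposition~\ref{prop52}(3) to identify the resulting exponential sum with the product $\lambda_n\prod_k B_{\mu_{nk}}$.

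First I would pass to principal logarithms: for $n$ large enough that each $\Sigma_{(\mu_{nk},\nu_{nk})}(z)$ is close to $1$ uniformly in $k$ and $z\in\mathcal{D}$, one has
\[
\lambda_n \prod_{k=1}^{k_n}\Sigma_{(\mu_{nk},\nu_{nk})}(z)
=\exp\Big(i\arg\lambda_n+i\sum_{k=1}^{k_n}\arg b_{nk}+\sum_{k=1}^{k_n}\log\bigl(\overline{b_{nk}}\,\Sigma_{(\mu_{nk},\nu_{nk})}(z)\bigr)\Big).
\]
Combining the expansion $\log\overline{b_{nk}}\Sigma=(\overline{b_{nk}}\Sigma-1)(1+o(1))$ recorded just before the theorem with Lemma~\ref{lemma5.3} yields
\[
\log\bigl(\overline{b_{nk}}\,\Sigma_{(\mu_{nk},\nu_{nk})}(z)\bigr)=-h_{nk}(z)\bigl(1+\varepsilon_{nk}(z)\bigr),
\]
with $\max_{1\le k\le k_n}|\varepsilon_{nk}(z)|\to 0$ uniformly on $\mathcal{D}$.

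Second, I would apply Lemma~\ref{lemma5.4} with $r_n=\arg\lambda_n+\sum_k\arg b_{nk}$, $z_{nk}=\log(\overline{b_{nk}}\,\Sigma_{(\mu_{nk},\nu_{nk})}(z))$, and $w_{nk}=-h_{nk}(z)$, pointwise in $z\in\mathcal{D}$. All three hypotheses are in place: $\Re w_{nk}\le 0$ because $\Re h_{nk}\ge 0$; $\Re z_{nk}=\log|\Sigma_{(\mu_{nk},\nu_{nk})}(z)|\le 0$ since $\Sigma$ is bounded by $1$ (as noted in Section~4); the relation $z_{nk}=w_{nk}(1+\varepsilon_{nk})$ is precisely the content of step one; and the estimate $|\Im h_{nk}(z)|\le M\Re h_{nk}(z)$, stated just before the theorem, provides the third hypothesis. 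Lemma~\ref{lemma5.4} therefore gives, for each fixed $z\in\mathcal{D}$, that the scalar sequence $\{\lambda_n\prod_k\Sigma_{(\mu_{nk},\nu_{nk})}(z)\}$ converges if and only if $\{\exp(i\arg\lambda_n+i\sum_k\arg b_{nk}-\sum_k h_{nk}(z))\}$ does, with the same limit.

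Finally, Proposition~\ref{prop52}(3) identifies the latter sequence's convergence (uniformly on compacta) with that of $\{\lambda_n\prod_k B_{\mu_{nk}}(z)\}$, again with identical limit. Composing the two equivalences gives the desired conclusion pointwise on $\mathcal{D}$; uniform convergence is then automatic by Vitali's theorem, since both $|\Sigma_{(\mu_{nk},\nu_{nk})}|$ and $|B_{\mu_{nk}}|$ are bounded by $1$ in $\mathbb{D}$, so the partial-product families are normal on $\mathcal{D}$. The main obstacle is precisely this mismatch between Lemma~\ref{lemma5.4} (scalar sequences) and the uniform-convergence conclusion we need; it is resolved by keeping track of the uniformity in $z$ of the error $\varepsilon_{nk}(z)$ and applying Vitali at the end. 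A secondary point is the consistent choice of logarithm branches, which is available for large $n$ thanks to infinitesimality.
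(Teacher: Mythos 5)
Your argument is correct and follows essentially the same route as the paper: write the product as an exponential of a sum of principal logarithms, use Lemma~\ref{lemma5.3} together with the expansion of $\log$ to replace $\log\bigl(\overline{b_{nk}}\Sigma_{(\mu_{nk},\nu_{nk})}(z)\bigr)$ by $-h_{nk}(z)(1+o(1))$, apply Lemma~\ref{lemma5.4}, and then invoke Proposition~\ref{prop52}(3) plus normality to pass from pointwise to uniform convergence. Your version merely makes explicit the verification of the hypotheses of Lemma~\ref{lemma5.4}, which the paper leaves implicit.
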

\begin{proof}
For sufficiently large $n$ and $z\in\mathcal{D}$, let us write
 \[
\lambda_{n}\prod_{k=1}^{k_{n}}\Sigma_{(\mu_{nk},\nu_{nk})}(z)
 =\exp\left(i\arg\lambda_{n}+i\sum_{k=1}^{k_{n}}
 \arg b_{nk}+\sum_{k=1}^{k_{n}}\log\overline{b_{nk}}\Sigma_{(\mu_{nk},\nu_{nk})}(z)\right).
 \]
 Applying Lemmas \ref{lemma5.3} and \ref{lemma5.4} to the arrays
$\{\log\overline{b_{nk}}\Sigma_{(\mu_{nk},\nu_{nk})}(z)\}_{n,k}$
and
$\{-h_{nk}(z)\}_{n,k}$, we conclude that the sequences
$\{\exp(i\arg\lambda_{n}+i\sum_{k=1}^{k_{n}}\arg
b_{nk}-\sum_{k=1}^{k_{n}}h_{nk}(z))\}_{n=1}^{\infty}$
 and
$\{\lambda_{n}\prod_{k=1}^{k_{n}}
 \Sigma_{(\mu_{nk},\nu_{nk})}(z)\}_{n=1}^{\infty}$
have the same asymptotic behavior as $n\rightarrow\infty$. The
desired result follows from the fact that the above sequences are
normal families of analytic functions, and from \ref{prop52}(3).
\end{proof}

Fix now $\gamma,\gamma^{\prime}\in\mathbb{T}$ and finite positive
Borel measures $\sigma,\sigma^{\prime}$ on $\mathbb{T}$.
  Recall that
$\nu_{\Utimes}^{\gamma,\sigma}$ (resp.,
$\nu_{\boxtimes}^{\gamma^{\prime},\sigma^{\prime}}$) is the
$\Utime$(resp., $\boxtimes$)-infinitely divisible measure we have
seen earlier.
\begin{thm}
\label{thm5.6} Let $\{\lambda_{n}\}_{n=1}^{\infty}$ and
$\{\lambda_{n}^{\prime}\}_{n=1}^{\infty}$ be two sequences in
$\mathbb{T}$, and let $\{\mu_{nk}\}_{n,k}$ and $\{\nu_{nk}\}_{n,k}$
be two infinitesimal arrays in $\mathcal{M}_{\mathbb{T}}^{\times}$.
Define
 \[
(\mu_{n},\nu_{n})
 =(\delta_{\lambda_{n}},\delta_{\lambda_{n}^{\prime}})
  \boxtimes(\mu_{n1},\nu_{n1})\boxtimes(\mu_{n2},\nu_{n2})
  \boxtimes\cdots\boxtimes(\mu_{nk_{n}},\nu_{nk_{n}})
  \]
and
$\rho_{n}
 =\delta_{\lambda_{n}}\utimes\mu_{n1}\utimes\mu_{n2}\utimes\cdots\utimes\mu_{nk_{n}}$,
for every $n\in\mathbb{N}$.
 Suppose that
$\{\nu_{n}\}_{n=1}^{\infty}$ converges weakly to
$\nu_{\boxtimes}^{\gamma^{\prime},\sigma^{\prime}}$.
Then the following statements are equivalent:
\begin{enumerate}
\item The sequence $\mu_{n}$ converges weakly to a measure $\mu\in\mathcal{M}_{\mathbb{T}}^{\times}$.
\item The sequence $\rho_{n}$ converges weakly to $\nu_{\Utimes}^{\gamma,\sigma}$.
\item The sequence of measures
 \[
d\sigma_{n}(\zeta)=\sum_{k=1}^{k_{n}}(1-\Re\zeta)\,
d\mu_{nk}^{\circ}(\zeta)
 \]
  converges weakly on $\mathbb{T}$ to the
measure $\sigma$, and the sequence of complex numbers
 \[
\gamma_{n}=\exp\left(i\arg\lambda_{n}
 +i\sum_{k=1}^{k_{n}}\arg b_{nk}+i\sum_{k=1}^{k_{n}}
  \int_{\mathbb{T}}\Im\zeta\, d\mu_{nk}^{\circ}(\zeta)\right)
 \]
 converges to $\gamma$ as
$n\rightarrow\infty$.
\end{enumerate}
Moreover, if \textup{(1)}-\textup{(3)} are satisfied, then we have
\[
\Sigma_{\left(\mu,\nu_{\boxtimes}^{\gamma^{\prime},\sigma^{\prime}}\right)}(z)
 =B_{\nu_{\Utimes}^{\gamma,\sigma}}(z)
 \]
in a neighborhood of zero.
\end{thm}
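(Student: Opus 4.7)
The plan is to reduce the equivalence (1)$\Leftrightarrow$(2) to Theorem~\ref{thm55} via the multiplicativity of the $\Sigma$-transform under c-free convolution, and then to obtain (2)$\Leftrightarrow$(3) from the Hinčin-type limit theorem for multiplicative boolean convolution on $\mathbb{T}$ already established in \cite{Boolean}.

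First, because ${}^cT$ is multiplicative under $\boxtimes$ (hence so is $\Sigma$, by its definition~\eqref{defsigma}) and since $\Sigma_{(\delta_{\lambda_n},\delta_{\lambda_n^{\prime}})}(z)\equiv\lambda_n$, the definition of $(\mu_n,\nu_n)$ yields
\[
\Sigma_{(\mu_n,\nu_n)}(z)=\lambda_n\prod_{k=1}^{k_n}\Sigma_{(\mu_{nk},\nu_{nk})}(z)
\]
on a common neighborhood $\mathcal{D}$ of zero. The parallel identity $B_{\rho_n}(z)=\lambda_n\prod_{k=1}^{k_n}B_{\mu_{nk}}(z)$ comes from multiplicativity of $B$ under $\utimes$ combined with $B_{\delta_{\lambda_n}}(z)\equiv\lambda_n$.

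Next I would invoke Theorem~\ref{thm55}: the sequence $\{\lambda_n\prod_{k=1}^{k_n}\Sigma_{(\mu_{nk},\nu_{nk})}(z)\}_n$ converges uniformly on $\mathcal{D}$ if and only if $\{B_{\rho_n}(z)\}_n$ does, and the two limit functions then coincide. Applying Proposition~\ref{prop5.1}(ii) (which requires the hypothesis $\nu_n\to\nu_\boxtimes^{\gamma^{\prime},\sigma^{\prime}}$ to secure a common domain of analyticity), we see that (1) is equivalent to uniform convergence of $\{B_{\rho_n}\}$ on $\mathcal{D}$. By Proposition~\ref{prop5.1}(i), together with Montel's theorem that extends uniform convergence from $\mathcal{D}$ to compact subsets of $\mathbb{D}$ (legitimate because $|B_{\rho_n}|\le1$), this is in turn equivalent to weak convergence of $\rho_n$, which is (2). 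Equating the two limit functions yields $\Sigma_{(\mu,\nu_\boxtimes^{\gamma^{\prime},\sigma^{\prime}})}(z)=B_{\rho}(z)$; once (3) is used to identify $\rho=\nu_\Utimes^{\gamma,\sigma}$, this is the final identity of the theorem. The equivalence (2)$\Leftrightarrow$(3) is then the classical Lévy--Hinčin characterization for $\utimes$-convergence on $\mathbb{T}$ proved in \cite{Boolean}: under infinitesimality, $\rho_n$ converges weakly to $\nu_\Utimes^{\gamma,\sigma}$ precisely when $\sigma_n\to\sigma$ weakly on $\mathbb{T}$ and $\gamma_n\to\gamma$.

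The main obstacle is choosing the disk $\mathcal{D}$: it must simultaneously contain the domains of the functions $\Sigma_{(\mu_n,\nu_n)}$ and $\eta_{\nu_n}^{\langle-1\rangle}$ for all sufficiently large $n$, send $\eta_{\nu_\boxtimes^{\gamma^{\prime},\sigma^{\prime}}}^{\langle-1\rangle}(\mathcal{D})$ into a disk small enough for Theorem~\ref{thm55} to apply, and respect the radius restriction of Proposition~\ref{prop52}. The uniform asymptotics $\eta_{\nu_n}^{\langle-1\rangle}(z)=z(1+o(1))$ from \cite{MulHincin}, already exploited in Lemma~\ref{lemma5.3}, ensure that a single such $\mathcal{D}$ can be selected.
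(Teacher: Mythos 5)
Your proposal follows essentially the same route as the paper: the same two product identities for $\Sigma_{(\mu_{n},\nu_{n})}$ and $B_{\rho_{n}}$, Theorem \ref{thm55} to transfer convergence between the two sequences, Proposition \ref{prop5.1} to translate back and forth to weak convergence, and the boolean Hin\v{c}in-type theorem of \cite{Boolean} for the equivalence of (2) and (3). The only steps you leave implicit, which the paper spells out, are the identifications of the limit measures: for (1)$\Rightarrow$(2) one must check that the weak limit of $\rho_{n}$ has nonzero first moment (it equals $\int_{\mathbb{T}}\zeta\,d\mu(\zeta)\neq0$) and is $\Utimes$-infinitely divisible, so that it is indeed of the form $\nu_{\Utimes}^{\gamma,\sigma}$, and for (2)$\Rightarrow$(1) one must first realize the limit function $B_{\nu_{\Utimes}^{\gamma,\sigma}}$ as $\Sigma_{(\mu,\nu_{\boxtimes}^{\gamma^{\prime},\sigma^{\prime}})}$ for an actual measure $\mu\in\mathcal{M}_{\mathbb{T}}^{\times}$ before Proposition \ref{prop5.1}(ii) can be applied.
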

\begin{proof}
The equivalence between (2) and (3) was proved in \cite{Boolean}. We
will show the equivalence of (1) and (2). Note first that we have
\[
\Sigma_{(\mu_{n},\nu_{n})}(z)=\lambda_{n}\prod_{k=1}^{k_{n}}\Sigma_{(\mu_{nk},\nu_{nk})}(z)
\]
 in a neighborhood of zero $\mathcal{D}\subset\mathbb{D}$ and
  \[
B_{\rho_{n}}(z)=\lambda_{n}\prod_{k=1}^{k_{n}}B_{\mu_{nk}}(z),\qquad
z\in\mathbb{D}.\
\]

Suppose that (1) holds. By Proposition \ref{prop5.1}, we have
  \[
\lim_{n\rightarrow\infty}\Sigma_{(\mu_{n},\nu_{n})}(z)
 =
  \Sigma_{\left(\mu,\nu_{\boxtimes}^{\gamma^{\prime},\sigma^{\prime}}\right)}(z)
  \]
 uniformly on $\mathcal{D}$. Then Theorem \ref{thm55} implies that
\[
 \lim_{n\rightarrow\infty}B_{\rho_{n}}(z)
 =
  \Sigma_{\left(\mu,\nu_{\boxtimes}^{\gamma^{\prime},\sigma^{\prime}}\right)}(z)
 \]
 uniformly on $\mathcal{D}$. Note that the function
 $\eta_{\nu_{\boxtimes}^{\gamma^{\prime},\sigma^{\prime}}}^{\langle-1 \rangle}(z)$
can be analytically continued to the whole disk $\mathbb{D}$. It
follows that there exists a unique measure
$\nu\in\mathcal{M}_{\mathbb{T}}$
such that
 \[
B_{\nu}(z)=\Sigma_{\left(
 \mu,\nu_{\boxtimes}^{\gamma^{\prime},\sigma^{\prime}}
  \right)}(z),\qquad z\in\mathcal{D}.
\]
 Note that
 \[
\int_{\mathbb{T}}\zeta\, d\nu(\zeta)=B_{\nu}(0)
 =
  \Sigma_{\left(\mu,\nu_{\boxtimes}^{\gamma^{\prime},\sigma^{\prime}}\right)}(0)
 =
  \int_{\mathbb{T}}\zeta\, d\mu(\zeta)\neq0.
  \]
 We conclude that $\{\rho_{n}\}_{n=1}^{\infty}$
converges weakly to the measure
$\nu\in\mathcal{M}_{\mathbb{T}}^{\times}$ by Proposition 5.1, and
hence the limit law $\nu$ is $\Utime$-infinitely divisible as we
have mentioned earlier. Consequently, the measure $\nu$ is of the
form $\nu_{\Utimes}^{\gamma,\sigma}$ for some $\gamma\in\mathbb{T}$
and a finite Borel measure $\sigma$ on $\mathbb{T}$. Hence (2)
holds.

Assume now (2). Then we have
 \[
\lim_{n\rightarrow\infty}B_{\rho_{n}}(z)=B_{\nu_{\Utimes}^{\gamma,\sigma}}(z)
 \]
 uniformly on compact subsets of $\mathbb{D}$. Theorem \ref{thm55}
shows that
\[
\lim_{n\rightarrow\infty}\Sigma_{(\mu_{n},\nu_{n})}(z)
 =
  B_{\nu_{\Utimes}^{\gamma,\sigma}}(z)
 \]
 uniformly in a neighborhood of zero. Observe now there exists a measure
$\mu\in\mathcal{M}_{\mathbb{T}}^{\times}$ such that
 \[
B_{\mu}(z)=B_{\nu_{\Utimes}^{\gamma,\sigma}}
 \left(
  \eta_{\nu_{\boxtimes}^{\gamma^{\prime},\sigma^{\prime}}}(z)
 \right),
 \qquad z\in\mathbb{D}.
 \]
 Therefore the function $B_{\nu_{\Utimes}^{\gamma,\sigma}}(z)$ has
the form
$\Sigma_{
 \left(
  \mu,\nu_{\boxtimes}^{\gamma^{\prime},\sigma^{\prime}}
   \right)}(z)$
in a neighborhood of zero, and (1) follows from Proposition
\ref{prop5.1}.
\end{proof}
Next, we address the case that the measures $\mu_{n}$ converge to
$m$, for $m$ the Haar measure on $\mathbb{T}$.
\begin{thm}
\label{thm5.7} Let $\{\lambda_{n}\}_{n=1}^{\infty}$,
$\{\lambda_{n}^{\prime}\}_{n=1}^{\infty}$ be sequences in
$\mathbb{T}$, and let $\{\mu_{nk}\}_{n,k}$ , $\{\nu_{nk}\}_{n,k}$ be
two infinitesimal arrays in $\mathcal{M}_{\mathbb{T}}^{\times}$. As
in the statement of \textup{Theorem 5.6}, define
 \[
(\mu_{n},\nu_{n})
 =(\delta_{\lambda_{n}},\delta_{\lambda_{n}^{\prime}})
  \boxtimes(\mu_{n1},\nu_{n1})\boxtimes(\mu_{n2},\nu_{n2})
  \boxtimes\cdots\boxtimes(\mu_{nk_{n}},\nu_{nk_{n}})
  \]
and $\rho_{n}
 =\delta_{\lambda_{n}}\utimes\mu_{n1}\utimes\mu_{n2}\utimes\cdots\utimes\mu_{nk_{n}}$,
for every $n\in\mathbb{N}$,
 and suppose that
$\{\nu_{n}\}_{n=1}^{\infty}$
 converges weakly to $\nu$. Then the
following assertions are equivalent:
\begin{enumerate}
\item [(1)] The sequence $\mu_{n}$ converges weakly to $m$.
\item [(2)] The sequence $\rho_{n}$ converges weakly to $m$.
\end{enumerate}
\end{thm}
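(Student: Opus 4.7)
My plan is to reduce the equivalence of (1) and (2) to three ingredients already developed in the paper: the definition of $\Sigma$ and $B$ for the Haar measure, Proposition \ref{prop5.1} (translating weak convergence into convergence of $B$ and $\Sigma$), and Theorem \ref{thm55} (which is precisely what compares products of $\Sigma$'s and products of $B$'s).

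First, I would use the two product identities
\[
\Sigma_{(\mu_{n},\nu_{n})}(z)=\lambda_{n}\prod_{k=1}^{k_{n}}\Sigma_{(\mu_{nk},\nu_{nk})}(z),
\qquad
B_{\rho_{n}}(z)=\lambda_{n}\prod_{k=1}^{k_{n}}B_{\mu_{nk}}(z),
\]
valid on a common neighborhood of zero $\mathcal{D}\subset\mathbb{D}$ (the first since $\Sigma$ linearizes $\boxtimes$ up to the constant Dirac factor, the second by the multiplicativity of $B$ under boolean convolution). Next, observe that because $\int_{\mathbb{T}}\zeta\,dm(\zeta)=0$, one has $B_{m}\equiv 0$, and hence $\Sigma_{(m,\nu)}(z)=B_{m}(\eta_{\nu}^{\langle-1\rangle}(z))\equiv 0$ for any $\nu\in\mathcal{M}_{\mathbb{T}}^{\times}$, in particular for the limit $\nu$ of $\{\nu_{n}\}$.

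The equivalence now follows from a short chain. Assume (1). By Proposition \ref{prop5.1}(ii), $\Sigma_{(\mu_{n},\nu_{n})}\to\Sigma_{(m,\nu)}\equiv 0$ uniformly on some neighborhood of zero. By Theorem \ref{thm55}, applied to the arrays $\{\mu_{nk}\}$ and $\{\nu_{nk}\}$, this forces $B_{\rho_{n}}=\lambda_{n}\prod_{k}B_{\mu_{nk}}$ to converge to the same limit, namely $0$, uniformly on $\mathcal{D}$. Since $|B_{\rho_{n}}|\le 1$ on $\mathbb{D}$, the family $\{B_{\rho_{n}}\}$ is normal, and a standard Montel-type argument promotes this local convergence to uniform convergence to $0$ on every compact subset of $\mathbb{D}$. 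Applying Proposition \ref{prop5.1}(i) then yields $\rho_{n}\to m$ weakly, that is (2). The converse runs the same chain in reverse: starting from $\rho_{n}\to m$, Proposition \ref{prop5.1}(i) gives $B_{\rho_{n}}\to 0$ uniformly on compacta, Theorem \ref{thm55} transfers this to $\Sigma_{(\mu_{n},\nu_{n})}\to 0$ on a neighborhood of zero, and Proposition \ref{prop5.1}(ii) (with target $\mu=m$, $\nu$ the weak limit of $\{\nu_{n}\}$) gives $\mu_{n}\to m$.

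The only technical point to watch is the applicability of Proposition \ref{prop5.1}(ii) when the limit measure $\mu=m$ has vanishing first moment, so that $m\notin\mathcal{M}_{\mathbb{T}}^{\times}$ and $\Sigma_{(m,\nu)}$ is merely the zero function; this is harmless since the statement of Proposition \ref{prop5.1}(ii) asks only that the relevant $\Sigma$'s be defined in a common neighborhood of zero, and that is secured by the assumed weak convergence $\nu_{n}\to\nu\in\mathcal{M}_{\mathbb{T}}^{\times}$ (which makes $\eta_{\nu_{n}}^{\langle-1\rangle}$ converge to $\eta_{\nu}^{\langle-1\rangle}$ on a common disk). Beyond that, no new estimate is required: the whole proof is a bookkeeping combination of Theorem \ref{thm55} with the two halves of Proposition \ref{prop5.1}.
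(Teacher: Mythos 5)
Your chain (Proposition \ref{prop5.1}(ii) $\to$ Theorem \ref{thm55} $\to$ Proposition \ref{prop5.1}(i), and back) is exactly the paper's argument in the generic case, but it has a genuine gap: Theorem \ref{thm5.7} does \emph{not} assume that the weak limit $\nu$ of $\{\nu_{n}\}$ lies in $\mathcal{M}_{\mathbb{T}}^{\times}$, whereas your "technical point to watch" explicitly invokes "the assumed weak convergence $\nu_{n}\to\nu\in\mathcal{M}_{\mathbb{T}}^{\times}$". Since $\nu$ is a limit of infinitesimal free convolutions it is $\boxtimes$-infinitely divisible, and the one such measure with vanishing first moment is the Haar measure $m$; so the case $\nu=m$ can occur. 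In that case $\eta_{\nu}\equiv0$, the inverse $\eta_{\nu}^{\langle-1\rangle}$ does not exist, $\Sigma_{(\mu,\nu)}$ is undefined, and Proposition \ref{prop5.1}(ii) simply does not apply — moreover the inverses $\eta_{\nu_{n}}^{\langle-1\rangle}$ no longer converge on a common disk, so the uniform-Lipschitz estimates underlying that proposition break down. Both directions of your argument lean on Proposition \ref{prop5.1}(ii) and therefore both fail in this case.

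The paper closes this gap in two different ways. For $(2)\Rightarrow(1)$ with $\nu=m$, it argues directly: $\eta_{\nu_{n}}\to0$ uniformly near the origin, so $\eta_{\nu_{n}}(\mathcal{D})\subset\mathcal{D}$ for large $n$, whence $B_{\mu_{n}}=\Sigma_{(\mu_{n},\nu_{n})}\circ\eta_{\nu_{n}}\to0$ on $\mathcal{D}$, and Proposition \ref{prop5.1}(i) gives $\mu_{n}\to m$ (your route is used verbatim when $\nu$ has nonzero first moment). For $(1)\Rightarrow(2)$ the paper sidesteps $\Sigma_{(m,\nu)}$ entirely: it evaluates at $z=0$ to get $\int_{\mathbb{T}}\zeta\,d\mu_{n}(\zeta)=\lambda_{n}\prod_{k}\int_{\mathbb{T}}\zeta\,d\mu_{nk}(\zeta)\to0$ and then cites the boolean limit theorem (\cite{Boolean}, Theorem 4.3), which says precisely that this first-moment condition forces $\rho_{n}\to m$. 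You should either restrict your argument to $\nu\in\mathcal{M}_{\mathbb{T}}^{\times}$ and treat $\nu=m$ separately as above, or replace the $(1)\Rightarrow(2)$ step by the first-moment computation, which is insensitive to the nature of $\nu$.
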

\begin{proof}
Assume (1) holds. Observe that
$\Sigma_{(\mu_{n},\nu_{n})}(0)=B_{\mu_{n}}(0)=\int_{\mathbb{T}}\zeta\,
d\mu_{n}(\zeta)$ and
 \[
\Sigma_{(\mu_{n},\nu_{n})}(0)
 =
 \lambda_{n}\prod_{k=1}^{k_{n}}\Sigma_{(\mu_{nk},\nu_{nk})}(0)
  =
  \lambda_{n}\prod_{k=1}^{k_{n}}\int_{\mathbb{T}}\zeta\,
d\mu_{nk}(\zeta),\qquad n\in\mathbb{N}.
 \]
 Since the sequence $\mu_{n}$ converges weakly to $m$ , the above
product converges to zero as $n$ tends to infinity. As shown in
\cite{Boolean}, Theorem 4.3, this condition implies that (2) holds.

Conversely, assume that (2) holds. Then the limit
$\lim_{n\rightarrow\infty}B_{\rho_{n}}(z)=0$
 holds uniformly on the
compact subsets of $\mathbb{D}$, and therefore Theorem \ref{thm55}
shows that
  \[
\lim_{n\rightarrow\infty}\Sigma_{(\mu_{n},\nu_{n})}(z)=0
 \]
 uniformly in a neighborhood of zero
  $\mathcal{D}\subset\mathbb{D}$.
Note first that the measure $\nu$ must be $\boxtimes$-infinitely
divisible (see \cite{MulHincin}). If the measure $\nu$ has nonzero
first moment, then we conclude, by Proposition \ref{prop5.1}, that
(1) holds since the zero function is of the form $\Sigma_{(m,\nu)}$.
If the first moment of the measure $\nu$ is zero, then $\nu=m$ and
the sequence
  $\{\eta_{\nu_{n}}(z)\}_{n=1}^{\infty}$
   converges
uniformly on $\mathcal{D}$ to zero. In particular, the set
$\eta_{\nu_{n}}(\mathcal{D})$ is contained in $\mathcal{D}$ when $n$
is sufficiently large. We conclude in this case that the functions
$B_{\mu_{n}}(z)=\Sigma_{(\mu_{n},\nu_{n})}(\eta_{\nu_{n}}(z))$
converge uniformly on $\mathcal{D}$ to zero as well, and therefore
the sequence $\mu_{n}$ converges weakly to $m$ by Proposition
\ref{prop5.1}.
\end{proof}
\begin{remark}\label{rem5.8} We remark here for a further use that
the proof of Theorem \ref{thm5.7} gives a convenient criterion of
determining whether a limit law is the Haar measure $m$ or not.
Namely, if the free convolutions $\nu_{n}$ converge weakly to $\nu$,
and the c-free convolutions $\mu_{n}$ converge to a measure $\mu$
with $\int_{\mathbb{T}}\zeta\, d\mu(\zeta)=0$, then the measure
$\mu$ must be the Haar measure $m$. \end{remark}


\section{Infinite Divisibility}

\begin{defn}
A pair
$(\mu,\nu)\in\mathcal{M}_{\mathbb{T}}\times\mathcal{M}_{\mathbb{T}}$
is said to be $\boxtimes$-infinitely divisible if for any
$n\in\mathbb{N}$ there exists
$(\mu_{n},\nu_{n})\in\mathcal{M}_{\mathbb{T}}\times\mathcal{M}_{\mathbb{T}}$
such that
\[
 (\mu,\nu)
 =
 \underbrace{(\mu_{n},\nu_{n})
 \boxtimes\cdots\boxtimes(\mu_{n},\nu_{n})}_{n\,\text{times}}.
 \]

\end{defn}
In this section, we give a complete characterization of
$\boxtimes$-infinite divisibility for a pair
$(\mu,\nu)\in\mathcal{M}_{\mathbb{T}}\times\mathcal{M}_{\mathbb{T}}$.
We begin with the case when $\mu$ has zero first moment. We will see
that these are the pairs $(m,\nu)$, where $m$ is the Haar measure on
$\mathbb{T}$ and the measure $\nu$ is $\boxtimes$-infinitely
divisible.

\begin{prop}
\label{prop6.1} Suppose that the pair $(\mu,\nu)$ is
$\boxtimes$-infinitely divisible and that $\int_{\mathbb{T}}\zeta\,
d\mu(\zeta)=0$. Then $\mu=m$.
\end{prop}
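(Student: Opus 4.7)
The strategy is to combine the multiplicativity of $\Sigma_{(\cdot,\cdot)}$ under $\boxtimes$ with the hypothesis $\int_{\mathbb{T}}\zeta\,d\mu(\zeta)=0$ in order to force $\Sigma_{(\mu,\nu)}$ to have a zero of arbitrarily high order at the origin, and hence to vanish identically on a neighborhood of $0$. Lemma \ref{Bnu} will then transfer this vanishing to $B_\mu$, which by the uniqueness observation recalled after that lemma characterizes the Haar measure.

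Concretely, for each $n\in\mathbb{N}$ fix a pair $(\mu_n,\nu_n)$ with $(\mu,\nu)=(\mu_n,\nu_n)^{\boxtimes n}$. Since $\Sigma_{(\mu,\nu)}(z)={}^cT_{(\mu,\nu)}(z/(1-z))$ and ${}^cT$ is multiplicative under $\boxtimes$, one obtains
\[
\Sigma_{(\mu,\nu)}(z)=\Sigma_{(\mu_n,\nu_n)}(z)^n
\]
as analytic functions in some neighborhood of $0$ (depending on $n$). Evaluating at $z=0$ and invoking $\Sigma_{(\mu,\nu)}(0)=B_\mu(0)=\int_{\mathbb{T}}\zeta\,d\mu(\zeta)=0$ noted after Lemma \ref{Bnu}, we get $\Sigma_{(\mu_n,\nu_n)}(0)^n=0$, hence $\Sigma_{(\mu_n,\nu_n)}(0)=0$ for every $n$.

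Consequently $\Sigma_{(\mu_n,\nu_n)}(z)=O(z)$ as $z\to 0$, so $\Sigma_{(\mu,\nu)}(z)=\Sigma_{(\mu_n,\nu_n)}(z)^n=O(z^n)$ near the origin, for every $n\in\mathbb{N}$. Since $\Sigma_{(\mu,\nu)}$ is a single analytic function on a fixed neighborhood of $0$, all its Taylor coefficients at $0$ vanish, and therefore $\Sigma_{(\mu,\nu)}\equiv 0$ near the origin. By Lemma \ref{Bnu}, $\Sigma_{(\mu,\nu)}(z)=B_\mu(\eta_\nu^{\langle-1\rangle}(z))$, and $\eta_\nu^{\langle-1\rangle}$ is a biholomorphism between neighborhoods of $0$ because $\nu\in\mathcal{M}_{\mathbb{T}}^\times$; hence $B_\mu$ vanishes on a neighborhood of $0\in\mathbb{D}$, and therefore on all of $\mathbb{D}$ by analytic continuation. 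The one-to-one correspondence $\mu\leftrightarrow B_\mu$ recalled after Lemma \ref{Bnu} then yields $\mu=m$. There is no substantial obstacle: the only thing worth noticing is that the neighborhood on which the displayed identity holds may shrink with $n$, but since we only exploit the order of vanishing at the single point $z=0$, no uniform disk is required.
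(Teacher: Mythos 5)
Your argument is correct and in fact cleaner than the paper's on the case it covers, but it silently assumes $\nu\in\mathcal{M}_{\mathbb{T}}^{\times}$, and that assumption is not part of the hypotheses. In Section 6 a pair $(\mu,\nu)\in\mathcal{M}_{\mathbb{T}}\times\mathcal{M}_{\mathbb{T}}$ is allowed to have $\int_{\mathbb{T}}\zeta\,d\nu(\zeta)=0$, and in that case the function $\Sigma_{(\mu,\nu)}$ on which your entire argument rests is simply not defined: its construction requires $\eta_{\nu}^{\langle-1\rangle}$ (equivalently $R_{\nu}^{\langle-1\rangle}$), which does not exist when $\eta_{\nu}'(0)=\int_{\mathbb{T}}\zeta\,d\nu(\zeta)=0$. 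You even invoke ``$\eta_\nu^{\langle-1\rangle}$ is a biholomorphism between neighborhoods of $0$ because $\nu\in\mathcal{M}_{\mathbb{T}}^\times$'' without having established that membership. So your proof covers only half of the proposition. The paper treats the remaining case separately: there $\nu$ is $\boxtimes$-infinitely divisible with zero first moment, hence $\nu=m$, and c-freeness with respect to a pair whose second component kills all moments reduces to boolean independence; a direct moment computation then gives $\varphi\bigl((XY)^n\bigr)=\varphi(XY)^n=\bigl(\int_{\mathbb{T}}\zeta\,d\mu(\zeta)\bigr)^n=0$ for all $n$, forcing $\mu=m$. You need to supply this (or some substitute) to close the gap.

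On the case $\int_{\mathbb{T}}\zeta\,d\nu(\zeta)\neq 0$, your route genuinely differs from the paper's and is arguably preferable. The paper rewrites $\Sigma_{(\mu,\nu)}=\bigl(\Sigma_{(\mu_n,\nu_n)}\bigr)^n$ as $B_{\mu}(z)=\bigl(B_{\mu_{n}}(\eta_{\nu_{n}}^{\langle-1\rangle}(\eta_{\nu}(z)))\bigr)^{n}$, cites a result of Belinschi--Bercovici to continue $\eta_{\nu_{n}}^{\langle-1\rangle}\circ\eta_{\nu}$ analytically to all of $\mathbb{D}$, and then uses the dichotomy that a function which is an $n$-th power of an analytic function on $\mathbb{D}$ for every $n$ is either identically zero or zero-free. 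You instead read off the order of vanishing at the single point $z=0$, which needs no global continuation and no external citation; the identity theorem does the rest. (One small point worth making explicit in either approach: $\nu_n$ must lie in $\mathcal{M}_{\mathbb{T}}^{\times}$ so that $\Sigma_{(\mu_n,\nu_n)}$ exists, which follows because the first moment is multiplicative under $\boxtimes$ and $\int_{\mathbb{T}}\zeta\,d\nu(\zeta)\neq0$.)
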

\begin{proof}
We first assume that $\int_{\mathbb{T}}\zeta\, d\nu(\zeta)\neq0$. In
this case, for every $n\in\mathbb{N}$, there exist measures
$\mu_{n}\in\mathcal{M}_{\mathbb{T}}$ and
$\nu_{n}\in\mathcal{M}_{\mathbb{T}}^{\times}$ such that
 \[
(\mu,\nu)
=
\underbrace{(\mu_{n},\nu_{n})
\boxtimes\cdots\boxtimes(\mu_{n},\nu_{n})}_{n\,\text{times}}.
 \]
 Thus we have
 $\Sigma_{(\mu,\nu)}=\left(\Sigma_{(\mu_{n},\nu_{n})}\right)^{n}.$
Or, equivalently, we have
 \[
B_{\mu}(z)=\left(B_{\mu_{n}}\left(\eta_{\nu_{n}}^{\langle
-1\rangle}\left(\eta_{\nu}(z)\right)\right)\right)^{n},\qquad
z\in\mathbb{D},
\]
In \cite{Semigroup}, Proposition 3.3, it is shown that the function\
 $\eta_{\nu_{n}}^{\langle -1\rangle}\left(\eta_{\nu}(z)\right)$
 extends analytically to the entire disk $\mathbb{D}$.
 Therefore the function $B_{\mu}$ is the $n$-th power
of an analytic function in $\mathbb{D}$ for any $n\in\mathbb{N}$.
This happens if and only if either $B_{\mu}$ is identically zero in
$\mathbb{D}$ or $0\notin B_{\mu}(\mathbb{D})$. Hence the measure
$\mu$ is the $\Utime$-infinitely divisible measure with first moment
zero, that is, $\mu=m$.

Suppose now $\int_{\mathbb{T}}\zeta\, d\nu(\zeta)=0$. Then we have
that
\[
(\mu,\nu)=(\mu,m)=(\mu_{1},m)\boxtimes(\mu_{1},m)
\]
for some measure $\mu_{1}\in\mathcal{M}_{\mathbb{T}}$. In this case
we can view the pair $(\mu,m)$ as the distribution of $XY$ in a
noncommutative $C^{*}$-probability space
$(\mathcal{A},\varphi,\psi)$, where $X$ and $Y$ are boolean
independent random variables with the common distribution
$(\mu_{1},m)$. Then, for any $n\in\mathbb{N}$, the $n$-th moment of
the measure $\mu$ is given by
 \[
\varphi(\,\underbrace{(XY)\cdots(XY)}_{n\,\text{times}}\,)
 =\varphi(X)^{n}\varphi(Y)^{n}=\varphi(XY)^{n}=\left(\int_{\mathbb{T}}\zeta\,
d\mu(\zeta)\right)^{n}=0.
 \]
 Therefore the measure $\mu$ is the Haar
measure $m$.
\end{proof}
We focus next on the
 $\boxtimes$-infinitely divisible pair $(\mu,m)$
such that $\mu\in\mathcal{M}_{\mathbb{T}}^{\times}$.

\begin{prop}
\label{prop6.2} The above measure $\mu$ is either a point mass
$\delta_{\lambda}$ for some $\lambda\in\mathbb{T}$ or the harmonic
measure for the unit disc relative to some point
$\alpha\in\mathbb{D}\setminus\{0\}$, i.e., $d\mu=P_{\alpha}\, dm$,
where $P_{\alpha}$ is the Poisson kernel at $\alpha$.
\end{prop}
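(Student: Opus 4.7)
The strategy is to prove that $B_{\mu}$ is a constant function on $\mathbb{D}$. Once this is established, the characterization of measures on $\mathbb{T}$ by their $B$-transform recalled in Section~4 (any analytic $B\colon\mathbb{D}\to\overline{\mathbb{D}}$ comes from a unique probability measure) immediately yields the classification: the constant function $B\equiv c$ is the $B$-transform of the point mass $\delta_{c}$ when $|c|=1$, of the Haar measure $m$ when $c=0$, and of the harmonic measure $P_{c}\,dm$ when $0<|c|<1$. The hypothesis $\mu\in\mathcal{M}_{\mathbb{T}}^{\times}$ excludes $c=0$ and leaves exactly the two cases in the statement.

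Following the pattern of the proof of Proposition~\ref{prop6.1}, I would decompose, for each $n\in\mathbb{N}$,
\[
(\mu,m)=\underbrace{(\mu_{n},m)\boxtimes(\mu_{n},m)\boxtimes\cdots\boxtimes(\mu_{n},m)}_{n\,\text{times}}
\]
for some $\mu_{n}\in\mathcal{M}_{\mathbb{T}}$, and then realize this decomposition as the joint $(\varphi,\psi)$-distribution of a product $X_{1}X_{2}\cdots X_{n}$ in a noncommutative probability space $(\mathcal{A},\varphi,\psi)$, where $X_{1},\ldots,X_{n}$ are c-free with common distribution $(\mu_{n},m)$. Since the $\psi$-distribution of each $X_{j}$ is the Haar measure $m$, which vanishes on the non-unit parts of the subalgebras generated by the $X_{j}$'s, c-freeness with respect to $(\varphi,\psi)$ is equivalent to boolean independence with respect to $\varphi$, as explained in Section~2.2.

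The key computation is then direct: in the expansion of $(X_{1}X_{2}\cdots X_{n})^{k}$ the factors alternate cyclically between the $n$ boolean-independent subalgebras, so
\[
\int_{\mathbb{T}}\zeta^{k}\,d\mu(\zeta)=\varphi\bigl((X_{1}X_{2}\cdots X_{n})^{k}\bigr)=\prod_{j=1}^{n}\varphi(X_{j})^{k}=a_{n}^{nk},
\]
where $a_{n}=\int_{\mathbb{T}}\zeta\,d\mu_{n}(\zeta)$ is the common first moment of the $X_{j}$'s. Taking $k=1$ identifies $a_{n}^{n}=c:=\int_{\mathbb{T}}\zeta\,d\mu\neq 0$, and the higher-moment identity then reads $\int_{\mathbb{T}}\zeta^{k}\,d\mu=c^{k}$ for every $k\geq 1$. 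Hence $m_{\mu}(z)=cz/(1-cz)$ and $B_{\mu}(z)\equiv c$, and the classification above forces $\mu=\delta_{c}$ when $|c|=1$ or $\mu=P_{c}\,dm$ when $0<|c|<1$.

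The main obstacle to watch is the justification that the decomposition may be taken with $\nu_{n}=m$. This is handled exactly as in Proposition~\ref{prop6.1}: any $\nu_{n}$ appearing in an $n$-fold decomposition of $(\mu,m)$ must have zero first moment, since first moments multiply under $\boxtimes$. A direct analysis of the c-free cumulant expansion then shows that, for such a $\nu_{n}$, every non-crossing partition contributing to $\varphi((X_{1}\cdots X_{n})^{k})$ other than the all-singletons partition forces an interior singleton block, whose cumulant $R^{1}$ equals the first moment of $\nu_{n}$ and thus vanishes. This observation both justifies the reduction to the $\nu_{n}=m$ case and simultaneously confirms the boolean-independence computation above.
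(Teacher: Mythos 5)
Your proof is correct and follows essentially the same route as the paper's: both reduce the claim to showing that $B_{\mu}$ is constant, and then compute the moments of $\mu$ as powers of its first moment by exploiting the fact that when the $\psi$-marginal is the Haar measure (so its relevant $\psi$-moments vanish), c-freeness degenerates to boolean independence and $\varphi$ factors over the cyclically alternating product. The only (inessential) difference is that the paper gets by with the two-fold decomposition $(\mu,m)=(\mu_{1},m)\boxtimes(\mu_{1},m)$, whereas you use the $n$-fold one and add an explicit cumulant justification of the boolean factorization.
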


\begin{proof}
By the virtue of (\ref{eq12}), 
 it suffices to show that the function $B_{\mu}$ is a constant function.
Set $c=\int_{\mathbb{T}}\zeta\, d\mu(\zeta)$. Then the same argument
as in the proof of Proposition 6.2 shows that the $n$-th moment of
the measure $\mu$ is $c^{n}$. We conclude that the function
  \[
m_{\mu}(z)=cz+c^{2}z^{2}+\cdots=\frac{cz}{1-cz}
 \]
 in a neighborhood of zero. This fact implies that $B_{\mu}(z)=c$
for all $z\in\mathbb{D}$ as desired.
\end{proof}
It is easy to see that if the measure $\nu$ is
$\boxtimes$-infinitely divisible, a pair of measures of the form
$(m,\nu)$, $(\delta_{\lambda},m)$ or $(P_{\alpha}\, dm,m)$, is also
$\boxtimes$-infinitely divisible.

Finally, we characterize the $\boxtimes$-infinite divisibility in
the class
$\mathcal{M}_{\mathbb{T}}^{\times}\times\mathcal{M}_{\mathbb{T}}^{\times}$.
A family of pairs $\{(\mu_{t},\nu_{t})\}_{t\geq0}$ of probability
measures on $\mathbb{T}$ is a \emph{weakly continuous semigroup}
relative to the convolution $\boxtimes$ if
$(\mu_{t},\nu_{t})\boxtimes(\mu_{s},\nu_{s})=(\mu_{t+s},\nu_{t+s})$
for $t,s\geq0$, and the maps $t\mapsto\mu_{t}$ and $t\mapsto\nu_{t}$
are continuous.

\begin{thm}
\label{thm6.3} Given a measure
$\mu\in\mathcal{M}_{\mathbb{T}}^{\times}$ and a
$\boxtimes$-infinitely divisible measure
$\nu\in\mathcal{M}_{\mathbb{T}}^{\times}$ , the following statements
are equivalent:
\begin{enumerate}
\item The pair $(\mu,\nu)$ is $\boxtimes$-infinitely divisible.
\item There exists a complex number
 $\gamma\in\mathbb{T}$
  and a finite
positive Borel measure $\sigma$ on $\mathbb{T}$ such that
  \[
\Sigma_{(\mu,\nu)}(z)
 =
  \gamma\exp\left(-\int_{\mathbb{T}}\frac{1+\zeta
z}{1-\zeta z}\, d\sigma(\zeta)\right)
  \]
 in a neighborhood of zero.
\item There exists a weakly continuous semigroup
 $\{(\mu_{t},\nu_{t})\}_{t\geq0}$
relative to $\boxtimes$ such that
 $(\mu_{1},\nu_{1})=(\mu,\nu)$
   and
$(\mu_{0},\nu_{0})=(\delta_{1},\delta_{1})$.
\end{enumerate}
Moreover, if statements \textup{(1)} to \textup{(3)} are all
satisfied, then the limit
\[
\gamma=\lim_{t\rightarrow0^{+}}\exp\left(\frac{1}{t}\int_{\mathbb{T}}i\Im\zeta\,
d\mu_{t}(\zeta)\right)
\]
 exists and the measure $\sigma$ is the weak limit of measures
 \[
\frac{1}{t}\,(1-\Re\zeta)\, d\mu_{t}(\zeta)
 \]
 as $t\rightarrow0^{+}$.
\end{thm}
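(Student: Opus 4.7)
The plan is to prove the equivalences cyclically as (1)$\Rightarrow$(2)$\Rightarrow$(3)$\Rightarrow$(1) and then to derive the explicit formulas for $\gamma$ and $\sigma$ in the ``moreover'' clause by specializing Theorem~\ref{thm5.6} to the constant infinitesimal array extracted from the semigroup in (3). The main engine is the identification, built into Theorem~\ref{thm5.6}, of $\Sigma_{(\mu,\nu)}$ with the $B$-transform of a $\Utime$-infinitely divisible measure.

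For (1)$\Rightarrow$(2), pick for each $n$ a root $(\mu_n,\nu_n)$ with $(\mu,\nu)=(\mu_n,\nu_n)^{\boxtimes n}$. Since $\nu\in\mathcal{M}_{\mathbb{T}}^{\times}$ is $\boxtimes$-infinitely divisible, the representation of $\eta_{\nu}^{\langle-1\rangle}$ recalled in Section~4 forces $\nu_n\in\mathcal{M}_{\mathbb{T}}^{\times}$ and $\nu_n\to\delta_1$ weakly. Multiplicativity of $\Sigma$ gives $\Sigma_{(\mu_n,\nu_n)}^{n}=\Sigma_{(\mu,\nu)}$, so $\Sigma_{(\mu_n,\nu_n)}\to 1$ uniformly on a neighborhood of zero. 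Combining this with $\eta_{\nu_n}\to\mathrm{id}$ and the identity $B_{\mu_n}=\Sigma_{(\mu_n,\nu_n)}\circ\eta_{\nu_n}$, I obtain $B_{\mu_n}\to 1$ near zero, and then on all of $\mathbb{D}$ by Montel's theorem, whence $\mu_n\to\delta_1$ by Proposition~\ref{prop5.1}(i). Thus the constant arrays $\mu_{nk}=\mu_n$, $\nu_{nk}=\nu_n$, $k=1,\dots,n$, are infinitesimal, and Theorem~\ref{thm5.6} applied with $\lambda_n=\lambda_n'=1$ to the constant sequence $(\mu,\nu)$ yields
\[
\Sigma_{(\mu,\nu)}(z)=B_{\nu_{\Utimes}^{\gamma,\sigma}}(z)=\gamma\exp\!\left(-\int_{\mathbb{T}}\frac{1+\zeta z}{1-\zeta z}\,d\sigma(\zeta)\right)
\]
in a neighborhood of zero, i.e.\ statement (2).

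For (2)$\Rightarrow$(3), write $\nu=\nu_{\boxtimes}^{\gamma',\sigma'}$ and define, for each $t\geq 0$,
\[
\eta_{\nu_t}^{\langle-1\rangle}(z)=z\cdot(\gamma')^{t}\exp\!\left(t\int_{\mathbb{T}}\frac{1+\zeta z}{1-\zeta z}\,d\sigma'(\zeta)\right), \qquad \Sigma_{(\mu_t,\nu_t)}(z)=\gamma^{t}\exp\!\left(-t\int_{\mathbb{T}}\frac{1+\zeta z}{1-\zeta z}\,d\sigma(\zeta)\right),
\]
the latter being a $\overline{\mathbb{D}}$-valued analytic function on $\eta_{\nu_t}(\mathbb{D})$, hence determining a unique $\mu_t\in\mathcal{M}_{\mathbb{T}}$ via the correspondence recalled after Lemma~\ref{Bnu}. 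The semigroup law is built into the multiplicativity of $T$ and ${}^{c}T$, weak continuity follows from Proposition~\ref{prop5.1} and the evident continuity of the defining formulas in $t$, and one checks $(\mu_0,\nu_0)=(\delta_1,\delta_1)$, $(\mu_1,\nu_1)=(\mu,\nu)$. The implication (3)$\Rightarrow$(1) is immediate via $(\mu_n,\nu_n):=(\mu_{1/n},\nu_{1/n})$. The ``moreover'' clause then follows by applying the uniqueness part of Theorem~\ref{thm5.6} to the constant array $\mu_{nk}=\mu_{1/n}$, $\nu_{nk}=\nu_{1/n}$: the formulas for $\sigma_n$ and $\gamma_n$ in statement (3) of that theorem collapse to $d\sigma_n=\frac{1}{1/n}(1-\Re\zeta)\,d\mu_{1/n}^{\circ}(\zeta)$ and an analogous expression for $\gamma_n$, which by infinitesimality agree asymptotically with $\frac{1}{t}(1-\Re\zeta)\,d\mu_t(\zeta)$ and $\exp(\frac{1}{t}\int i\Im\zeta\,d\mu_t(\zeta))$ as $t=1/n\to 0^{+}$.

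The principal obstacle is the passage in (1)$\Rightarrow$(2) from the local convergence $\Sigma_{(\mu_n,\nu_n)}\to 1$ to the weak convergence $\mu_n\to\delta_1$, which requires both the composition identity $B_{\mu_n}=\Sigma_{(\mu_n,\nu_n)}\circ\eta_{\nu_n}$ and a Montel-type extension argument exactly as in the converse half of Proposition~\ref{prop5.1}(ii). A secondary technicality is that the $b_{nk}$-corrections appearing in the ``moreover'' clause must be shown not to affect the asymptotic form of $\gamma_n$, which uses the infinitesimality of $\mu_t$ as $t\to 0^{+}$ to identify integrals of $\arg\zeta$ over $\{|\arg\zeta|<1\}$ with integrals of $\Im\zeta$ over all of $\mathbb{T}$ up to terms of order $o(t)$.
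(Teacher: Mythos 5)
Your proposal follows essentially the same route as the paper's proof: (1)$\Rightarrow$(2) by extracting $n$-th roots, showing $\mu_n,\nu_n\to\delta_1$ and invoking Theorem~\ref{thm5.6} for the constant infinitesimal arrays; (2)$\Rightarrow$(3) by building $\nu_t$ from the L\'evy--Hin\v{c}in representation and defining $\mu_t$ through $B_{\mu_t}=\bigl(\Sigma_{(\mu,\nu)}\circ\eta_{\nu_t}\bigr)^{t}$ (your explicit formula for $\Sigma_{(\mu_t,\nu_t)}$ is the same object); and the ``moreover'' clause by feeding the semigroup at small times back into Theorem~\ref{thm5.6} and discarding the $b_{nk}$-corrections. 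The only cosmetic difference is that the paper runs the last step along an arbitrary sequence $t_n\to0^{+}$ with $k_n=[1/t_n]$ rather than only $t=1/n$, which is what actually yields the limit as $t\to0^{+}$.
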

\begin{proof}
We first prove that (1) implies (2). Assume that (1) holds. For
every $n\in\mathbb{N}$, we have
 \[
(\mu,\nu)=\underbrace{(\mu_{n},\nu_{n})
\boxtimes\cdots\boxtimes(\mu_{n},\nu_{n})}_{n\,\text{times}}.
 \]
 where
  $\mu_{n}\in\mathcal{M}_{\mathbb{T}}$ and $\nu_{n}\in\mathcal{M}_{\mathbb{T}}^{\times}$.
The $\boxtimes$-infinite divisibility of the measure $\nu$ implies
that there exists an analytic function $u(z)$ in $\mathbb{D}$ such
that the function
 $\eta_{\nu}^{\langle-1\rangle}(z)=z\exp(u(z))$ and $\Re
u(z)\geq0$ for all $z\in\mathbb{D}$ (see \cite{LevyHincin}). It
follows that $\eta_{\nu_{n}}^{\langle-1\rangle}(z)=z\exp(u(z)/n)$,
and hence we deduce that the measures $\nu_{n}$ converge weakly to
$\delta_{1}$. On the other hand, the identity
$(B_{\mu_{n}}(z))^{n}=\Sigma_{(\mu,\nu)}(\eta_{\nu_{n}}(z))$ and
Proposition 5.1 imply that the measures $\mu_{n}$ converge weakly to
$\delta_{1}$ as well. Define two infinitesimal arrays
$\{\mu_{nk}\}_{n,k}$ and $\{\nu_{nk}\}_{n,k}$ by setting
$\mu_{nk}=\mu_{n}$ and $\nu_{nk}=\nu_{n},$ where $1\leq k\leq n$.
Then the measures $(\mu,\nu)$ can be viewed as the weak limit of
c-free convolutions
$(\mu_{n1},\mu_{n1})\boxtimes\cdots\boxtimes(\mu_{nn},,\nu_{nn}).$
Hence (2) follows from Theorem \ref{thm5.6}.

Now, assume that (2) holds. It was also proved in \cite{LevyHincin}
that there exists a weakly continuous semigroup
$\{\nu_{t}\}_{t\geq0}$ relative to $\boxtimes$ so that
$\nu_{0}=\delta_{1}$ and $\nu_{1}=\nu$.
 Note that, for every
$t\geq0$, there exists a unique probability measure $\mu_{t}$ on
$\mathbb{T}$ such that
$B_{\mu_{t}}(z)=\left(\Sigma_{(\mu,\nu)}(\eta_{\nu_{t}}(z))\right)^{t}$
for all $z\in\mathbb{D}$, where $\mu_{0}=\delta_{1}$. Then it is
easy to see that the convolution semigroup
$\{(\mu_{t},\nu_{t})\}_{t\geq0}$ has the desired properties.

The implication from (3) to (1) is obvious. To conclude, we only
need to show the assertions about the measure $\sigma$ and the
number $\gamma$. Assume that the pair $(\mu,\nu)$ is
$\boxtimes$-infinitely divisible, and let
$\{(\mu_{t},\nu_{t})\}_{t\geq0}$ be the corresponding convolution
semigroup as in (3). Let $\{ t_{n}\}_{n=1}^{\infty}$ be a sequence
of positive real numbers such that
$\lim_{n\rightarrow\infty}t_{n}=0$. Let $k_{n}=[1/t_{n}]$ for every
$n\in\mathbb{N}$, where $[x]$ denotes the largest integer that is no
greater than the real number $x$. Observe that \[
1-t_{n}<t_{n}k_{n}\leq1,\qquad n\in\mathbb{N}.\]
 Hence we have $\lim_{n\rightarrow\infty}t_{n}k_{n}=1$, and further
the properties of the semigroup $\{(\mu_{t},\nu_{t})\}_{t\geq0}$
show that the c-free convolutions
 \[
\underbrace{(\mu_{t_{n}},\nu_{t_{n}})
 \boxtimes(\mu_{t_{n}},\nu_{t_{n}})
 \boxtimes\cdots
  \boxtimes(\mu_{t_{n}},\nu_{t_{n}})}_{k_{n}\,\text{times}}
   =(\mu_{t_{n}k_{n}},\nu_{t_{n}k_{n}})
    \]
 converge weakly to
  $(\mu_{1},\nu_{1})=(\mu,\nu)$ as $n\rightarrow\infty$.
Then Theorem \ref{thm5.6} implies that the measures
 \[
\frac{1}{t_{n}}\,(1-\Re\zeta)\,
d\mu_{t_{n}}^{\circ}(\zeta)=\frac{1}{t_{n}k_{n}}\,
k_{n}(1-\Re\zeta)\, d\mu_{t_{n}}^{\circ}(\zeta)
 \]
 converge weakly to the measure $\sigma$ and
  \[
\gamma
 =
 \lim_{n\rightarrow\infty}\exp\left(\frac{1}{t_{n}}\int_{\mathbb{T}}i\Im\zeta\,
d\mu_{t_{n}}^{\circ}(\zeta)\right),
 \]
 where the centered measures $d\mu_{t_{n}}^{\circ}(\zeta)=d\mu_{t_{n}}(b_{n}\zeta)$
and the numbers $b_{n}$ are defined as in (10). The desired result
follows immediately from the fact that
$\lim_{n\rightarrow\infty}b_{n}=1$, and that the topology on the set
$\mathcal{M}_{\mathbb{T}}$ determined by the weak convergence of
measures is actually metrizable.
\end{proof}
We conclude this section by showing a c-free analogue of
Hin\v{c}in's classical theorem on the infinite divisibility of real
random variables \cite{Hincin}.

\begin{cor}
\label{cor6.4} Let $\{\lambda_{n}\}_{n=1}^{\infty}$ and
$\{\lambda_{n}^{\prime}\}_{n=1}^{\infty}$ be two sequences in
$\mathbb{T}$, and let $\{\mu_{nk}\}_{n,k}$ and $\{\nu_{nk}\}_{n,k}$
be two infinitesimal arrays in $\mathcal{M}_{\mathbb{T}}^{\times}$.
Suppose that the sequence
 \[
  \left\{
(\delta_{\lambda_{n}},\delta_{\lambda_{n}^{\prime}})
 \boxtimes(\mu_{n1},\nu_{n1})\boxtimes(\mu_{n2},\nu_{n2})
 \boxtimes\cdots\boxtimes(\mu_{nk_{n}},\nu_{nk_{n})}
 \right\}_{n}
 \]
 converges weakly to $(\mu,\nu).$ Then the pair $(\mu,\nu)$ is $\boxtimes$-infinitely
divisible.
\end{cor}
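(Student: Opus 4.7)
The plan is to reduce Corollary \ref{cor6.4} to Theorem \ref{thm5.6} and Theorem \ref{thm6.3}, after first identifying the limiting measure $\nu$ via the classical free Hin\v{c}in theorem.

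First I would identify $\nu$. Since $\{\nu_{n}\}$ is the sequence of free multiplicative convolutions of the infinitesimal array $\{\nu_{nk}\}\subset\mathcal{M}_{\mathbb{T}}^{\times}$ with Dirac shifts $\delta_{\lambda_{n}^{\prime}}$, the classical Hin\v{c}in-type theorem for multiplicative free convolution \cite{MulHincin} guarantees that the weak limit $\nu$ is $\boxtimes$-infinitely divisible. Hence either $\nu=m$, or $\nu\in\mathcal{M}_{\mathbb{T}}^{\times}$ takes the form $\nu_{\boxtimes}^{\gamma^{\prime},\sigma^{\prime}}$ for some $\gamma^{\prime}\in\mathbb{T}$ and a finite positive Borel measure $\sigma^{\prime}$ on $\mathbb{T}$.

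In the generic situation $\mu\in\mathcal{M}_{\mathbb{T}}^{\times}$ and $\nu\in\mathcal{M}_{\mathbb{T}}^{\times}$, I would apply Theorem \ref{thm5.6} directly: the hypotheses (weak convergence of $\{(\mu_{n},\nu_{n})\}$ together with $\nu=\nu_{\boxtimes}^{\gamma^{\prime},\sigma^{\prime}}$) imply that the corresponding boolean convolutions $\rho_{n}=\delta_{\lambda_{n}}\utimes\mu_{n1}\utimes\cdots\utimes\mu_{nk_{n}}$ converge weakly to some $\nu_{\Utimes}^{\gamma,\sigma}$, and moreover
\[
\Sigma_{(\mu,\nu)}(z)=B_{\nu_{\Utimes}^{\gamma,\sigma}}(z)=\gamma\exp\left(-\int_{\mathbb{T}}\frac{1+\zeta z}{1-\zeta z}\,d\sigma(\zeta)\right)
\]
in a neighborhood of zero. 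This is exactly condition (2) of Theorem \ref{thm6.3}, so $(\mu,\nu)$ is $\boxtimes$-infinitely divisible.

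The two degenerate cases can be disposed of using material already in the paper. If $\int_{\mathbb{T}}\zeta\,d\mu(\zeta)=0$, then Remark \ref{rem5.8} forces $\mu=m$, and the remark following Proposition \ref{prop6.2} tells us $(m,\nu)$ is $\boxtimes$-infinitely divisible whenever $\nu$ is. If instead $\nu=m$, then $\eta_{\nu}\equiv 0$, and the identity $B_{\mu_{n}}(z)=\Sigma_{(\mu_{n},\nu_{n})}(\eta_{\nu_{n}}(z))$ combined with Theorem \ref{thm55} forces $B_{\mu}$ to be constant; Proposition \ref{prop6.2} then identifies $\mu$ either as a point mass $\delta_{\lambda}$ or as a harmonic measure $P_{\alpha}\,dm$, and each such pair with $m$ is already known to be $\boxtimes$-infinitely divisible. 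The main obstacle is not any single computation but rather the bookkeeping required to separate these degenerate cases from the generic one, since the cleanest tool (Theorem \ref{thm5.6}) applies only when both marginals lie in $\mathcal{M}_{\mathbb{T}}^{\times}$.
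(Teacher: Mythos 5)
Your proposal is correct and follows essentially the same route as the paper: $\nu$ is $\boxtimes$-infinitely divisible by the free Hin\v{c}in theorem, the generic case is Theorems \ref{thm5.6} and \ref{thm6.3}, the zero-first-moment case is Remark \ref{rem5.8}, and the case $\nu=m$ is handled by showing $B_{\mu}$ is constant via $B_{\mu_{n}}(z)=\Sigma_{(\mu_{n},\nu_{n})}(\eta_{\nu_{n}}(z))$. The only imprecision is in that last case: the constancy of $B_{\mu}$ does not come from Theorem \ref{thm55} but from the uniform Lipschitz bound on the functions $\Sigma_{(\mu_{n},\nu_{n})}$ near zero together with $\eta_{\nu_{n}}\rightarrow 0$, which gives $\left|B_{\mu_{n}}(z)-B_{\mu_{n}}(0)\right|\leq K\left|\eta_{\nu_{n}}(z)\right|\rightarrow 0$.
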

\begin{proof}
Note first that the measure $\nu$ is $\boxtimes$-infinitely
divisible as we have seen earlier. The case
$\mu,\nu\in\mathcal{M}_{\mathbb{T}}^{\times}$ is an application of
Theorems \ref{thm5.6} and \ref{thm6.3}. Remark \ref{rem5.8} shows
that $\mu=m$ when $\int_{\mathbb{T}}\zeta\, d\mu(\zeta)=0$. Only the
case $\mu\in\mathcal{M}_{\mathbb{T}}^{\times}$ and $\nu=m$ requires
a proof. In this case we set
 \[
(\mu_{n},\nu_{n})
 =
  (\delta_{\lambda_{n}},\delta_{\lambda_{n}^{\prime}})
   \boxtimes(\mu_{n1},\nu_{n1})\boxtimes(\mu_{n2},\nu_{n2})
   \boxtimes\cdots\boxtimes(\mu_{nk_{n}},\nu_{nk_{n}})
   \]
 Observe that $B_{\mu_{n}}(z)=\Sigma_{(\mu_{n},\nu_{n})}(\eta_{\nu_{n}}(z))$,
and that the family
$\{\Sigma_{(\mu_{n},\nu_{n})}(z)\}_{n=1}^{\infty}$ is uniformly
Lipschitz in a neighborhood of zero $\mathcal{D}$ with a Lipschitz
constant $K>0$. We have
 \begin{eqnarray*}
\left|B_{\mu_{n}}(z)-\int_{\mathbb{T}}\zeta\, d\mu(\zeta)\right|
  & \leq &
  \left|B_{\mu_{n}}(z)-B_{\mu_{n}}(0)\right|+\left|B_{\mu_{n}}(0)-B_{\mu}(0)\right|\\
 & \leq &
  K\left|\eta_{\nu_{n}}(z)\right|+\left|B_{\mu_{n}}(0)-B_{\mu}(0)\right|.
  \end{eqnarray*}
 It follows that the functions $B_{\mu_{n}}$ converge uniformly on
compact subsets of $\mathbb{D}$ to the constant function
$\int_{\mathbb{T}}\zeta\, d\mu(\zeta)$, and hence the function
$B_{\mu}$ is the same constant function. The measure $\mu$ in this
case is either a point mass concentrated at a point on $\mathbb{T}$
or the harmonic measure for $\mathbb{D}$ relative to a point in
$\mathbb{D}\setminus\{0\}$. Therefore $(\mu,m)$ is
$\boxtimes$-infinitely divisible.
\end{proof}


\section{Appendix:Non-crossing linked partitions
and a formula for the coefficients of the $T$- and $\cT$-transforms}

\subsection{Non-crossing linked partitions}

The notion of non-crossing linked partitions, that we will largely
use in Section 4, is a generalization of non-crossing partitions. It
was first discussed in \cite{dykema}, in connection to the
$T$-transform.

 By a non-crossing linked partition $\gamma$ of the ordered set
 $\{1,2,\dots,n\}$ we will understand a collection $B_1,\dots, B_k$
 of subsets of $\{1,2,\dots,n\}$, called blocks, with the following
 properties:
 \begin{enumerate}
 \item[(1)]$\displaystyle{\bigcup_{l=1}^kB_l=\{1,\dots,n\}}$
 \item[(2)]$B_1,\dots,B_k$ are non-crossing, in the sense that there
 are no two blocks $B_l, B_s$ and $i<k<p<q$ such that $i,p\in B_l$
 and $k,q\in B_s$.
 \item[(3)] for any $1\leq l,s\leq k$, the intersection $B_l\bigcap
 B_s$ is either void or contains only one element. If
 $\{j\}=B_i\bigcap B_s$, then $|B_s|, |B_l|\geq 2$ and $j$ is the
 minimal element of only one of the blocks $B_l$ and $B_s$.
\end{enumerate}

An element will be said to be \emph{singly}, respectively
\emph{doubly} covered by $\gamma$ if it is contained in exactly one,
respectively exactly two blocks of $\gamma$. The set of all
non-crossing linked partitions on $\{1,\dots, n\}$ will be denoted
by $NCL(n)$. If $\gamma\in NCL(n)$ and $B=i_1<i_2<\dots <i_p$ is a
block of $\gamma$, the element $i_1$ will be denoted $min(B)$. The
block $B$ will be called \emph{exterior} if there is no other block
$D$ of $\gamma$ containing two elements $l,s$ such that $l\leq
i_1<i_p<s$. The set of all exterior blocks of $\gamma$ will be
denoted by $ext(\gamma)$; the set of all  blocks of $\gamma$ which
are not exterior will be denoted by $int(\gamma)$. We will use the
notation $NCL_1(n)$ for the set of all elements in $NCL(n)$ with
only one exterior block.

Example: Below is represented graphically the non-crossing linked
partition\\
 $\gamma=(1,4,6,9), (2,3), (4,5),(6,7,8),(10,11),
(11,12)$ $\in NCL(12)$:

 \setlength{\unitlength}{.17cm}
 \begin{equation*}
 \begin{picture}(10,8)

\put(-16,3){\circle*{1}} \put(-16,3){\line(0,1){5}}

\put(-12,3){\circle*{1}} \put(-12,3){\line(0,1){3}}

\put(-8,3){\circle*{1}}\put(-8,3){\line(0,1){3}}

\put(-4,3){\circle*{1}}\put(-4,3){\line(0,1){5}}

\put(0,3){\circle*{1}}\put(0,3){\line(0,1){3}}

\put(4,3){\circle*{1}}\put(4,3){\line(0,1){5}}

\put(8,3){\circle*{1}}\put(8,3){\line(0,1){3}}

\put(12,3){\circle*{1}}\put(12,3){\line(0,1){3}}

\put(16,3){\circle*{1}}\put(16,3){\line(0,1){5}}

\put(20,3){\circle*{1}}\put(20,3){\line(0,1){5}}

\put(24,3){\circle*{1}}\put(24,3){\line(0,1){5}}

\put(28,3){\circle*{1}}\put(28,3){\line(0,1){3}}

\put(-16,8){\line(1,0){32}}

\put(20,8){\line(1,0){4}}

\put(-12,6){\line(1,0){4}}

\put(8,6){\line(1,0){4}}

\put(-4,3){\line(4,3){4}}

\put(4,3){\line(4,3){4}}

\put(24,3){\line(4,3){4}}

 \end{picture}
 \end{equation*}
One has that $ext(\gamma)=(1,4,6,9),(10,11)$ and $int(\gamma)=(2,3),
(4,5),(6,7,8),(11,12)$.

Note that $NC(n)\subset NCL(n)$ and, for a partition in $NC(n)$,
hence also in $NCL(n)$, blocks that are exterior, respectively
interior, in $NC(n)$ are also in $NCL(n)$.

 If $\pi\in NCL(p)$ and $\sigma\in NCL(s)$, then $\pi\oplus\sigma$ is
 the partition from $NCL(p+s)$ obtained by the juxtaposition of
 $\pi$ and $\sigma$.

 Finally, if $\gamma\in NCL(n)$ and $A$ is a (ordered) subset of $\{1,2,\dots,
 n\}$ with $m$ elements, then by $\gamma_{|A}$ we will understand
 the partition in $NCL(m)$ obtained by intersecting the blocks of
 $\gamma$ with $A$, then shifting the elements of the ordered set
 $A$ into $\{1,2,\dots, m\}$.

 \subsection{A formula for the coefficients of the $T$- and
 $\cT$-transforms}${}$\\

 Let $X\in\gA$ and $\displaystyle{m(z)=\sum_{n=1}^\infty m_nz^n}$,
 $\displaystyle{M(z)=\sum_{n=1}^\infty M_nz^n}$ be the moment generating series of $X$ with
 respect to $\psi$, respectively $\varphi$, while
 $\displaystyle{T(z)=\sum_{n=1}^{\infty}t_n z^n}$ and  $\displaystyle{\cT(z)=\sum_{n=1}^{\infty}{}^ct_n
 z^n}$ be the $T$-, respectively the
 $\cT$-transform of $X$.

 \begin{lemma}\label{nclprop}With the above notations, one has that:
\begin{enumerate}
\item[(i)]$\displaystyle{\left[T\circ\left(m(z)\right)\right](1+m(z))=\frac{m(z)}{z}}$
\item[(ii)]$\displaystyle{\left[\cT\circ\left(m(z)\right)\right](1+M(z))=\frac{M(z)}{z}}$
\end{enumerate}
\end{lemma}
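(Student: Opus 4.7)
The plan is to unpack the definitions of $T$ and $\cT$ so that composition with $m(z)$ becomes a direct substitution, then invoke the defining relations (\ref{defRx}) and (\ref{defcRx}) to eliminate the auxiliary series $R$ and $\cR$. The essential observation, coming from Lemma \ref{cumulantvanish}(ii) and its c-free analogue in the corollary to Theorem \ref{mainrez}, is that
\[
T(u)=\frac{R\bigl(R^{\langle -1\rangle}(u)\bigr)}{R^{\langle -1\rangle}(u)}=\frac{u}{R^{\langle -1\rangle}(u)},\qquad \cT(u)=\frac{\cR\bigl(R^{\langle -1\rangle}(u)\bigr)}{R^{\langle -1\rangle}(u)},
\]
because $R\circ R^{\langle -1\rangle}=\mathrm{id}$. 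This rewriting turns both claims into routine verifications.

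For part (i), I would first read the defining equation $R(z[1+m(z)])=m(z)$ as saying $R^{\langle -1\rangle}(m(z))=z(1+m(z))$. Substituting $u=m(z)$ into the formula for $T$ then gives
\[
T(m(z))=\frac{m(z)}{R^{\langle -1\rangle}(m(z))}=\frac{m(z)}{z(1+m(z))},
\]
and multiplying by $(1+m(z))$ yields exactly $m(z)/z$, proving (i).

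For part (ii), I would repeat the substitution $u=m(z)$ in the formula for $\cT$, obtaining
\[
\cT(m(z))=\frac{\cR\bigl(R^{\langle -1\rangle}(m(z))\bigr)}{R^{\langle -1\rangle}(m(z))}=\frac{\cR\bigl(z(1+m(z))\bigr)}{z(1+m(z))}.
\]
Next I would apply the defining relation (\ref{defcRx}) in the form $\cR(z[1+m(z)])=M(z)(1+m(z))/(1+M(z))$ and substitute it into the previous display; the factor $(1+m(z))$ cancels, leaving $\cT(m(z))=M(z)/\bigl(z(1+M(z))\bigr)$. Multiplying by $(1+M(z))$ gives (ii).

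There is no genuine obstacle here: once the identity $T(u)=u/R^{\langle -1\rangle}(u)$ (and its c-free analogue) is extracted from the definitions, both parts reduce to one-line manipulations of the power series relations defining $R$ and $\cR$. The only thing to check carefully is that all compositions make sense as formal power series, which is automatic since $m(0)=0$ and $R^{\langle -1\rangle}$ is a well-defined formal inverse under the standing assumption $m_1=\psi(X)\neq 0$ used throughout the section.
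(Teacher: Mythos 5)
Your proof is correct and follows essentially the same route as the paper: both arguments rest on the identity $R^{\langle-1\rangle}(m(z))=z(1+m(z))$ extracted from (\ref{defRx}) and then feed it into the definitions of $T$ and $\cT$ together with (\ref{defRx}) and (\ref{defcRx}). Your rewriting $T(u)=u/R^{\langle-1\rangle}(u)$ is just a slightly more streamlined packaging of the paper's composition $\left(\frac{1}{z}R(z)\right)\circ\left[(1+z)m^{\langle-1\rangle}(z)\right]\circ m(z)$, not a different argument.
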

\begin{proof}
(i): By definition,
 $\displaystyle{
 T(z)=\left[\frac{1}{z}R(z)\right]\circ\left[R^{\langle-1\rangle}(z)\right].
 }$

 Since $m(z)=R(z[1+m(z)])$, composing at left with
 $R^{\langle-1\rangle}$and at right with $m^{\langle-1\rangle}$, we
 get
  \[
  R^{\langle-1\rangle}(z)=m^{\langle-1\rangle}(z)(1+z),
  \]
  therefore
  \[
  T(z)=\left[\frac{1}{z}R(z)\right]\circ\left[(1+z)m^{\langle-1\rangle}(z)
  \right].
  \]

  It suffices then to show that
  \[
\left[\frac{1}{z}R(z)\right]\circ\left[(1+z)m^{\langle-1\rangle}(z)
  \right]\circ m(z)=\frac{m(z)}{z[1+m(z)]}.
  \]
  But, from (\ref{defRx}),
  \[
  \frac{m(z)}{z[1+m(z)]}=\left[\frac{1}{z}R(z)\right]\circ\left(z[1+m(z)]\right)
  \]
  and, since,
  \[
  \left[(1+z)m^{\langle-1\rangle}(z)\right]\circ m(z)=z[1+m(z)]
  \]
  we have q.e.d..

  (ii): Analogously, it suffices to show that
  \[
  \left[\frac{1}{z}\cR(z)\right]\circ\left[(1+z)m^{\langle-1\rangle}(z)
  \right]\circ m(z)=\frac{M(z)}{z[1+M(z)]}.
  \]

  (\ref{defcRx}) implies
  \[
  \frac{M(z)}{z[1+M(z)]}=\left(\frac{1}{z}\cR(z)\right)\circ\left(z[1+m(z)]\right)
  \]
  and, since,
  \[
  \left[(1+z)m^{\langle-1\rangle}(z)\right]\circ m(z)=z[1+m(z)]
  \]
  we have again q.e.d..
\end{proof}

\begin{conseq}\label{nclcconseq}
 Let us denote, to ease the writing, $m_0=M_0=1$. Then:
\begin{enumerate}
\item[(i)] $\displaystyle{
 m_n=\sum_{k=1}^{n-1}
 \sum_{\substack{p_1+\cdots+p_k\leq n-1\\ p_1,\dots,p_k\geq 0}}
 \left(t_km_{p_1}\cdots m_{p_k}\right)m_{n-1-(p_1+\dots+p_k)}
  }$
\item[(ii)] $\displaystyle{
 M_n=\sum_{k=1}^{n-1}
 \sum_{\substack{p_1+\cdots+p_k\leq n-1\\ p_1,\dots,p_k\geq 0}}
 \left(t_km_{p_1}\cdots M_{p_k}\right)m_{n-1-(p_1+\dots+p_k)}
  }$
  \end{enumerate}
\end{conseq}

\begin{thm}\label{nclthm} With the above notations, one has that:
\begin{enumerate}
\item[(i)]$\displaystyle{m_n=\sum_{\gamma\in NCL(n)}t_0^{n-|\gamma|}
 \cdot\prod_{B\in\gamma}t_{|B|-1}}$
\item[(ii)]$\displaystyle{M_n=\sum_{\gamma\in NCL(n)}t_0^{n-|\gamma|}
 \cdot\prod_{B\in ext(\gamma)}{}^ct_{|B|-1}
 \cdot\prod_{B\in int(\gamma)}t_{|B|-1}}.$
\end{enumerate}
\end{thm}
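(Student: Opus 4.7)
The plan is to prove \textup{(i)} and \textup{(ii)} simultaneously by induction on $n$, using recursive identities obtained by expanding Lemma~\ref{nclprop}. Writing out $\left[T\circ m(z)\right](1+m(z))=m(z)/z$ in coefficients and using the conventions $m_0=M_0=1$, one obtains
\[
m_n = \sum_{k\ge 0} t_k \sum_{\substack{p_1+\cdots+p_k+q=n-1\\ p_i\ge 1,\, q\ge 0}} m_{p_1}\cdots m_{p_k}\, m_q,
\]
and analogously for $M_n$ with $t_k$ replaced by ${}^ct_k$ and the final factor $m_q$ replaced by $M_q$. The base case $n=1$ is immediate since $NCL(1)=\{\{\{1\}\}\}$, yielding weight $t_0=m_1$ for (i) and ${}^ct_0=M_1$ for (ii).

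For the inductive step I decompose each $\gamma\in NCL(n)$ at its smallest \emph{break point} $r\in\{1,\ldots,n\}$: the least $r$ such that no block of $\gamma$ contains elements in both $\{1,\ldots,r\}$ and $\{r+1,\ldots,n\}$. This splits $\gamma$ into a "connected" prefix $\gamma|_{\{1,\ldots,r\}}$ (with no internal break) and an arbitrary suffix $\gamma|_{\{r+1,\ldots,n\}}\in NCL(n-r)$; the latter contributes the factor $m_q$ (respectively $M_q$) with $q=n-r$ by the inductive hypothesis. Within the prefix I follow the chain of linked blocks from the block containing $1$ out to the rightmost element $r$, and encode it by a sequence of backbone endpoints $1=j_1<j_2<\cdots<j_{k+1}=r$, contributing $t_k$ (or ${}^ct_k$ for (ii), since the block containing $1$ is always exterior) together with, for each $i=1,\ldots,k$, an NCL structure on the gap $H_i=\{j_i+1,\ldots,j_{i+1}\}$ of size $p_i=j_{i+1}-j_i$, which contributes $m_{p_i}$ by induction. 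The crucial combinatorial observation is that a non-singleton block at the leftmost position of an inner NCL on $H_i$ is exactly the next link of the chain — its minimum is the shared element $j_{i+1}$ and its remaining elements lie in $H_i\setminus\{j_{i+1}\}$ — while a singleton there represents an un-linked backbone endpoint; both possibilities are automatically absorbed into the single factor $m_{p_i}$.

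The main obstacle is verifying the weight bookkeeping. The prefactor $t_0^{n-|\gamma|}$ emerges through the nested decompositions: every "phantom singleton" appearing at the leftmost position of some inner NCL corresponds to a backbone endpoint that is not actually linked in $\gamma$, and each such phantom contributes exactly one factor of $t_0$. For part (ii) I must also check the exterior/interior classification: the first backbone block (containing $1$) is necessarily exterior, while every other backbone block has its minimum shared with the preceding block of the chain — which extends strictly to its left — so it is overlapped from the left and lies in $int(\gamma)$, correctly contributing $t_{|B|-1}$ through the inner $m_{p_i}$ factor; the suffix's own exterior/interior structure is captured recursively by $M_q$. Aggregating over all valid choices of $r$, $k$, the composition $(p_1,\ldots,p_k)$ of $r-1$, and the inner NCL weights then reproduces the displayed recursions verbatim, closing the induction.
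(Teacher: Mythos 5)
Your overall strategy coincides with the paper's: derive the convolution-type recursions for $m_n$ and $M_n$ from Lemma \ref{nclprop} (your version of the recursion is in fact the correct one, with $p_i\geq 1$ and the $k=0$ term included), then induct by splitting each $\gamma\in NCL(n)$ into the connected component of $1$ and an arbitrary suffix, and decompose the connected component around its unique exterior block, with the singly/doubly covered dichotomy producing the phantom singletons that account for $t_0^{n-|\gamma|}$. The base case, the role of the suffix ($m_q$ versus $M_q$), and the exterior/interior classification for part (ii) are all handled as in the paper.

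However, the central combinatorial step --- the explicit bijection between connected prefixes and the data $\bigl(k,(p_1,\dots,p_k),\text{inner NCLs}\bigr)$ --- is mis-specified as written, and the ``crucial combinatorial observation'' is false in the form you state it. With $H_i=\{j_i+1,\dots,j_{i+1}\}$, the element $j_{i+1}$ is the \emph{rightmost} point of $H_i$, so a block of the inner NCL with minimum $j_{i+1}$ cannot have ``its remaining elements in $H_i\setminus\{j_{i+1}\}$'': those elements would lie below its minimum. The gaps must instead be anchored on the \emph{left} by the non-initial elements $i_2<\dots<i_{k+1}$ of the exterior block $F$: the gap attached to $i_l$ is $\{i_l,i_l+1,\dots,i_{l+1}-1\}$, and there $i_l$ is either a phantom singleton or the minimum of a linked block extending to its right. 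Two further consequences of your convention need repair: (a) $j_{k+1}=r$ forces the last element of $F$ to be the right end of the connected component, which fails whenever a linked block hangs off $\max F$ (e.g.\ $(1,2),(2,3)$, where $F=(1,2)$ but $r=3$); the last gap must be $\{i_{k+1},\dots,r\}$. (b) The segment $\{2,\dots,i_2-1\}$ nested under the first arc of $F$ is covered by none of your $H_i$; in the paper it is adjoined to the last gap (the partition $\sigma(k)=\sigma'(k)\oplus\sigma_{|\{2,\dots,i_2-1\}}$) so that the gap sizes sum to $r-1$. Once the gaps are re-anchored this way, your argument closes exactly as the paper's does, but as written the decomposition does not define a bijection and the inductive step does not go through.
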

\begin{proof}
Part (i) of the result is also shown in \cite{dykema}. The proof
presented below is shorter and employs a different approach.

 First, for $\gamma\in NCL(n)$, let us denote
 \begin{eqnarray*}
 \mathcal{E}(\gamma)
 &=&
 t_0^{n-|\gamma|}\prod_{B\in\gamma}t_{|B|-1}\\
 {}^c\mathcal{E}(\gamma)
 &=&
 t_0^{n-|\gamma|}\prod_{B\in ext(\gamma)}{}^ct_{|B|-1}
 \prod_{B\in int(\gamma)}t_{|B|-1}
 \end{eqnarray*}
 Note that if $\gamma=\gamma_1\oplus\gamma_2$, then
 $\mathcal{E}(\gamma)=\mathcal{E}(\gamma_1)\mathcal{E}(\gamma_2)$
 and
 ${}^c\mathcal{E}(\gamma)={}^c\mathcal{E}(\gamma_1){}^c\mathcal{E}(\gamma_2)$.

 Fix $\gamma\in NCL(n)$ and let $F=1<i_2<\dots<i_k$ be the first
 block of $\gamma$. Particularly, $F\in ext(\gamma)$. Let
 $\overline{F}$ be the smallest subset of $\{1,2,\dots,n\}$ with the
 following properties:
 \begin{enumerate}
 \item[(i)]$F\subset \overline{F}$
 \item[(ii)]if $j\in \overline{F}$, then $\{1,2,\dots,
 j\}\subset\overline{F}$.
 \item[(iii)] if $j\in F$ and $j\in D\in\gamma$, then $D\in
 \overline{F}$.
 \end{enumerate}

 Since
  $\gamma=\gamma_{|\overline{F}}\oplus\gamma_{|\{1,\dots,n\}\setminus \overline{F}}$
   , it follows that
   \[
    NCL(n)=\bigsqcup_{k=1}^{n} NCL_1(k)\oplus NCL(n-k),
   \]
   hence
   \[
   \sum_{\gamma\in NCL(n)} \mathcal{E}(\gamma)
    =\sum_{k=1}^n\sum_{\sigma\in NCL_1(k)}\mathcal{E}(\sigma)\sum_{\pi\in
    NCL(n-k)}\mathcal{E}(\pi).
   \]
   Applying the induction hypothesis, the above equality becomes
   \begin{equation}\label{ncleq1}
 \sum_{\gamma\in NCL(n)} \mathcal{E}(\gamma)
    =\sum_{k=1}^n\sum_{\sigma\in NCL_1(k)}\mathcal{E}(\sigma)m_{n-k}.
   \end{equation}

   Say now $\sigma\in NCL_1(p)$ and $F=1<i_2<\dots<i_k$ is the
   exterior block of $\sigma$. For $2\leq l\leq k-1$ define
   \[
   \sigma(l)=\left\{
   \begin{array}{clcr}
   \sigma_{|\{i_l,i_l+1,\dots,i_{l+1}-1\}}
   &
   \text{if $i_l$ is singly covered}\\
   \sigma_{|\{i_l,i_l+1,\dots,i_{l+1}-1\}}\setminus (i_l)
   &
   \text{if $i_l$ is doubly covered}\\
   \end{array}
   \right.
   \]
   also let
   \[
\sigma^\prime(k)=\left\{
   \begin{array}{clcr}
   \sigma_{|\{i_k,i_k+1,\dots,p\}}
   &
   \text{if $i_k$ is singly covered}\\
   \sigma_{|\{i_k,i_k+1,\dots,p\}}\setminus (i_k)
   &
   \text{if $i_k$ is doubly covered}\\
   \end{array}
   \right.
   \]
   and
   \[
   \sigma(k)=\sigma^\prime(k)
   \oplus\sigma_{|\{2,3,\dots,i_2-1\}}
   \]

   Example: The partition $\sigma=(1,4,5,9), (2,3),
   (5,6,7),(8),(9,10)$ from $NCL_1(10)$ is represented graphically
   below:

    \setlength{\unitlength}{.16cm}
 \begin{equation*}
 \begin{picture}(10,8)

\put(-16,3){\circle*{1}} \put(-16,3){\line(0,1){5}}

\put(-12,3){\circle*{1}} \put(-12,3){\line(0,1){3}}

\put(-8,3){\circle*{1}}\put(-8,3){\line(0,1){3}}

\put(-4,3){\circle*{1}}\put(-4,3){\line(0,1){5}}

\put(0,3){\circle*{1}}\put(0,3){\line(0,1){5}}

\put(4,3){\circle*{1}}\put(4,3){\line(0,1){3}}

\put(8,3){\circle*{1}}\put(8,3){\line(0,1){3}}

\put(12,3){\circle*{1}}\put(12,3){\line(0,1){3}}

\put(16,3){\circle*{1}}\put(16,3){\line(0,1){5}}

\put(20,3){\circle*{1}}\put(20,3){\line(0,1){3}}

\put(-16,8){\line(1,0){32}}

\put(-12,6){\line(1,0){4}}

\put(8,6){\line(-1,0){4}}

\put(0,3){\line(4,3){4}}

\put(16,3){\line(4,3){4}}

 \end{picture}
 \end{equation*}
 and we have that
 \begin{eqnarray*}
 \sigma(2)&=&(1)\\
 \sigma(3)&=&(1,2,3)(4)\\
 \sigma(4)&=&(1,2)(3,4)
 \end{eqnarray*}

 Note that, since $\sigma\in NCL_1(p)$, we have that
 $\sigma^\prime(k)\in NCL(p-k+1)$. Furthermore, the knowledge of
 $\sigma(2),\dots,\sigma(k)$ determines $\sigma$ uniquely and
 \[
 {}^c\mathcal{E}(\sigma)={}^ct_{k-1}\prod_{l=2}^k\mathcal{E}\left(\sigma(l)\right).
 \]
Combining the above result with (\ref{ncleq1}) and the induction
hypothesis, we get
\begin{eqnarray*}
\sum_{\gamma\in NCL(n)} {}^c\mathcal{E}(\gamma)
    &=&
    \sum_{k=1}^n M_{n-k}
    \sum_{\sigma\in
    NCL_1(k)}{}^c\mathcal{E}(\sigma)\\
    &=&
    \sum_{k=1}^{n-1}M_{n-1-k}
    \sum_{
    \substack{p_1+\dots+p_k\leq n-1\\
    p_1,\dots,p_k\geq 1}
    }
    \sum_{
    \substack{\gamma_l\in NCL(p_l)\\
    1\leq l\leq k}
    }
    {}^ct_k
    \prod _{l=1}^k\mathcal(\gamma_k)\\
    &=&
    \sum_{k=1}^{n-1}M_{n-1-k}
    \sum_{
    \substack{p_1+\dots+p_k\leq n-1\\
    p_1,\dots,p_k\geq 1}
    }
    {}^ct_k
    \prod _{l=1}^k
    \left(
    \sum_{\gamma\in NCL(p_k)}
    \mathcal{E}(\gamma)
    \right)\\
    &=&
    \sum_{k=1}^{n-1}M_{n-1-k}
    \sum_{
    \substack{p_1+\dots+p_k\leq n-1\\
    p_1,\dots,p_k\geq 1}
    }
    {}^ct_k
    m_{p_1}\cdots m_{p_k}\\
    &=&
    M_n
\end{eqnarray*}

The inductive step for $m_n$ is analogous (for $\varphi=\psi$, the
sequences $\{m_n\}_n$ and $\{M_n\}_n$ coincide).
\end{proof}

\textbf{Acknowledgements.} We thank \c{S}erban Belinschi and Hari
Bercovici for their constant support and his many advices during the
work on this paper.

\end{document}